\theoremstyle{plain}
\newtheorem{theorem}{Theorem}[section]
\newtheorem{lemma}[theorem]{Lemma}
\newtheorem{proposition}[theorem]{Proposition}
\newtheorem{Definition}[theorem]{Definition}
\theoremstyle{remark}
\newtheorem{remark}{Remark}[section]
\newtheorem{example}{Example}[section]
\numberwithin{equation}{section}
\definecolor{battleshipgrey}{rgb}{0.52, 0.52, 0.51} 
\tikzstyle{every node}=[font=\normalsize]
\newcommand{\bA}{\mathbb{A}}
\newcommand{\bO}{\mathbb{O}}
\newcommand{\K}{\mathbb{K}}
\newcommand{\R}{\mathbb{R}}
\newcommand{\Z}{\mathbb{Z}}
\newcommand{\bC}{\mathbb{C}}
\newcommand{\PP}{\mathbb{P}}
\newcommand{\F}{\mathbb{F}}
\newcommand{\bH}{\mathbb{H}}
\newcommand{\s}{\mathfrak{S}} 
\newcommand{\bfD}{\mathbf{D}}
\newcommand{\bfC}{\mathbf{C}}
\newcommand{\bfH}{\mathbf{H}}
\newcommand{\Gl}{\mathrm{Gl}}
\newcommand{\GL}{\mathrm{GL}}
\newcommand{\UU}{\mathrm{U}}
\newcommand{\SU}{\mathrm{SU}}
\newcommand{\OO}{\mathrm{O}}
\newcommand{\SO}{\mathrm{SO}}
\newcommand{\Sp}{\mathrm{Sp}}
\newcommand{\End}{\mathrm{End}}
\newcommand{\tr}{\mathrm{tr}}
\newcommand{\id}{\mathrm{id}}
\newcommand{\Cl}{\mathbf{Cl}} 
\newcommand{\eps}{\varepsilon}
\newcommand{\msk}{\medskip}
\newcommand{\ssk}{\smallskip}
\newcommand{\nin}{\noindent}
\newcommand{\wt}{\widetilde}
\begin{document}

\title[On Group and Loop Spheres]{On Group and Loop Spheres}                                     
\author{Wolfgang Bertram}                 

\subjclass[2010]{11E04, 
11E16, 
11H56, 
11R11, 
11R52, 
15A63, 
16S99, 
17A40, 
17A75, 
17C37, 
20N05, 
20N10, 
51N30 
}    

\keywords{composition algebra, quaternions, octonions, (binary) quadratic form, sphere, 
generalized dicyclic group,
circle group, group spherical space, Moufang loop spherical space, torsor, ternary loop}         

\address{%
Wolfgang Bertram\\              
Institut Elie Cartan de Lorraine,\\
Site de Nancy
\\
B.P. 70239
\\
F - 54506 Vand\oe uvre Cedex,
France \\            
wolfgang.bertram@univ-lorraine.fr  
\\
\url{http://wolfgang.bertram.perso.math.cnrs.fr}          
}

\begin{abstract}
We investigate the problem of {\em defining group or loop structures on
spheres}, where by ``sphere'' we mean the level set $q(x)=c$ of a general
$\K$-valued quadratic form $q$, for an invertible scalar $c$.
When $\K$ is a field and $q$ non-degenerate, then this corresponds to
the classical theory of {\em composition algebras}; 
in particular, for $\K=\R$ and positive definite forms, we obtain the
sequence of the four real division algebras
$\R,\bC,\bH$ (quaternions), $\bO$ (octonions). 
Our theory is more general, allowing that
 $\K$ is merely a ring, and the form $q$ possibly degenerate.
To achieve this goal, we give a more geometric formulation,
replacing the theory of binary composition algebras by
{\em ternary algebraic structures}, thus defining categories of
{\em group spherical} and of {\em Moufang spherical spaces}.
In particular, we develop a theory of {\em ternary Moufang loops},
and show how it is related to the Albert-Cayley-Dickson construction
and to generalized ternary octonion 
algebras.
At the bottom, a starting point of the whole theory is the (elementary)
result that {\em every $2$-dimensional quadratic space carries a canonical
structure of commutative group spherical space}.
\end{abstract}

\maketitle

\section{Introduction}

\subsection{Spheres and groups.}
Some spheres ``are'' groups, some ``are'' loops, but most are not.
Those that are bear a close relation to the
{\em four real division algebras}
$\R$, $\bC$, $\bH$ (quaternions), and $\bO$ (octonions).
Namely, the following are 

\ssk
-- {\em groups}: $S^0 = \OO(1)$, $S^1 = \SO(2) = \UU(1)$, $S^3 = \SU(2) = \Sp(1)$, or

-- {\em (Moufang) loops}: case of the unit sphere $S^7$ of the octonions.
\ssk

\nin  
More generally, let us call  {\em sphere} the level set $S = \{ x \in V \mid q(x) = c \}$ of a 
 quadratic form $q:V \to \K$, where $c\in \K^\times$. 
 The problems I am going to study are:
 \begin{itemize}
 \item
 What is  a ``natural'' group structure on a sphere ?
 
Develop a theory of such ``group spherical spaces''!
 \item
 What is a ``natural'' Moufang loop structure on a sphere?

 Develop a theory of such ``Moufang spherical spaces''!
\end{itemize}

\nin
 We do not assume that $2$ is invertible in $\K$; so the important cases $\K=\Z$ and $\K = \F_{2^m}$ are allowed; and we will not assume that $q$ is non-degenerate. 
 In the case where 
 the quadratic form $q$ is {\em non-degenerate}, 
very much, if not all, is known, since such spheres correspond to 
{\em composition algebras}, on which there exists a huge literature.
Without  being exhaustive, let met just mention
\cite{CSl, CS, Eb, McC, Fa, KMRT, SV}.
However, as far as I see, none of these authors considered the question for
completely general forms, and did not aim at
defining a suitable {\em category} of such spaces. 
To do this, we need a more geometric approach,
which I will develop in the present work: the
 {\em ternary algebraic viewpoint}.
Thus the present work contains both an overview over classical material, and
a new look on it, with a broader scope, new examples, and
perspectives for further research imbedding the classical theory into a ``categorical''
view on general geometric and algebraic structures.
Indeed, my approach is guided by trying to understand the {\em interplay of  
Jordan and Lie structures} (see \cite{Be00, Be14, BeKi1}), where
the four real division algebras play a key r\^ole, both
as ``number systems''  and for
geometry and algebra: they are pervasive  in the interaction of
Lie- and Jordan-theory,  in 
 applications and examples, and the octonions are related to almost all
``exceptional''' Lie- and Jordan structures. 
In the last section \ref{sec:outlook} we outline some of these further topics.

\subsection{Binary {\em versus} ternary algebra.}
A sphere does not have any ``canonical'' base point, and every choice
of ``unit element in a sphere'' would be ungeometric and artificial.
Therefore no sphere will carry a natural {\em group} structure -- rather, it could be
a ``group with unit forgotten''. 
There is a very simple way to ``forget the unit in a group'': replace the {\em binary}
product $ab$ by the {\em ternary} product
\begin{equation}
(a \vert b \vert c) := 
(abc):= ab^{-1} c .
\end{equation} 
This product has two characteristic properties defining a {\em torsor}
(see Appendix \ref{app:torsors})

\ssk
 (IP): {\em idempotency}, $(aab) = b = (baa)$,

 (PA): {\em para-associativity}, $(ab(cde))=((abc)de)=(a(dcb)e)$,

\ssk
\nin
and from these the group laws can be recovered by
$a\cdot c:= (abc)$, where the middle element $b$ becomes the new origin
(Theorem \ref{th:Torsors}).
Summing up, ``torsors are for groups what affine spaces are for vector spaces''.
Our first question can now be reformulated:
{\em 
What is a ``natural torsor structure'' on a sphere?}
To fix ideas, let us illustrate it by the example of the unit circle $S^1$, or more generally,
a {\em conic centered at $0$}, the
 level set $\{ x \in \K^2 \mid q(x) = c \}$ of a {\em binary quadratic form}
$q:\K^2 \to \K$.

\subsection{All circles are torsors.}
What is the ``most natural'', or  ``simplest'', construction of the torsor law on the unit circle $S^1$?
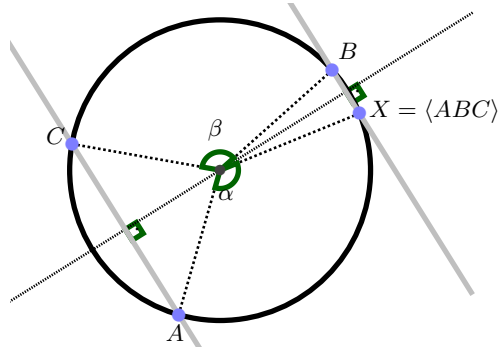
\begin{figure}[h]\label{fig:1}\caption{Construction of the group law on the circle}
\begin{center}
\newrgbcolor{xdxdff}{0.49019607843137253 0.49019607843137253 1.}
\newrgbcolor{qqwuqq}{0. 0.39215686274509803 0.}
\psset{xunit=0.5cm,yunit=0.5cm,algebraic=true,dimen=middle,dotstyle=o,dotsize=5pt 0,linewidth=2.pt,arrowsize=3pt 2,arrowinset=0.25}
\begin{pspicture*}(-5.566459233714445,-4.682418646844064)(7.7139601307903645,4.441010744768212)
\pspolygon[linewidth=2.pt,linecolor=qqwuqq,fillcolor=white,fillstyle=solid,opacity=0.1](-2.338216981545509,-1.8664975204371506)(-2.0457286088271425,-1.683868596821653)(-2.22835753244264,-1.3913802241032864)(-2.5208459051610066,-1.5740091477187843)
\pspolygon[linewidth=2.pt,linecolor=qqwuqq,fillcolor=white,fillstyle=solid,opacity=0.1](3.5259747959061274,1.7950874589496058)(3.818463168624494,1.9777163825651034)(3.635834245008996,2.2702047552834697)(3.3433458722906297,2.087575831667972)
\pscircle[linewidth=2.pt](0.,0.){2.}
\psplot[linewidth=1.pt,linestyle=dashed, dash=0.5pt 0.5pt]{-6.566459233714445}{6.7139601307903645}{(-0.--2.8359426751122814*x)/4.5418887750361385}
\psline[linewidth=1.pt,linestyle=dashed,dash=1pt 1pt](0.,0.)(-1.1028745676048661,-3.844953535236854)
\psline[linewidth=1.pt,linestyle=dashed,dash=1pt 1pt](0.,0.)(-3.9388172427171475,0.696935239799285)
\psline[linewidth=1.pt,linestyle=dashed,dash=1pt 1pt](0.,0.)(3.704143125978822,1.5097429258863388)
\psline[linewidth=1.pt,linestyle=dashed,dash=1pt 1pt](0.,0.)(2.9825486186024373,2.6654087374496043)
\pscustom[linewidth=2.pt,linecolor=qqwuqq,fillcolor=white,fillstyle=solid,opacity=0.1]{
\parametricplot{0.7293058135106493}{2.9664650305700957}{0.4876530978398339*cos(t)+0.|0.4876530978398339*sin(t)+0.}
\lineto(0.,0.)\closepath}
\pscustom[linewidth=2.pt,linecolor=qqwuqq,fillcolor=white,fillstyle=solid,opacity=0.1]{
\parametricplot{-1.8501335700068862}{0.38702564705255993}{0.4876530978398339*cos(t)+0.|0.4876530978398339*sin(t)+0.}
\lineto(0.,0.)\closepath}
\psplot[linewidth=2.pt,linecolor=lightgray]{-6.566459233714445}{6.7139601307903645}{(-15.913201433279403-4.5418887750361385*x)/2.8359426751122814}
\psplot[linewidth=2.pt,linecolor=lightgray]{-6.566459233714445}{6.7139601307903645}{(--5.37017377470074-1.1556658115632654*x)/0.7215945073763845}
\begin{scriptsize}
\psdots[dotsize=4pt 0,dotstyle=*,linecolor=darkgray](0.,0.)
\psdots[dotstyle=*,linecolor=xdxdff](2.9825486186024373,2.6654087374496043)
\rput[bl](3.1378374132982496,2.951510565353026){{$B$}}
\psdots[dotstyle=*,linecolor=xdxdff](-3.9388172427171475,0.696935239799285)
\rput[bl](-4.6207856032191755,0.7083063152897905){{$C$}}
\psdots[dotstyle=*,linecolor=xdxdff](-1.1028745676048661,-3.844953535236854)
\rput[bl](-1.446101706396189,-4.547030798983153){{$A$}}
\psdots[dotstyle=*,linecolor=xdxdff](3.704143125978822,1.5097429258863388)
\rput[bl](3.966847679625967,1.2447247229136076){{${X = \langle ABC \rangle}$}}
\rput[bl](-0.04816282592199858,-0.8034182880136942){{$\alpha $}}
\rput[bl](-0.32449958136457113,0.7733267283351016){{$\beta $}}
\rput[bl](-2.2913670759852343,-1.7137040706480506){{$\cdot$}}
\rput[bl](3.560470098092772,1.9436941631507028){{$\cdot $}}
\end{scriptsize}
\end{pspicture*}
\end{center}
\end{figure}
Let $A,B,C$ be three points on $S^1$, and consider $B$ as ``origin'' of $S^1$.
The product $A \cdot C$ with respect to this origin is constructed as follows:
assume first $A,B,C$ are pairwise distinct.
Let $ X := \langle ABC\rangle$ be the second intersection point of the parallel to the line
${AC}$ through $B$ with the circle.  By construction,
$ X$ is the image of $B$ under the orthogonal symmetry $S_{A,C}$
that exchanges $A$ and $C$ and fixes the center of the circle. 
But $X$ is also the image of $A$ under the rotation $R_{C,B}$ taking $B$ to $C$
(Figure \ref{fig:1}: note the equality of angles $\alpha = \beta$),
as well as the image of $C$ under the rotation $L_{A,B}$ taking $B$ to $A$.
But this means exactly that
  $A\cdot C = \langle ABC \rangle = L_{A,B}(C)$ is the product of $A$ with $C$ in the group
  $S^1$ with origin (unit element) $B$!
  When $A=C$, then use the tangent line at $A$, and the same construction works; and when
  $A=B$ or $C=B$, then simply
  $\langle BBC\rangle = C$ and $\langle ABB\rangle = A$.
Summing up, 
the ternary product $\langle ABC \rangle$ is in fact the torsor law $(ABC)$ on the
circle $S^1$:
\begin{equation}\label{eqn:ABC}
A \cdot_B C = \langle ABC \rangle = X = S_{A,C}(B) = L_{A,B}(C) = R_{C,B}(A).
\end{equation}
Next, observe that this ternary map
  extends naturally to a {\em $\R$-trilinear product} on $V=\R^2$.
Identifying $\R^2$ with the complex plane $\bC$, this trilinear extension is given by
\begin{equation}
\langle ABC \rangle = A \cdot \overline B \cdot C.
\end{equation}
All of this generalizes:
 {\em all circles (level sets of a binary quadratic form) carry a canonical commutative torsor structure}.
This observation might be considered to be folklore\footnote{E.g., see  \cite{Bo15}, p. 48--50, where
this construction is given under some unnecessarily restrictive assumptions,
or \url{https://math.stackexchange.com/questions/3951052/groups-of-conics}.};
however, 
we have not been able to find it explicitly stated in the literature, and
the fact that it holds in the completely general case (no assumption whatsoever on the binary
quadratic form) seems to be new; 
therefore we state and prove it in this generality (Theorem \ref{th:circles}).

\subsection{Group spherical spaces.}\label{sec:Gsp}
The preceding example motivates the following definition of a ``natural'' torsor structure on 
a sphere:
it should be given by a {\em $\K$-trilinear product map}
$V^3 \to V$, $(x,y,z) \mapsto \langle x\vert y \vert z\rangle$ (shorter: $\langle xyz \rangle$),
satisfying 
\begin{enumerate}
\item
the ``$q$-analog'' of idempotency: 
$\langle xxy \rangle = q(x)y = \langle yxx \rangle$ (following McCrimmon, \cite{McC},
we call this also  the {\em Kirmse identity}, (K)),
\item
the para-associative law (PA) in the form given above,
\item
the {\em ternary composition law} (TC)
$q(\langle xyz \rangle) = q(x) q(y) q(z)$.
\end{enumerate}
A {\em group spherical space} is a quadratic space $(V,q)$ together with 
a trilinear map having these properties, and such that non-empty spheres exist:
the set
\begin{equation}\label{eqn:Vtimes}
V^\times := \{ x \in V \mid q(x) \in \K^\times \} 
\end{equation}
shall be non empty. The space
is called {\em commutative} if moreover $\langle xyz \rangle = \langle zyx \rangle$.
For instance, the complex plane with
$T(x,y,z)=x \overline y z$ clearly satisfies these properties.
Our general result (Theorem \ref{th:circles}) states that
{\em every binary quadratic form $q:\K^2 \to \K$ admits exactly one ternary product satisfying
(K), and this product also satisfies (PA) and (TC)}. 
This settles the $2$-dimensional case.

The general theory of group spherical spaces is close to the one of torsors,
since their ternary products have very similar properties. 
In both cases, 
the ternary concept is base point-free and geometric, and gives, in three ways,
rise to binary compositions:
\begin{enumerate}
\item[(A)]
 We may take the ``diagonal'' $x=z$ and define a binary product
$\mu(x,y):=(xyx)$.
For a (Lie) group $G$, this means to consider $G$ as {\em symmetric space}
with product $\mu(x,y)=xy^{-1}x$, see \cite{Lo69}.
In the same way, {\em any} sphere carries a canonical structure of
``symmetric space over $\K$''. If the sphere is of group type, then this
structure is ``underlying'' to its group structure (Section \ref{sec:symmetricspaces}).
\item[(B)]
 For any ternary product,
{\em inner operators}
$L_{x,y}$ and $R_{z,y}$ and   $S_{x,z}$, are defined
via $\langle xyz \rangle = S_{x,z}(y) = L_{x,y}(z) = R_{z,y}(x)$,
see Equation (\ref{eqn:ABC}). 
In a torsor, the left (resp.\ right) translations form a group isomorphic to the original one,
and in a group spherical space, they
 form an algebra generalizing the algebra $\bC$ of similarities of
Euclidean plane (Subsection \ref{sec:spirations}).
In this case,  the operators
$L_{x,y}$ and $R_{x,y}$ are called  {\em left (right) spirations} 
and $S_{x,z}$ {\em spiflections}.
\item[(C)]
Fixing a base point also leads back to binary algebras:   in a group spherical space,
the ``homotope'' $x \cdot z := \langle xe z \rangle$ is, for every base point
$e \in V^\times$, a {\em (generalized) composition algebra}, and conversely,
the whole structure can be recovered from the binary product
(Theorem \ref{th:binary-ternary}).
\end{enumerate}
Via (C),
the case of a {\em non-degenerate} form $q$ corresponds to the case of
a (usual, i.e., non-degenerate) composition algebra, and in this case much is known.
In particular, non-degenerate group spherical spaces are

-- either commutative, $1$ or $2$-dimensional (``unarions'', ``binarions''),

-- or non-commutative, necessarily $4$-dimensional (quaternion algebras).

\nin
For instance, the algebra $V=M(2,2;\K)$ with quadratic form
$q(x)=\det(x)$ and product $\langle XY Z \rangle = XY^\sharp Z$
(where $Y^\sharp$ is the adjugate matrix) is an example of a quaternion algebra
(split case; the form is of signature $(2,2)$).

In the degenerate case, there are new examples: it is not true that the dimension
must be either $1,2$ or $4$; there are degenerate group spherical spaces of any
dimension (Section \ref{sec:representations}).
They are constructed using ideas from Jordan theory \cite{Lo75}: 
there is a notion of {\em (general) representation $U$ of a group spherical space $W$},
and of a {\em split null extension} $V=W \oplus U$ defined by such a representation.
If $U$ is {\em commutative} then $V$ is again a  group spherical space.
Such spaces form an interesting class of spaces, a sort of degenerate analog of
quaternion algebras in arbitrary dimension.
For the time being we have no
examples that are not either of this form, or non-degenerate.

Finally,
in the theory of Jordan-, associative or alternative algebras, an important conceptual step consists in
introducing the notion  of (Jordan-, associative, resp.\ alternative) {\em pair}, see \cite{Lo75}.
Going from triple systems to pairs
 is the analog of considering pairs of vector spaces in duality, instead of considering
spaces with bilinear form. 
We explain in Section \ref{sec:polarized} how
this can be done in the category of group spherical spaces.

\subsection{Moufang spherical spaces.}
The fourth real division algebra, after $\R,\bC,\bH$, is the {\em algebra of octonions}
$\bO$, which has been widely popularized by the paper \cite{Baez}, and is central 
for constructions of ``exceptional objects'' of all sorts, cf.\  \cite{CS, CSl}.
It is not associative but {\em alternative}, and its unit sphere $S^7$ is a 
{\em Moufang loop}. This remains true for more general octonion algebras, 
and tor this reason it is desirable to extend our theory to the case of Moufang loops.

In a first step, we must define a {\em ternary} concept corresponding to the one of 
Moufang loop, in the same way as torsors correspond to groups.
This is done in Appendix \ref{app:torsors} (Theorem \ref{th:ternaryMouf}), which
may have some independent interest.
On the one hand, it takes up Loos' concept of {\em alternative triple system}
\cite{Lo72b, Lo75}, whose defining identities (AP1), (AP2) have been used in joint
work with Michael Kinyon \cite{BeKi14} to define a concept of {\em ternary Moufang loop}.
In Appendix \ref{app:torsors}, we stress that this concept has in fact two versions,
a ``left'' and a ``right'' one, and we give new axioms characterizing them,
inspired by the presentation given by Conway and Smith (\cite{CS}, Section 7):
they are {\em ternary inverse loops}, satisfying moreover a left (resp.\ right)
version of the classical {\em Chasles relation} and of the {\em autotopy property}.
These axioms feature the geometric properties of Moufang loops. 

In a second step, we then define the notion of {\em Moufang spherical space}
(Section \ref{sec:Moufangspheres}):
as above, it is defined by a trilinear map an a quadratic space $(V,q)$ which
again satisfies Kirmse (K) and ternary composition (TC), along with 
Loos' axioms of an alternative triple system; by the above discussion it is then
clear that spheres in such spaces become (left, resp.\ right) ternary Moufang
torsors. Every identity valid in a ternary Moufang torsor has a ``$q$-analog''
for the trilinear product, and the relation between loops and algebras becomes
more transparent.

\subsection{The Albert-Cayley-Dickson (ACD) construction.}
The non-degenerate composition algebras are all constructed, starting from the
unarions $\K$, in several steps by the {\em Albert-Cayley-Dickson construction},
which generalizes the sequence $(\R,\bC,\bH,\bO)$.
It is one of the aims of the present work to contribute to a better understanding of this
construction, from a conceptual viewpoint:

First, the starting point rather is  the ``binarion'' algebra and not the ``unarions'', since
{\em every} binary quadratic form can be used to initialize the construction.
(This makes a difference if $2$ is not invertible in $\K$; cf.\ \cite{Fa}, p.53.)

Second, there is an abstract version of this construction, the ``Moufang double of a group''
(Appendix \ref{app:Moufang-double}), associating to every abstract group $G$ a Moufang
loop $D(G)=G \sqcup G$. The ACD-construction can be understood as being encoded
by such an abstract Moufang double construction.
More specifically, 
$D(G)$ satisfies relations similar to those of the
{\em generalized dicyclic group} of $G$ -- which is a group if $G$ is commutative,
but a non-associative loop if $G$ is non-commutative (cf.\ Def.\ \ref{def:dih(G)}).

Third, there is a {\em ternary} and base point-free
 version of the ACD-construction (``ABCD-construction'').
This version shows that, again, it is necessary and useful to distinguish two versions
of the ACD-construction,  ``left'' and ``right'' 
ones.\footnote{McCrimmon \cite{McC} uses the ``left'', and 
Faulkner \cite{Fa} the ``right'' one.}

Fourth, there is a close relation between quaternion and {\em Clifford algebras}:
we discuss this in Subsections \ref{sec:Clifford1} and \ref{sec:Clifford2}.

Finally,
the construction of the split null extension can be seen as a ``degenerate'' case of the
ABCD-construction.
In particular,  a  split null extension of a non-commutative group spherical space
is a non-associative Moufang spherical space.
This gives new examples of such spaces, which are not given by classical
octonion algebras, and not always of dimension $8$.
It should be interesting to develop a more complete structure theory, either proving
that all new examples are such split null extensions, or exhibiting new non-split
extensions, and describing the structure of a general Moufang spherical space in
terms of extensions of non-degenerate spaces.

\subsection{Further topics.}
I have been working on preliminary versions of this text for several years, and they have
all become too long, so for the present version I decided to exclude several 
sections that would have lead too far ahead.  I will  mention some of these topics
in a last Section \ref{sec:outlook}.





\msk
\nin {\bf
Acknowledgements.}
Many of the ideas and approaches presented in this work have their origin in 
Jordan algebra theory, and I owe gratitude to the pioneers of this domain, and in particular
to Ottmar Loos for his advices and his  criticism. 
I would like to dedicate this work to his memory.

\setcounter{tocdepth}{1}
 \tableofcontents

 \bigskip \nin
{\bf Notation.} 
By $\K$ we denote a commutative base ring $\K$ with unit $1$.
We do not assume that $2$ be invertible in $\K$.
For instance, $\K=\Z$ is admitted.
The canonical symplectic form on $\K^2$ is denoted by
$[x,y]=x_1 y _2 - x_2 y_1$.

\section{All spheres are symmetric spaces}\label{sec:symmetricspaces}

\subsection{Spheres.}
A  map $f:V \to W$ between $\K$-modules $V,W$
 is called {\em quadratic} if 
\begin{enumerate}
\item[(q1)] it is {\em homogeneous of degree two}:
$\forall \lambda \in \K, \forall v \in V$ : $q(\lambda v) = \lambda^2 q(v)$,
\item[(q2)] the {\em polarized map} $b_q$ is $\K$-bilinear: 
\begin{equation}
b_q: V^2 \to W, \quad (u,v) \mapsto b_q(u,v) := q(u+v) - q(u) - q(v).
\end{equation}
\end{enumerate}
A {\em quadratic space} is a $\K$-module $V$ together with a quadratic form
$q:V \to \K$. 
By {\em sphere}, or {\em quadric with center $0$}, we mean the level set
$q^{-1}(c)=\{ x \in V \mid \, q(v) = c \}$ of a quadratic form
$q$, where the scalar $c \in \K$ is assumed to be
{\em invertible in $\K$}: $c \in \K^\times$.
When $V =\K^2$, the form is called a {\em binary quadratic form}, and the spheres
are also called {\em  $q$-conics},  or {\em $q$-circles}.
We will make no assumption on the quadratic form, except that it admits
some non-empty sphere. In other words, we assume that the
{\em set $V^\times$ of invertible elements} of the quadratic form $q$, defined by
(\ref{eqn:Vtimes}), is non-empty.
This set has a double fibration: the ``horizontal'' fibers are the spheres, and the ``vertical'' fibers
are the {\em radii} 
$\K^\times v$, orbits of the action of $\K^\times$ on $V^\times$.

\begin{remark}\label{rk:convention}
Note that $b_q(x,x)=2 q(x)$. 
For every quadratic form $q$,
there exist possibly non-symmetric  bilinear maps $b$ such that $q(x) = b(x,x)$.
If $2$ is invertible in $\K$, then we may take $b = \frac{1}{2} b_q$, so in particular $b$ can be chosen
symmetric. In general, it may be impossible to choose $b$ symmetric.
For a {\em binary quadratic form} $q:\K^2 \to \K$, the choice of a such a form $b$ 
can be done as follows:
we write
\begin{equation}\label{eqn:binaryq}
q(x) = \alpha x_1^2 + \beta x_1 x_2 + \gamma x_2^2 ,
\end{equation}
and then we choose
\begin{equation}\label{eqn:binaryb}
b(x,y) = \alpha x_1 y_1 + \beta x_1 y_2 + \gamma x_2 y_2 .
\end{equation}
Then $b$ is in general not symmetric, and $q(x)=b(x,x)$,
$b_q(x,y) = b(x,y) + b(y,x)$. 
\end{remark}

\subsection{The Jordan structure.}

\begin{Definition}\label{rk:JTS}\label{def:JTS}
In any quadratic space $(V,q)$ we define the {\em Jordan maps}
\begin{align*}
Q:V \times V \to V, \quad 
& Q_x (y ) := Q_x y :=  b_q(x,y) x - q(x) y , \\
D:V^3 \to V, \qquad & D_{x,z}  y  : =     Q_{x+z} y - Q_x y-Q_z y 
\\ & \qquad = b_q(x,y)z + b_q(y,z)x - b_q(x,z)y  .
\end{align*}
\end{Definition}

\begin{remark}\label{rk:Jordan}
Although we won't use this here, let us mention that $(Q,D)$ defines
 a  {\em (quadratic) Jordan triple system},
see \cite{Lo75}, which is sometimes called a {\em spin factor}.
It is uniquely determined by the quadratic form $q$. 
We will see later that it satisfies also the so-called {\em fundamental formula}
(which can be proved by direct but lengthy computation)
\begin{equation}\label{eqn:Fufo}
Q_x \circ Q_y \circ Q_x = Q_{Q_xy} .
\end{equation}
 In general Jordan pair theory,
an element is called {\em invertible} if the operator $Q_x$ is invertible.
The following lemma shows that this is the case iff $q(x) \in \K^\times$, i.e., iff
$x\in V^\times$, and so our notation is in keeping with the Jordan theoretic one.
\end{remark}

\begin{lemma}\label{lemma:Q}
For any quadratic space $(V,q)$ and $e \in V$, the operator $Q_e$ satisfies:
\begin{align*}
(Q_e)^2 & = q(e)^2 \id, \\
q( Q_e(x)) & = q(e)^2 q(x).
\end{align*}
It follows that, if $q(e)$ is invertible in $\K$, then $s_e:V \to V$,
$$
s_e(x) := \frac{Q_e}{q(e)}(x) = 
 \frac{b_q(e,x)}{q(e)} e - x
 $$
defines an isometry of order $2$, fixing $e$.
\end{lemma}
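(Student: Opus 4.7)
The proof should be a straightforward direct computation from the definition $Q_e(x) = b_q(e,x)e - q(e)x$, so I would organize it as three short verifications, with the main technical ingredient being the identity $q(\lambda u + \mu v) = \lambda^2 q(u) + \lambda\mu\, b_q(u,v) + \mu^2 q(v)$, which follows from (q1) and the definition of $b_q$.

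\textbf{Step 1: Compute $(Q_e)^2$.} I would substitute the definition twice:
\begin{align*}
Q_e(Q_e(x)) &= b_q(e,\, b_q(e,x)e - q(e)x)\, e - q(e)\bigl(b_q(e,x)e - q(e)x\bigr).
\end{align*}
Using $\K$-bilinearity of $b_q$ and the basic identity $b_q(e,e) = 2q(e)$, the first term becomes $b_q(e,x)\cdot 2q(e)\cdot e - q(e)\,b_q(e,x)\,e$, and combining with the second term everything collapses to $q(e)^2 x$, giving $(Q_e)^2 = q(e)^2 \id$.

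\textbf{Step 2: Compute $q(Q_e(x))$.} I would apply the polarization identity to $Q_e(x) = b_q(e,x)e - q(e)x$ with scalars $\lambda = b_q(e,x)$ and $\mu = -q(e)$. This yields three terms
\begin{equation*}
b_q(e,x)^2\, q(e) \;-\; b_q(e,x)\,q(e)\,b_q(e,x) \;+\; q(e)^2\, q(x),
\end{equation*}
where the first two cancel, leaving $q(e)^2 q(x)$.

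\textbf{Step 3: Properties of $s_e$.} Assuming $q(e) \in \K^\times$, the formula $s_e = Q_e/q(e)$ is well-defined; rewriting gives the stated expression $s_e(x) = (b_q(e,x)/q(e))\,e - x$. Then $s_e \circ s_e = Q_e^2/q(e)^2 = \id$ from Step 1, and $q(s_e(x)) = q(Q_e(x))/q(e)^2 = q(x)$ from Step 2, so $s_e$ is an isometry of order $2$ (or the identity; the claim of ``order $2$'' is understood in the generic sense). Finally $s_e(e) = (b_q(e,e)/q(e))\,e - e = 2e - e = e$, so $e$ is fixed.

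I expect no real obstacle: the only subtlety is being careful that nothing requires $2$ to be invertible, since $\K$ is allowed to be arbitrary. The cancellations in Steps 1 and 2 occur without ever dividing by $2$, and the cancellation $b_q(e,e) = 2q(e)$ is used only multiplicatively. The introduction of $1/q(e)$ in Step 3 is permissible precisely by the invertibility hypothesis on $q(e)$.
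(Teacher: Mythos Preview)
Your proof is correct and follows essentially the same direct computation as the paper's own proof: both substitute the definition of $Q_e$ twice for Step~1, expand $q(\lambda e - \mu x)$ via the polarization identity for Step~2, and deduce the properties of $s_e$ by dividing through by $q(e)$. The only cosmetic difference is that the paper records $Q_e(e)=q(e)e$ to conclude $s_e(e)=e$, whereas you verify $s_e(e)=e$ directly via $b_q(e,e)=2q(e)$; both are immediate.
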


\begin{proof}
The proof is standard in the theory of quadratic forms (cf.\ \cite{Fa}, p.127):
\begin{align*}
Q_e(Q_e(x)) & = b_q\bigl(e, b_q(e,x) e - q(e) x\bigr) e - q(e)  (b_q(e,x) e - q(e) x)
\\ & = \bigl( b_q(e,e) b_q(e,x) - 2 q(e) b_q(e,x)  \bigr) e  +  q(e)^2 x  = q(e)^2 x,
\\
q( Q_e (x)) & = q ( b_q(e,x) e - q(e) x ) 
\\ & = q(b_q(e,x)e) + q(q(e)x) - b_q( b_q(e,x)e, q(e)x) 
\\
& = b_q(e,x)^2 q(e) +
q(q(e)x) - b_q(e,x)^2 q(e)  = q(e)^2 q(x).
\end{align*}
Dividing by $q(e)$, the statements about $s_e$ follow.
(Note $Q_e(e) =q(e)e$.)
\end{proof}

\subsection{Existence of $q$-compatible ternary products.}

\begin{lemma}\label{la:existence} 
On every quadratic space $(V,q)$, there exists a $q$-compatible trilinear product $\langle xyz \rangle$,
i.e., we have the Kirmse identity 
 $\langle xxy \rangle = q(x)y = \langle yxx\rangle$. 
This product is in general not unique, but
 the ``outer diagonal''  $\langle xyx \rangle$ is, and it agrees with the Jordan map
defined above: 
$$
\forall x,y \in V : \qquad \langle x \vert y \vert x \rangle = b_q(x,y)x - q(x) y = Q_xy .
$$
\end{lemma}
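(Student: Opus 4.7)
The plan is to produce an explicit candidate for $\langle xyz\rangle$, verify (K) by direct substitution, then compute the outer diagonal to match $Q_x y$, and finally deduce that \emph{every} $q$-compatible ternary product must have this same outer diagonal via a short polarization argument.

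For existence, invoke Remark \ref{rk:convention} to pick a $\K$-bilinear form $b:V\times V \to \K$ with $b(x,x)=q(x)$ (when $2\in\K^\times$ one can simply take $b=\tfrac12 b_q$, otherwise $b$ is assembled from coordinates as in (\ref{eqn:binaryb})). Set
\[
\langle xyz\rangle \,:=\, b(x,y)\,z + b(y,z)\,x - b(x,z)\,y,
\]
which is manifestly $\K$-trilinear. The Kirmse identity is then a one-line substitution: $\langle xxy\rangle = b(x,x)y + b(x,y)x - b(x,y)x = q(x)y$, and symmetrically $\langle yxx\rangle = b(y,x)x + b(x,x)y - b(y,x)x = q(x)y$.

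For the outer diagonal, set $z=x$ in the formula:
\[
\langle xyx\rangle = b(x,y)x + b(y,x)x - b(x,x)y = \bigl(b(x,y)+b(y,x)\bigr)x - q(x)y = b_q(x,y)x - q(x)y = Q_x y,
\]
where I used $b(v,v)=q(v)$, which together with the defining formula for $b_q$ gives $b_q(x,y)=b(x,y)+b(y,x)$. The right-hand side is independent of the chosen $b$, so this candidate has the required outer diagonal.

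Uniqueness of the outer diagonal, for \emph{any} Kirmse-compatible trilinear $T$, is the cleanest step: if $T_1,T_2$ both satisfy (K) then $D:=T_1-T_2$ vanishes whenever its first two arguments, or its last two arguments, coincide. Polarizing $D(x,x,y)=0$ in the first variable (replace $x$ by $x+z$ and expand) yields $D(x,z,y)+D(z,x,y)=0$, i.e.\ $D$ is skew in its first two slots. Hence $D(x,y,x) = -D(y,x,x) = 0$, so $T_1(x,y,x)=T_2(x,y,x)$. Combined with the explicit computation above, this gives $\langle xyx\rangle = Q_x y$ for every $q$-compatible trilinear product, while non-uniqueness of $\langle xyz\rangle$ itself is visible from the construction: adding to $b$ any alternating bilinear form $c$ (with $c(v,v)=0$) produces a different Kirmse-compatible product whenever such $c\neq 0$ exists on $V$. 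I anticipate no serious obstacle: the only subtlety is the initial existence of $b$, and Remark \ref{rk:convention} settles that within the paper's standing hypotheses.
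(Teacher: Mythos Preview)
Your proof is correct and follows essentially the same approach as the paper: the same explicit trilinear candidate built from a bilinear $b$ with $b(x,x)=q(x)$, the same direct verification of Kirmse, and the same polarization idea for the outer diagonal. The only organizational difference is that the paper polarizes the left Kirmse identity $\langle xxy\rangle=q(x)y$ directly to obtain $\langle xzy\rangle+\langle zxy\rangle=b_q(x,z)y$ for \emph{any} $q$-compatible product, and then sets $y=z$ to read off $\langle zxz\rangle=b_q(x,z)z-q(z)x$ in one stroke, whereas you first compute the outer diagonal for your specific candidate and then separately argue that any two Kirmse-compatible products agree there; your added remark on non-uniqueness via alternating $c$ is a nice complement.
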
 

\begin{proof}
Choose any bilinear form $b$ on $V$ such that $q(x)=b(x,x)$, and let
\begin{equation}\label{eqn:drei} 
\langle x\vert y \vert z\rangle  :=
b(x,y)z - b(x,z)y + b( y,z) x .
\end{equation}
This clearly is trilinear, and for $x=y$ it gives
$b(x,x) z - b(x,z)x+b(x,z)x= q(x) z$ and for $y=z$ we obtain
$b(x,z)z - b(x,z,) z + b(z,z)x= q(z)x$, whence $q$-compatibility.
Moreover,  polarizing the condition $q(x) y = \langle x \vert x \vert y \rangle = q(x)y$,
 we get
\begin{equation}\label{eqn:R+}
\langle  x\vert z \vert y \rangle  + \langle  z \vert x\vert y \rangle  = b_q(x,z) y.
\end{equation} 
Letting $z=y$, we get $q(z)x + \langle  z\vert x \vert z\rangle   = b_q(x,z)z$, whence the formula for the
outer diagonal $\langle x y x \rangle$.
\end{proof}

The trilinear product from the lemma is related to the Jordan map $D$ via
$\langle xyz\rangle + \langle zyx \rangle = D(x,z)y$.
In general, the ternay product $\langle xyz \rangle$
 will have a skew-part, which depends on the choice of 
the skew-part of $b$.  
The notation $\langle xyx \rangle$ is often more intuitive than $Q_x(y)$ since it becomes better
visible that this expression is {\em quadratic in $x$ and linear in $y$}.

\subsection{Reflection spaces and symmetric spaces.}\label{sec:reflectionspace}
Following Loos \cite{Lo67, Lo69}, we call {\em reflection space} a set $M$ together with a 
``product map''
$\sigma: M \times M \to M$, $(x,y) \mapsto \sigma(x,y)=:\sigma_x(y)$,
satisfying the following identities:
\begin{enumerate}
\item[(S1)] $\sigma_x(x)=x$
\item[(S2)] $\sigma_x \circ \sigma_x = \id$
\item[(S3)] $\sigma_x \circ \sigma_y \circ \sigma_x =\sigma_{\sigma_x(y)}$.
\end{enumerate}
In other words, the ``left translations'' $\sigma_x$ are maps of order two fixing $x$ and
are automorphisms of the whole structure.
If, moreover, $M$ carries a topology such that the fixed point $x$ of $\sigma_x$ is isolated,
then we call $M$ a {\em (topological) symmetric space}.
Every {\em smooth and connected} symmetric space is a homogeneous space
$M = G/H$ under a Lie group action (\cite{Lo69}), but in general it is not true that the automorphism
group of a reflection space $M$ acts transitively on $M$.
For instance, $S^n = \SO(n+1)/\SO(n)$ is a compact homogeneous symmetric space,
but the set $V^\times$ will in general be far from being homogeneous:

\subsection{Spheres as symmetric spaces.}
{\em All} spheres  carry a canonical structure of
{\em symmetric space}. 
The set $V^\times$ carries even {\em three}, in general different, structures of 
reflection space. This is related to the ``double fibration'' of $V^\times$, mentioned above.
To fix ideas, think of $\bC^\times$ with three ``inversions at $e=1$'', namely
$$
s_e(z)=\overline z, \qquad
j_e(z) = \frac{z}{\vert z \vert^2} = \frac{1}{\overline z}, \qquad 
\sigma_e(z) = \frac{1}{z} ,
$$
the first two anti-holomorphic (fixed points are ``real forms''), and the last one holomorphic
(fixed points isolated).

\begin{theorem}
Assume $(V,q)$ is a quadratic space having invertible elements, i.e., $V^\times $ is non-empty.
Then $M = V^\times$ carries three ``product maps'' which all satisfy the axioms 
{\rm (S1), (S2), (S3)}: 
\begin{enumerate}
\item
(``inversion at diameters'')
$$
s:  V^\times \times V^\times \to V^\times, \quad (x,y) \mapsto s_x(y) = \frac{Q_x}{q(x)} (y) = 
 \frac{b_q(y,x)}{q(x)} x - y
$$
\item
(``inversion at spheres'')
$$
j:  V^\times \times V^\times \to V^\times, \quad (x,y) \mapsto j_x(y) :=  \frac{q(x)}{q(y)} y
$$
\item
(``point inversive'')
$$
\sigma:  V^\times \times V^\times \to V^\times, \quad (x,y) \mapsto \sigma_x(y) :=  \frac{Q_x(y)}{q(y)} =  \frac{b_q(y,x)}{q(y)} x -  \frac{q(x)}{q(y)} y .
$$
\end{enumerate}
All of these maps have well-defined  restriction  to spheres  $S=\{ x \in V \mid q(x) = r \}$ with $r \in \K^\times$, which thus become   sub-reflection spaces of $V^\times$.
For $j$, these restrictions to spheres are trivial, and for $s$ and $\sigma$ they coincide. 
\end{theorem}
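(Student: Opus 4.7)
The plan is to treat $s$ and $j$ as the basic building blocks, and deduce the properties of $\sigma$ via the factorization $\sigma_x = s_x \circ j_x = j_x \circ s_x$. I would first record four preliminary facts that will carry most of the work. (i) $s_x$ is an isometry of $q$ fixing $x$, by Lemma \ref{lemma:Q}. (ii) The formula $s_x(y) = b_q(y,x) x/q(x) - y$ shows $s_{\lambda x} = s_x$ for every $\lambda \in \K^\times$, so $s$ depends only on the radius of its index. (iii) $j_x$ acts as the scalar multiplication $y \mapsto (q(x)/q(y)) y$, hence fixes each sphere pointwise and permutes radii. (iv) The operators $j_x$ and $s_y$ commute for all $x,y \in V^\times$, because $s_y$ is linear and preserves $q$: $s_y(j_x(w)) = (q(x)/q(w)) s_y(w) = (q(x)/q(s_y(w))) s_y(w) = j_x(s_y(w))$. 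The factorizations $\sigma = s \circ j = j \circ s$ are then direct substitutions using $q(Q_x(y)) = q(x)^2 q(y)$ from Lemma \ref{lemma:Q}.

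For $s$, axioms (S1) and (S2) are exactly the content of Lemma \ref{lemma:Q}. For (S3) I would invoke the transport-of-structure principle: since $s_x$ is an isometry and $s_z$ depends only on $q$ and on $z$, one has $s_x s_y s_x^{-1} = s_{s_x(y)}$; combining with (S2) gives $s_x s_y s_x = s_{s_x(y)}$. For $j$, all three axioms reduce to immediate scalar computations using $q(\lambda y) = \lambda^2 q(y)$; in particular $q(j_x(y)) = q(x)^2/q(y)$.

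For $\sigma$, axiom (S1) follows from $Q_x(x) = q(x) x$, and (S2) is immediate from the commutation of $s_x$ and $j_x$: $\sigma_x^2 = s_x j_x s_x j_x = s_x^2 j_x^2 = \id$. For (S3) I would push all $j$'s past all $s$'s using observation (iv), obtaining
\[
\sigma_x \sigma_y \sigma_x = (s_x j_x)(s_y j_y)(s_x j_x) = (s_x s_y s_x) \circ (j_x j_y j_x) = s_{s_x(y)} \circ j_{j_x(y)}
\]
by (S3) for $s$ and $j$. It then suffices to check that $s_{s_x(y)} = s_{\sigma_x(y)}$ and $j_{j_x(y)} = j_{\sigma_x(y)}$. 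The first uses that $\sigma_x(y) = (q(x)/q(y)) s_x(y)$ is a $\K^\times$-multiple of $s_x(y)$, together with (ii). The second is the scalar identity $q(j_x(y)) = q(x)^2/q(y) = q(\sigma_x(y))$, already obtained.

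Finally, the restrictions to a sphere $S = q^{-1}(r)$ are transparent: $s_x$ preserves $S$ because it is an isometry; $\sigma_x$ preserves $S$ because $q(\sigma_x(y)) = r^2/r = r$ for $x,y \in S$; $j_x$ acts as the identity on $S$ since $q(x)=q(y)$ forces $j_x(y)=y$. The coincidence $s_x|_S = \sigma_x|_S$ comes from the same equality $q(x)=q(y)$, which gives $\sigma_x(y) = Q_x(y)/q(y) = Q_x(y)/q(x) = s_x(y)$. The main obstacle is axiom (S3) for $\sigma$: a direct derivation via $Q_x Q_y Q_x = Q_{Q_x(y)}$ would require the fundamental formula (\ref{eqn:Fufo}), which the text defers, so the cleanest path is the factorization argument above, whose delicate point is recognizing both that $s$ is insensitive to scaling of its index and that $j_z$ depends on $z$ only through $q(z)$.
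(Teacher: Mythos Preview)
Your proof is correct and largely parallels the paper's, with one genuine difference in the treatment of (S3) for $\sigma$. For $s$ and $j$ you argue exactly as the paper does: (S1), (S2) for $s$ come from Lemma \ref{lemma:Q}, (S3) for $s$ from the transport-of-structure principle for isometries, and all three axioms for $j$ from direct scalar calculations. The paper, however, then notes that (S3) for $s$, rescaled, \emph{is} the fundamental formula $Q_x Q_y Q_x = Q_{Q_x y}$ for $x,y \in V^\times$, and it invokes that identity (together with homogeneity) to obtain (S3) for $\sigma$. Your route avoids naming the fundamental formula altogether: you observe the stronger commutation $s_y \circ j_x = j_x \circ s_y$ for \emph{all} $x,y$ (the paper only records the case $x=y$), push all $j$'s past all $s$'s in $\sigma_x \sigma_y \sigma_x$, and then identify the indices via $s_{\lambda z} = s_z$ and $j_z = j_{z'}$ whenever $q(z)=q(z')$. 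This is a clean self-contained argument; the paper's version has the compensating virtue of exhibiting the fundamental formula (on $V^\times$) as a corollary of the reflection-space axioms, a fact used later in the text. Your closing remark that the fundamental formula is ``deferred'' is therefore slightly off: it is in fact derived on the spot, inside this very proof, from (S3) for $s$.
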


\begin{proof} 
First of all, note homogenity and degrees:

$Q_x y$ : quadratic in $x$, linear in $y$,

$s_x y$ : $0$ in $x$, linear in $y$,

$j_x (y)$: quadratic in $x$, $-1$ in $y$,

$\sigma_x(y)$: quadratic in $x$, $-1$ in $y$,

\nin and the fixed point spaces:

$s_x$ fixes pointwise the whole line $\K x$, since it is linear in $y$

$j_x$ fixes pointwise the whole sphere $\{ u \in V \mid q(u) = q(x)\}$, 

$\sigma_x = s_x j_x = j_x s_x$ fixes $x$, which appears to be an ``isolated'' fixed point.

\ssk
\nin
Concerning $s$, properties (S1), (S2) are contained in
Lemma \ref{lemma:Q}, and (S3) follows from that lemma,
observing that 
$g \circ s_y  \circ g^{-1} = s_{g(y)}$ for every $q$-isometry $g$, and in particular
for $g= s_x$.
Note that, by rescaling with invertible scalars, this gives us the ``fundamental formula''
(\ref{eqn:Fufo}), for $x,y \in V^\times$. 

Concerning
$j$,  properties (S1), (S2), (S3) follow by direct computation, only using that
$q$ is homogeneous of degree $2$.

Concerning $\sigma$,
 by direct computation $j_x \circ s_x = \sigma_x = s_x \circ j_x$, whence
 (S1) and (S2). 
  (S3) follows from (\ref{eqn:Fufo}), together with the  homogenity
 observed above.
 
Finally, $q(x)=r=q(y)$ implies that $q(s_x y) = q(y) = r$, etc., so spheres are stable under
the three binary products. Clearly, the restriction of $j$ to a sphere is trivial:
 $j_x(y) = y$  if $q(x)=q(y)=r$. 
On the other hand, the set of multiples of an element
 $v \in V^\times$ is also stable under the product, and
then $s$ restricts to the trivial structure, and the restriction of  $j$ corresponds to the symmetric space
structure $(r,s) \mapsto r s^{-1} r = r^2 s^{-1}$ on $\K^\times$. 
\end{proof}

\begin{remark}
The three reflection space structures behave very much like those of a {\em symmetric bundle}
in the sense of \cite{BeD}.
More precisely, the quotient of $V^\times$ by the equivalence relation
$v \sim w$ iff $\exists \lambda \in \K^\times: w = \lambda v$, becomes a symmetric space,
and $V^\times$ becomes a symmetric bundle over the quotient; the fibers are multiplicatively
written, rather than additively as in \cite{BeD}.
\end{remark} 

\subsection{The set of root vectors.}\label{sec:roots1}
The following remarks will not be used in the main text.
For every quadratic space $(V,q)$ over $\K$, we define it {\em set of root vectors}
\begin{equation}
V^\diamond := \{ 
y \in V \mid \, q(y) \not= 0, \,
\forall x \in V : \exists n = n_{x,y} \in \K :
b_q(x,y) = n_{y,x} \cdot q(y) \} .
\end{equation}
Clearly, $V^\times \subset V^\diamond$ (equality if $\K$ is a field), and for $y \in V^\times$,
we then have
\begin{equation}
n_{y,x} = \frac{b_q(x,y)}{q(y)}  = 2  \frac{b_q(x,y)}{b_q(y,y)}  ,
\end{equation}
which is uniquely determined, and then we have
\begin{equation}
s_x(y) = n_{x,y} x - y.
\end{equation}
More generally, $n_{x,y}$ is uniquely determined when the $\K$-module has no torsion.
For instance, this holds if $V$ is a free module over an integral domain, like $\Z$ --
this is the most interesting case, where this definition takes up the usual ones from the
theory of {\em root systems}.
We will not use it in the sequel, but see Subsection \ref{sec:roots2}.
Let us just note here that, unlike $V^\times$, the set  $V^\diamond$ does not depend on
scaling of the form: one may replace $q$ by $\lambda q$ for any invertible $\lambda$,
and that the product $s$ extends to a map
\begin{equation}
V^\diamond \times V^\diamond \to V^\diamond, \quad
(x,y) \mapsto s_x(y) = n_{x,y} x - y
\end{equation}
that turns $V^\diamond$ again into a reflection space.
The group generated by all $s_x$ with $x \in V^\diamond$ could be called the
{\em Weyl group of the quadratic form $q$}.

\begin{example}\label{ex:rootcase}
Let $\K = \Z$, $V = \Z^2$, and $q(x) = \alpha x_1^2 + \beta x_1 x_2 +\gamma x_2^2$.
Then $e_1$ is a root vector iff $\frac{\beta}{\alpha} \in \Z$ and $e_2$ is one iff
$\frac{\beta}{\gamma}\in \Z$. 
If the form is positive definite, the root vectors form a root system in the usual sense.
For instance, let
$q(x) = x_1^2 - x_1 x_2 + x_2^2$.
Then the unit sphere agrees with $V^\times$ and
 has $6$ elements $\{ \pm e_1,\pm e_2 , \pm(e_1 + e_2) \}$ (summits of regular hexagon),
 whereas $V^\diamond$ has $12$ elements, namely the 
 \href{https://en.wikipedia.org/wiki/G2_(mathematics)#/media/File:Root_system_G2.svg}{$12$ root vectors of the root system $G_2$}.
\end{example}

\section{All circles are (commutative) torsors}\label{chap:2D}

\subsection{The $2D$-Theorem.}
In general, existence of group structures on spheres is a very restrictive condition.
Therefore it is remarkable that
the following result holds for {\em any binary} quadratic form, even without assuming
that $V^\times$ is non-empty. However, if invertible elements exist, the proof is simpler and better
intelligible, so in the proof  we concentrate
on that case. 

\begin{theorem}\label{th:circles}\label{th:ternary}
Let $V = \K^2$ with quadratic form
$q(x) = \alpha x_1^2 + \beta x_1 x_2 + \gamma x_2^2$.
\begin{enumerate}
\item
There exists a unique  trilinear map
$V^3 \to V$, $(x,y,z)\mapsto \langle xyz \rangle$ satisfying the Kirmse identity
$\langle xxy \rangle = q(x)y = \langle yxx \rangle$.
\item
This trilinear map is commutative,  and para-associative.
\item
It satisfies the ternary composition rule  $q(\langle xyz\rangle)=q(x)q(y)q(z)$.
\end{enumerate}
Summing up,
$V$ is canonically a commutative group spherical space, and
every circle $S= \{ x \in V \mid q(x) = c \}$ (with $c$ invertible in $\K$) carries a canonical
torsor structure, which is commutative and given by
$$
(x,y,z) \mapsto \frac{1}{c} \langle xyz \rangle.
$$
\end{theorem}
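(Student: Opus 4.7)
My plan is in four steps: existence, uniqueness, commutativity, then (PA) together with (TC). The key structural input is that on a free rank-two module every fully alternating trilinear map to any target vanishes, which upgrades a weak existence lemma into a rigidity statement.

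First, for existence I would invoke Lemma~\ref{la:existence} with the bilinear form $b(x,y) := \alpha x_1 y_1 + \beta x_1 y_2 + \gamma x_2 y_2$ from Remark~\ref{rk:convention}, obtaining the trilinear product $\langle xyz\rangle := b(x,y)z - b(x,z)y + b(y,z)x$; it satisfies (K) by construction. Second, for uniqueness, let $D$ denote the difference of two Kirmse-compatible trilinear maps. The identities $D(x,x,y) = 0 = D(y,x,x)$ polarize to antisymmetry in the first two and in the last two arguments, so $D$ is fully alternating. On $V = \K^2$, each value $D(e_i,e_j,e_k)$ with $i,j,k \in \{1,2\}$ has two equal indices, hence vanishes; therefore $D \equiv 0$. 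Commutativity then drops out for free: the trilinear map $(x,y,z)\mapsto\langle zyx\rangle$ also satisfies (K), hence coincides with $\langle xyz\rangle$ by the just-proved uniqueness.

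For (PA) and (TC) my plan is to reduce them to universal polynomial identities with integer coefficients. I would work over the integral domain $R := \Z[\alpha,\beta,\gamma]$ (treating the coordinates of the vector arguments as further indeterminates) and then localize to $R' := R[\tfrac{1}{2d}]$, where $d := 4\alpha\gamma - \beta^2$ is the discriminant, so that $(V,q)$ becomes a non-degenerate binary quadratic space over $R'$. Classical composition-algebra theory then supplies a commutative, associative, unital $R'$-algebra structure on $V$ with a conjugation $y \mapsto \bar y$ satisfying $y\bar y = q(y)$; the trilinear map $(x,y,z)\mapsto x\bar y z$ is a Kirmse-compatible product, so the uniqueness of the previous step, applied after base change to $R'$, identifies it with $\langle xyz\rangle$. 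In this guise, (PA) is just the associativity-with-commutativity of the binarion product, and (TC) is the multiplicativity of its norm. Both identities are thus established over $R'$, descend to $R$ by injectivity of $R \hookrightarrow R'$, and specialize to any base ring $\K$.

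The torsor claim on a non-empty sphere $S = q^{-1}(c)$ is then immediate: (TC) gives $q\bigl(\tfrac{1}{c}\langle xyz\rangle\bigr) = c^{-2}\cdot c^3 = c$, so $S$ is stable under the rescaled product; (K) rescales to the torsor idempotency $\tfrac{1}{c}\langle xxy\rangle = y$; (PA) rescales to (PA) on $S$; and commutativity restricts directly. The main obstacle I anticipate lies in the last step, specifically in arranging the binarion structure over $R'$ when $\alpha$ or $\gamma$ may fail to be invertible there (one may need to invert one further element, or to argue via a hyperbolic model in the split case) and in formalizing the ``generic polynomial descent''. An alternative that avoids any appeal to the classical theory is a direct multilinear expansion of (PA) and (TC) in the basis using the explicit formula of Step~1: entirely elementary, but substantially longer.
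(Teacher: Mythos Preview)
Your proposal is correct and the overall strategy is sound, but the route differs from the paper's in two places.

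For uniqueness and commutativity, the paper computes the eight values $\langle e_i e_j e_k\rangle$ directly (pigeonhole forces two equal indices, so Kirmse determines each one) and then reads commutativity off from the resulting table. Your argument is the abstract version of the same pigeonhole: the difference of two Kirmse products is alternating, hence vanishes on a rank-two module, and commutativity then follows by applying uniqueness to the reversed product. Your formulation is cleaner and makes the role of dimension~$2$ more transparent; the paper's version has the advantage of producing the explicit formula~(\ref{eqn:ternary}) along the way.

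For (PA) and (TC), the paper proceeds in two regimes. When $V^\times\neq\emptyset$ it fixes $e\in V^\times$, shows by a short table computation that the homotope $x\cdot_e z=\tfrac{1}{q(e)}\langle xez\rangle$ is a commutative associative algebra with scalar involution $\sharp$, and then uses uniqueness to identify $\langle xyz\rangle$ with $q(e)\,xy^\sharp z$, from which (PA) and (TC) are immediate. For the general case (no invertible elements) it falls back on a direct verification of (PA) on basis quintuples (Table~\ref{table:products2}) and, for (TC), on Lemma~\ref{la:composition} together with an embedding into the Clifford--quaternion algebra. Your generic-specialization argument replaces this second regime entirely: you reduce everything to the first regime over a localization of $\Z[\alpha,\beta,\gamma]$ and then descend the polynomial identities. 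This is more conceptual and avoids both the quintuple table and the Clifford detour; the paper's approach, in exchange, is fully self-contained and never leaves the given base ring.

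One small sharpening of your localization step: rather than inverting $2d$ and then appealing to ``classical composition-algebra theory'' (which over rings still needs a norm-invertible base point), it is simpler to invert $\alpha$ alone. Over $R''=\Z[\alpha,\beta,\gamma,\alpha^{-1}]$ the element $e_1$ has invertible norm, and the paper's own homotope argument (which needs neither $2$ nor the discriminant to be invertible) runs verbatim to give the binarion structure. Since $\alpha$ is a nonzero element of the integral domain $\Z[\alpha,\beta,\gamma]$, the embedding into $R''$ is injective and your descent goes through unchanged.
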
 

\begin{proof}
1. The existence statement holds for any quadratic space (Lemma \ref{la:existence}).
Let us prove uniqueness, in  case of dimension 2. 
Denote the standard basis of $\K^2$ by $(e_1,e_2)$. 
We have already noticed (Lemma \ref{la:existence}) that the values
$\langle xyx \rangle$ are uniquely determined by $q$. 
To compute 
$\langle e_i e_j e_k\rangle $, observe that always the values of two of the three
indices $i,j,k$ coincide, 
 so from $q$-compatibility  we obtain the values shown in  Table \ref{table:products}.
\begin{table}[h]\caption{Products of triples of basis vectors}\label{table:products}
\begin{center}
\begin{tabular}{|*{10}{c|}}
\hline
$i$ & $j$ & $k$  & $\langle  e_i \vert e_j \vert e_k \rangle $
\\
\hline
1 & 1 & 1 & $\alpha e_1$ 
\\
\hline
2 & 2 & 2 & $\gamma e_2$
\\
\hline
1 & 1 & 2 & $\alpha e_2$ 
\\
1 & 2 & 1 &  $\beta e_1 - \alpha e_2$ 
\\
2 & 1 & 1 & $\alpha e_2 $
\\
\hline
1 & 2 & 2 & $\gamma e_1$ 
\\
2 & 1 & 2 & $\beta e_2 - \gamma e_1$
\\
2 & 2 & 1 & $\gamma e_1$
\\
\hline
\end{tabular}
\end{center}
\end{table}
Uniqueness now follows:
by trilinearity,  the ternary product is given by 
\begin{equation}\label{eqn:ternary0} 
\langle  xyz \rangle  = \sum_{(i,j,k) \in \{ 1,2 \}^3} x_i y_j z_k  \,  \langle e_i e_j e_k \rangle  . 
\end{equation}
We collect and add terms belonging to indices $(i,j,k)$ such that
$ \langle e_i e_j e_k \rangle $ is a multiple of $e_1$ (first component), and then such that
$\langle e_i e_j e_k \rangle $ is a multiple of $e_2$ (second component).
Using table \ref{table:products}, the result is 
\begin{equation}\label{eqn:ternary}
\langle  xyz \rangle  = 
\begin{pmatrix}
\alpha x_1 y_1 z_1 + \beta x_1 y_2 z_1 + \gamma (x_1 y_2 z_2 + x_2 y_2 z_1 - x_2 y_1 z_2) 
\\
\gamma x_2 y_2 z_2 + \beta x_2 y_1 z_2  + \alpha (x_1 y_1 z_2 + x_2 y_1 z_1 - x_1 y_2 z_1)
\end{pmatrix} .
\end{equation} 
Thus the ternary map is uniquely determined by $q$-compatibility.

\begin{remark}\label{rk:uniqueness}\label{rk:dreier}
 It follows that the left hand side in (\ref{eqn:drei}) 
is independent of the choice of $b$ such that $q(x)=b(x,x)$. In particular, for $q=0$, 
$b$ is alternating, so it is a multiple of $\det$, we get an identity
$[x,y]z + [y,z]x+[z,x]y = 0$ called ``Dreier-Identit\"at'' in \cite{KK}.
\end{remark}

2. Commutativity (symmetry in $i$ and $k$) is clear from the table.
Let us prove para-associativity.
Assuming the theory of Clifford algebras, a short proof is indicated in
Subsection \ref{sec:Clifford1}.
In the following, we will give an elementary and direct proof, independent of the
theory of Clifford algebras.

Assume first that  $(V,q)$ admits invertible elements.  
Fix an invertible element $e$ and take $e_1 := e$ as first basis vector, completed
by a second vector $e_2$ to a basis of $V \cong \K^2$.
Consider the bilinear product 
\begin{equation}
x \cdot y := \langle x e y \rangle .
\end{equation}
From the table above, we deduce the following  binary  ``multiplication table''
\begin{center}
\begin{tabular}{|*{10}{c|}}
\hline
$\cdot_e$ & $e_1$ & $e_2$ \\
\hline
$e_1$ & $\alpha e_1$ & $ \alpha e_2$ \\
\hline
$e_2$ & $\alpha e_2$ & $ \beta e_2 - \gamma e_1$   \\
\hline
\end{tabular}
\end{center}
Clearly, this product is commutative, and it is  associative:
note first that $e_1$ is a multiple of the unit, and hence
we have $(e_i e_j)e_k = e_i (e_j e_k)$ when one of the indices $i,j,k$ equals $1$.
 And $e_2^2 e_2 = e_2 e_2^2$ by commutativity of the binary product, so the product is associative
(i.e, every unital $2$-dimensional commutative algebra is automatically associative.)

The reflection $\sharp = s_e$ defined in Lemma \ref{lemma:Q} 
is an isometry of order $2$  fixing $e$. 
Since the trilinear product is determined by $q$, the map $\sharp$
 is also an automorphism of this structure, and
since it fixes $e$, it is an automorphism of the bilinear product.
Now we define a trilinear product 
$$
A: V^3 \to V, \quad 
(x,y,z) \mapsto A(x,y,z):=  \frac{1}{q(e)} xy^\sharp z .
$$
Since the binary product is associative, and $\sharp$ an involution,
this ternary product is para-associative:
$((ab^\sharp c)d^\sharp e) = (a (d c^\sharp b)^\sharp e) = (ab^\sharp (cd^\sharp e))$.
Let us show that $xx^\sharp = q(x) e_1$:
\begin{align*}
x  x^\sharp & = 
(x_1 e_1 + x_2 e_2)   (x_1 e_1^\sharp + x_2 e_2^\sharp) 
\\
&=
(x_1 e_1 + x_2 e_2)   (x_1 e_1  + x_2 (\frac{\beta}{\alpha} e_1 - e_2)) 
\\ 
&=
(x_1 e_1 + x_2 e_2)   ((x_1 +\frac{\beta}{\alpha})  e_1  - x_2 e_2))
\\
& = (\alpha x_1^2 + \beta x_1 x_2 +\gamma x_2^2) e_1 + 0 e_2  = q(x) e_1  
\end{align*}
(using the ``multiplication  table''; all terms containing $e_2$ cancel out).
From this, we get $A(x,x,y) = \frac{1}{\alpha} xx^\sharp y = q(x) \frac{e_1 y}{\alpha} = q(x)y = A(y,x,x)$, i.e.,
$q$-compatibility of the ternary product map $A$. 
By uniqueness, we thus have
$A(x,y,z) = \langle xyz \rangle$.
Since $A$ is  para-associative, so is $\langle -,-,-\rangle$.
Similarly, using commutativity and associativity,  we get the  ternary composition law
$$
q(\langle xyz \rangle) = q(xy^\sharp z) = (xy^\sharp z) (xy^\sharp z)^\sharp =
xx^\sharp yy^\sharp zz^\sharp = q(x) q(y) q(z),
$$
finishing the proof in case $(V,q)$ admits invertible elements.
 
Now let us drop the assumption that $(V,q)$ contains invertible elements, and indicate
the proof in this more general case (without going into all of the details). 
To prove associativity,
by multilinearity, it is enough to show the associativity property for the base vectors, that is,
$$
\langle  \langle  e_i e_j e_k) e_\ell e_m \rangle  =
\langle  e_i \langle  e_j e_k e_\ell \rangle   e_m \rangle  =
\langle  e_i e_j \langle  e_k e_\ell e_m \rangle \rangle  
$$
for all $(i,j,k,\ell,m) \in \{ 1,2\}^5$. 
By commutativity,
the $2^5$ cases to be checked reduce effectively to the following 10 
cases: 
\begin{enumerate}
\item
$(i,i,i,i,i)$ (the values of all $5$ indices coincide, with $i=1$ or $2$),
\item
(the values of exactly $4$ indices coincide): types
$(j iiii)$ ,
$(ijiii)$ , 
$(iijii)$

\item
(the values of exactly $3$ indices coincide): types
$(jj iii)$ ,
$(jijii)$ ,
$(jiiji)$ ,
$(jiiij)$ , 
$(ijjii)$ ,
$(ijiji)$
\end{enumerate}
Using  Table \ref{table:products}, the computation is straightforward, and indeed in each case
we get the same result for all three ways of bracketing, which we denote by
$\langle e_i e_j e_k e_\ell  e_m\rangle$. 
For better readability we represent the cases given above by definite choices for $i$ and $j$.
The result is given in Table \ref{table:products2}. 
\begin{table}[h]\caption{Products of $5$-tuples of basis vectors}\label{table:products2}
\begin{center}
\begin{tabular}{|*{10}{c|}}
\hline
$i$ & $j$ & $k$  & $\ell$ & $m$  & $\langle e_i e_j e_k e_\ell  e_m\rangle$
\\
\hline
1 & 1 & 1 & 1 & 1 &  $\gamma  e_1$ 
\\
\hline
2 & 1 & 1 & 1 & 1 & $\alpha^2 e_2$
\\
1 & 2 & 1 & 1 & 1 & $\alpha \beta e_1 - \alpha \gamma e_2$
\\
1 & 1 & 2 & 1 & 1 & $\alpha^2 e_2$
\\
\hline
2 & 2 & 1 & 1 & 1 & $\alpha \gamma  e_1$
\\
2 & 1 & 2 & 1 & 1 & $\alpha \beta e_2 - \alpha \gamma e_1$
\\
2 & 1 & 1 & 2 & 1 & $\alpha \gamma e_1$
\\
2 & 1 & 1 & 1 & 2 & $\alpha \beta e_2 - \alpha \gamma e_1$
\\
1 & 2 & 2 & 1 & 1 & $\alpha \gamma  e_1$
\\
1 & 2 & 1 & 2 & 1 & $(\beta^2 - \alpha \gamma) e_1 - \beta \alpha e_2$
\\
\hline
\end{tabular}
\end{center}
\end{table}
Finally,  the proof of the  ternary 
composition law by ``brute force computation''
would be fairly long and involved. 
In the special case where  $a,b$ are linearly independent,
the identity $q(\langle axb \rangle) = q(a)q(b) q(x)$ follows from
 the following Lemma \ref{la:composition}:

\begin{lemma}\label{la:composition}
Assume  $a,b$ are linearly independent in $\K^2$, and $q$ is a binary quadratic form.
Then the  unique linear map $S = S_{a,b}:\K^2 \to \K^2$
such that
$S_{a,b}(a) = q(a)b$ and $ S_{a,b}(b) = q(b)a$
is a $q$-similarity with ratio $q(a) q(b)$, that is,
for all $x \in \K^2$, we have
$$
q ( S_{a,b} (x)) = q(a) q(b) q(x) .
$$
In particular, when $q(a) q(b) = 1$, then $S_{a,b}$ is an isometry.
\end{lemma}

\begin{proof}
Let $x = u_1 a + u_2 b$, so
$q(x) = u_1^2 q(a) + u_1 u_2 b_q(a,b) + u_2^2 q(b)$,
and
\begin{align*}
q\bigl(S_{a,b} x) & = q( u_1 q(a) b + u_2 q(b) a \bigr) \\
& = u_1^2 q(a)^2 q(b) + u_1 q(a) u_2 q(b) b_q(a,b) + u_2^2 q(b)^2 q(a) \\
& = q(a) q(b) \cdot  \bigl(  u_1^2 q(a) + u_1 u_2 b_q(a,b) + u_2^2 q(b) \bigr) =
q(a) q(b) q(x) ,
\end{align*}
whence the claim. 
\end{proof}

\nin Likewise,
when  $a=b$,  the identity $q(\langle axa \rangle) = q(a)^2 q(x)$ follows from Lemma \ref{lemma:Q}.

For a general proof of the ternary composition rule,
 one may inject $\K^2$ into 
its  ``concrete Clifford-quaternion algebra''  $\bfH_q$, 
(see Subsections \ref{sec:Clifford1} and \ref{sec:Clifford2}),
which is a composition algebra {\em with} invertible elements (namely, it has
a unit element); then we can apply the general arguments to be developed
in the next section (Theorem \ref{th:binary-ternary}, Item 4), and then restrict to $\K^2$ again. 

The last statements of the theorem are consequences of the preceding results, see Theorem \ref{th:torsortheorem}.
\end{proof}

\subsection{Examples, and matrix realization.}

\begin{example}\label{ex:hyp} 
Assume the form $q$ is hyperbolic. Without loss of generality, assume that
$q(x) = x_1 x_2$. 
We claim that  then  the unique trilinear map is given by
\begin{equation}\label{eqn:hyperbolic}
\langle a x b \rangle  = \begin{pmatrix}
a_1 x_2 b_1 \\
a_2 x_1 b_2 \end{pmatrix} . 
\end{equation}
Indeed, it
is $q$-compatible:
 for $x=a$, the right hand side gives $a_1 a_2 b = q(a)b$ and for $x=b$ it gives
$b_1 b_2 a = q(b)a$, as it should, whence the claim.
In this case,  it follows directly from the explicit formula 
(\ref{eqn:hyperbolic}) that the ternary
product $\langle a x b \rangle$ is {\em associative} and {\em commutative}, 
without using the general result. 
For $\K = \R$ or $\bC$, since the set of hyperbolic forms is open in the space of all forms,
this example could be used to deduce general results ``by polynomial density''.
\end{example}

\begin{example}\label{ex:product}
Generalizing the preceding example, let us assume that
$q(x) = \phi(x) \psi(x)$ where $\phi$ and $\psi$ are two linear forms.
The bilinear form 
$b(x,y) = \phi(x) \psi(y)$ satisfies
$q(x)=b(x,x)$, and from (\ref{eqn:drei}) we get
$$
\langle x \vert y \vert z \rangle  = \phi(x) \psi(y) z + \psi (z) \phi(y) x - \phi(x) \psi(z) y.
$$
When $\phi$ and $\psi$ are linearly independent in the dual of $\K^2$, then 
$(\K^2,q)$ is a \href{https://en.wikipedia.org/wiki/Minkowski_plane}{\em hyperbolic (Minkowski) plane}: by a change of coordinates, we are back in the preceding example.
When $\phi = \psi$, then we obtain a
\href{https://en.wikipedia.org/wiki/Laguerre_plane}{\em Laguerre plane}. 
We'll see (Theorem \ref{th:Minko})
 that such formulae define  a group spherical  space on {\em any} $\K$-module, which
than can be considered as {\em extended Minkowski (resp.\ Laguerre) planes}.
\end{example}
 
\subsection{Matrix realization: the dihedral algebra of $q$.} 
In the following, let
 $q(x)= \alpha x_1^2 + \beta x_1 x_2 +\gamma x_2^2$ be an arbitrary  binary quadratic form on $V=\K^2$.
 We'll give explicit formulae, showing that the whole theory generalizes usual formulae
known from the Euclidean case $\alpha = 1 = \gamma, \beta=0$, describing relations
valid for the linear operators, called {\em spirations} and {\em spiflections},
see Eqn.\ (\ref{eqn:ABC}).
These operators can be identified with $2 \times 2$-matrices, which we will give explicitly,  by
rewriting  (\ref{eqn:ternary}) as follows. 
From the second equality one can read off the matrix of $R_{x,y}$ and from the third
equality, the matrix of $S_{x,z}$ (recall notation $[x,y] = x_1 y_2 - x_2 y_1$),
\begin{align*}\label{eqn:ternarybis}
\langle x\vert y\vert z \rangle &  = 
\begin{pmatrix}
\alpha x_1 y_1 z_1 + \beta x_1 y_2 z_1 + \gamma (x_1 y_2 z_2 + x_2 y_2 z_1 - x_2 y_1 z_2) 
\\
\gamma x_2 y_2 z_2 + \beta x_2 y_1 z_2  + \alpha (x_1 y_1 z_2 + x_2 y_1 z_1 - x_1 y_2 z_1)
\end{pmatrix}
\\
& = \begin{pmatrix}
\alpha x_1 y_1 + \beta x_1 y_2 + \gamma x_2 y_2 & \gamma [x,y] \\
\alpha[y,x] & \alpha x_1 y_1 +\gamma x_2 y_2 + \beta x_2 y_1 \end{pmatrix}
\begin{pmatrix} z_1 \\ z_2 \end{pmatrix}
\\
& =
\begin{pmatrix}
\alpha x_1 z_1 - \gamma x_2 z_2 & \beta x_1 z_1 + \gamma (x_1 z_2 + x_2 z_1) \\
\beta x_2 z_2 + \alpha (x_1 z_2 + x_2 z_1) &  \gamma x_2 z_2 - \alpha x_1 z_1 
\end{pmatrix}
\begin{pmatrix} y_1 \\ y_2 \end{pmatrix}
\end{align*}

\begin{Definition}\label{def:dihedral}
The  {\em (right) spiration algebra} $\bfC_q^R$ is the submodule of $M(2,2;\K)$
generated by all
$R_{x,y}$, $x,y \in \K^2$, and by $\id_{\K^2}$.
The
{\em spiflection algebra} $\bfC_q^S$ is the submodule of $M(2,2,;\K)$ generated by
all $S_{x,z}$, $x,z \in \K^2$.
The {\em dihedral algebra of $q$} is the submodule $\bfD_q := \bfC_q^R + \bfC_q^S$ of
$M(2,2;\K)$.
\end{Definition}

\nin
To describe these algebras by generators and relations, let us abbreviate
\begin{equation}
R_{ij}:= R_{e_i,e_j}, \qquad S_{ij}:= S_{e_i,e_j} , \qquad I := 2 \times 2 \mbox{ unit matrix}.
\end{equation}
From Table \ref{table:products}, or from the matrices given above, one can read off:
\begin{equation}\label{eqn:R-matrix} 
R_{12} = \begin{pmatrix} \beta & \gamma \\ - \alpha & 0 \end{pmatrix},
\quad
R_{21} = \begin{pmatrix} 0 & - \gamma \\ \alpha & \beta \end{pmatrix} ,
\quad
R_{11} = \alpha I, \quad  R_{22} = \gamma I,
\end{equation}
\begin{equation}
\label{eqn:S-matrix} 
S_{12} = \begin{pmatrix} 0 & \gamma \\ \alpha & 0 \end{pmatrix} = S_{21}, \quad
S_{11} = \begin{pmatrix} \alpha &  \beta \\ 0 & - \alpha \end{pmatrix}, \quad
S_{22} = \begin{pmatrix} -\gamma & 0 \\ \beta & \gamma \end{pmatrix} .
\end{equation}

\begin{theorem}\label{th:spiration2} 
Assume $(\K^2,q)$ with $q(x)= \alpha x_1^2 + \beta x_1 x_2 +\gamma x_2^2$
 is a binary quadratic space, und use notation introduced above.
Then, for all $a,b \in \K^2$, 
\begin{align*}
\tr (R_{a,b}) & = b_q(a,b) ,
\\
\det (R_{a,b}) & = q(a) q(b),
\\
\tr (S_{a,b}) & = 0 ,
\\
\det (S_{a,b}) & = - q(a) q(b),
\\
R_{a,b}^2 & = b_q(a,b) R_{a,b} - q(a) q(b) \id,
\\
S_{a,b}^2 & = q(a) q(b) \id.
\end{align*}
The adjugate matrix of $R_{a,b}$ is $(R_{a,b})^\sharp = R_{b,a}$, and the one of $S_{a,b}$ is $-S_{a,b}$.
In particular, the spiration algebra $\bfC_q^R$ is  stable under its involution ``adjugate'',
and on $\bfC_q^S$ the ``adjugate'' map agrees with $-\id$. 
\end{theorem}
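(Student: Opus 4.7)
The plan is to compute everything directly from the explicit $2\times 2$ matrix representations of $R_{a,b}$ and $S_{a,b}$ displayed in the rewriting of (\ref{eqn:ternary}) just above Definition \ref{def:dihedral}. Once $\tr$ and $\det$ are in hand, the quadratic identities for $R_{a,b}^2$ and $S_{a,b}^2$ follow automatically from the Cayley--Hamilton theorem for $2\times 2$ matrices over any commutative ring, namely $M^2 = \tr(M) M - \det(M) \id$, and the adjugate statements follow from the companion identity $M^\sharp = \tr(M)\id - M$.

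For $R_{a,b}$ the diagonal entries are $\alpha a_1 b_1 + \beta a_1 b_2 + \gamma a_2 b_2$ and $\alpha a_1 b_1 + \gamma a_2 b_2 + \beta a_2 b_1$, and the off-diagonal entries are $\gamma[a,b]$ and $\alpha[b,a]$. Summing the diagonal entries yields
\[
2\alpha a_1 b_1 + \beta(a_1 b_2 + a_2 b_1) + 2\gamma a_2 b_2,
\]
which is exactly $b_q(a,b)$ by polarization of (\ref{eqn:binaryq}). The determinant is the product of the diagonal entries plus $\alpha\gamma[a,b]^2$; expanding and regrouping by monomials in $\alpha,\beta,\gamma$ one recovers term by term the nine-term expansion of $q(a)q(b)$. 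The only subtlety in this bookkeeping concerns the $\alpha\gamma$-cross-terms: the diagonal product contributes $+2\alpha\gamma a_1 a_2 b_1 b_2$, while the square $\alpha\gamma[a,b]^2$ contributes $-2\alpha\gamma a_1 a_2 b_1 b_2 + \alpha\gamma(a_1^2 b_2^2 + a_2^2 b_1^2)$, and the net effect reproduces exactly the $\alpha\gamma$-part of $q(a)q(b)$. This careful matching of coefficients is the main, though routine, obstacle. For $S_{a,b}$ the displayed matrix has diagonal entries that are negatives of one another, giving $\tr(S_{a,b})=0$ immediately, and a shorter expansion yields $\det(S_{a,b})=-q(a)q(b)$.

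Applying Cayley--Hamilton now produces $R_{a,b}^2 = b_q(a,b) R_{a,b} - q(a)q(b)\id$ and $S_{a,b}^2 = q(a)q(b)\id$ at a single stroke. For the adjugates, the $2\times 2$ identity $M^\sharp = \tr(M)\id - M$ makes the checks transparent: in $R_{a,b}^\sharp$ the diagonal entries are swapped, exchanging the roles of $a$ and $b$, while the off-diagonal entries are negated, using $-\gamma[a,b]=\gamma[b,a]$ and $-\alpha[b,a]=\alpha[a,b]$; the result matches the explicit matrix of $R_{b,a}$. For $S_{a,b}$, since the diagonal entries are already mutual negatives and the off-diagonal entries flip sign under the adjugate, the result is precisely $-S_{a,b}$. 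Finally, since $M\mapsto M^\sharp$ is $\K$-linear on $2\times 2$ matrices, the stability of $\bfC_q^R$ under $\sharp$ and the identity $\sharp = -\id$ on $\bfC_q^S$ follow from the definitions of $\bfC_q^R$ and $\bfC_q^S$ as $\K$-linear spans of the generating operators (together with $\id^\sharp = \id$ for the unit in $\bfC_q^R$).
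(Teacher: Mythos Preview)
Your proof is correct, but it takes a somewhat different route from the paper's. The paper first establishes the quadratic identity $R_{a,b}^2 = b_q(a,b)R_{a,b} - q(a)q(b)\id$ \emph{structurally}, using para-associativity of the trilinear product (namely $R_{a,b}^2 = R_{\langle aba\rangle,b}$ together with $\langle aba\rangle = b_q(a,b)a - q(a)b$), and likewise $S_{a,b}S_{b,a} = q(a)q(b)\id$. Only then does it compute the trace, by bilinearity reducing to the four basis cases $R_{e_i,e_j}$; finally it compares the structural quadratic relation with Cayley--Hamilton to read off the determinant. For the adjugate $R_{a,b}^\sharp = R_{b,a}$ the paper invokes the polarized identity $R_{a,b}+R_{b,a}=b_q(a,b)\id$ rather than a direct entry-by-entry check.

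You run Cayley--Hamilton in the opposite direction: you compute $\tr$ and $\det$ directly from the explicit matrix for general $a,b$, and then let Cayley--Hamilton produce the quadratic identities. This is more self-contained, since it does not rely on Theorem~\ref{th:circles}(2) (para-associativity); the price is the nine-term $\det$ expansion you sketch. The paper's route is shorter and highlights that the quadratic relations are a shadow of the para-associative law, whereas your route shows that the matrix identities already hold at the level of pure linear algebra, independently of the deeper torsor structure.
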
 

\begin{proof} 
Using para-associativity, we have
\begin{align*}
R_{a,b}^2  & = R_{\langle aba \rangle, b}   = R_{ b_q(a,b) a,b} - R_{q(a)b,b}  = 
b_q(a,b) R_{a,b} - q(a) q(b) \id 
\\
 S_{a,b} S_{b,a}& = R_{a,\langle abb\rangle} = q(b) R_{a,a} = q(b) q(a) \id . 
\end{align*}
To compute the trace of $R_{a,b}$, it suffices to do it for $(a,b)= (e_i,e_j)$,
$i,j=1,2$, i.e., for 
 the $4$ matrices (\ref{eqn:R-matrix}). 
 In all 4 cases, the trace is $b_q(a,b)$, whence $\tr R_{a,b} = b_q(a,b)$. 
Now, every endomorphism $X \in \End(\K^2)$ satisfies the characteristic equation
$$
X^2 - \tr(X) X + \det(X) \id  = 0 .
$$
Let $X = R_{a,b}$ and compare with the relation $R_{a,b}^2 =
b_q(a,b) R_{a,b} - q(a) q(b) \id$: it follows that
$\det R_{a,b} = q(a) q(b)$. 
Concerning spiflections, the relation $\tr(S_{a,b})=0$ is gotten 
from the four matrices (\ref{eqn:S-matrix}).
As above, this  implies that $\det (S_{a,b}) = q(a) q(b)$. 
The adjugate matrix $\tilde X$ of $X \in M(2,2;\K)$ is given by
$\tilde X = \tr(X) I - X$, whence 
$\wt R_{a,b} = b_q(a,b)  \id  - R_{a,b} =   R_{b,a}$
and
$\wt S_{a,b} = - S_{a,b}$. 
\end{proof}

From (\ref{eqn:R-matrix}) and (\ref{eqn:S-matrix}), we get the following linear relations
\begin{equation}
\begin{matrix}
\gamma R_{11} = \alpha R_{22} = \alpha \gamma I, & { } \quad
R_{12}+R_{21} = \beta I, 
\\
\gamma S_{11} + \alpha S_{22} = \beta S_{12}, & S_{12}=S_{21} .
\end{matrix}
\end{equation}
(``By density'', the third of these relations should imply that, for all $a,b$,
\begin{equation}
q(b) S_{a,a} + q(a) S_{b,b}  = b_q(a,b) S_{a,b} .
\end{equation}
A direct computation would be long, and the relation won't be needed in the sequel.)
Next, we record product relations among the matrices:
from $R_{12}^2 = \beta R_{12} - \alpha \gamma I$ we get the
following two versions of the ``multiplication table'' of the
spiration algebra $\bfC_q^R$:
\ssk
 \begin{center}
\begin{tabular}{|*{10}{c|}}
\hline
$\circ$ & $R_{11}$ & $R_{12}$ \\
\hline
$R_{11}$ & $\alpha R_{11}$ & $ \alpha R_{12}$ \\
\hline
$R_{12}$ & $\alpha R_{12} $ & $  \beta R_{12} -  \gamma R_{11} $   \\
\hline
\end{tabular}
$\qquad \qquad$
\begin{tabular}{|*{10}{c|}}
\hline
$\circ$ & $I$ & $R_{12}$ \\
\hline
$I$ & $I$ & $ R_{12}$ \\
\hline
$R_{12}$ & $R_{12} $ & $  \beta R_{12} - \alpha \gamma I $   \\
\hline
\end{tabular}
\end{center}

\begin{proposition}
Assume that $I$ and $R_{12}$ are linearly independent (note : {\rm (\ref{eqn:R-matrix})} shows
that  this is always the case
if $q$ has invertible elements). Then
the  spiration algebra $\bfC_q^R$ is isomorphic to the quotient algebra
$\K[X] / (X^2 - \beta X +\alpha \gamma)$.
\end{proposition}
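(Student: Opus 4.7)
The plan is to exhibit $\bfC_q^R$ as a free $\K$-module of rank $2$ with basis $\{I, R_{12}\}$, and then check that the multiplication is governed by the single relation $R_{12}^2 = \beta R_{12} - \alpha\gamma I$ from Theorem \ref{th:spiration2}, which is exactly the defining relation of the quotient algebra.

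First I would observe that the map $(x,y)\mapsto R_{x,y}$ is bilinear: this is immediate from trilinearity of $\langle\cdot\vert\cdot\vert\cdot\rangle$ together with $R_{z,y}(x)=\langle xyz\rangle$. Consequently every $R_{a,b}$ is a $\K$-linear combination of the four generators $R_{ij}$, $i,j\in\{1,2\}$. Using the explicit identifications $R_{11}=\alpha I$, $R_{22}=\gamma I$ and the linear relation $R_{12}+R_{21}=\beta I$ (so $R_{21}=\beta I - R_{12}$), all four of these lie in the submodule $\K I+\K R_{12}$. Hence $\bfC_q^R\subseteq \K I+\K R_{12}$, and the reverse inclusion is trivial since $I, R_{12}\in\bfC_q^R$ by definition. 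The relation $R_{12}^2=\beta R_{12}-\alpha\gamma I$ from Theorem \ref{th:spiration2} shows this submodule is also closed under composition, confirming it is indeed a subalgebra.

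Next I would define the algebra homomorphism
\begin{equation*}
\phi:\K[X]\longrightarrow \bfC_q^R,\qquad X\longmapsto R_{12},
\end{equation*}
extended by $1\mapsto I$ and $\K$-linearity. Because $R_{12}^2-\beta R_{12}+\alpha\gamma I=0$, the polynomial $X^2-\beta X+\alpha\gamma$ lies in $\ker\phi$, so $\phi$ factors through an algebra homomorphism
\begin{equation*}
\bar\phi:\K[X]/(X^2-\beta X+\alpha\gamma)\longrightarrow \bfC_q^R.
\end{equation*}
Surjectivity of $\bar\phi$ follows from the first step, since its image contains the generators $I$ and $R_{12}$ of $\bfC_q^R$.

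For injectivity, I would use that $X^2-\beta X+\alpha\gamma$ is monic, so the quotient is free of rank $2$ as a $\K$-module with basis the classes of $1$ and $X$; hence $\bar\phi$ sends $a+bX$ to $aI+bR_{12}$. If this vanishes in $M(2,2;\K)$, the hypothesis that $I$ and $R_{12}$ are linearly independent forces $a=b=0$. Thus $\bar\phi$ is injective, and therefore an isomorphism of $\K$-algebras. There is no real obstacle here beyond observing that the single relation from Theorem \ref{th:spiration2} suffices; the linear-independence hypothesis is precisely what is needed to prevent the algebra from collapsing to a proper quotient.
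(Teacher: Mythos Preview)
Your proof is correct and follows the same approach as the paper, which simply observes that $R_{12}$ satisfies the same quadratic relation as $[X]$ in the quotient algebra. You have carefully filled in the details the paper leaves implicit: that $\bfC_q^R$ is spanned by $I$ and $R_{12}$ (via bilinearity and the relations $R_{11}=\alpha I$, $R_{22}=\gamma I$, $R_{21}=\beta I-R_{12}$), and that the linear-independence hypothesis is exactly what makes the induced map from the rank-two quotient an isomorphism rather than merely a surjection.
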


\begin{proof} By the table, $R_{12}$ satisfies the same quadratic relation as
$[X]$ in the quotient algebra. 
\end{proof}

\begin{remark}\label{rk:scaling}
Note that $\bfC_q^L$ and $\K[X]/(X^2 - \beta X + \alpha \gamma)$ are invariant under scaling
of $q$ by invertible scalars.
In particular, they cannot distinguish between $q$ and $-q$.
\end{remark}

\nin
From para-associativity and commutativity we get $S_{a,a}S_{b,b} = R_{a,b} L_{a,b} = R_{a,b}^2$, and
\begin{equation}
S_{a,a} S_{a,b}(x)= \langle a \langle axb \rangle a \rangle =
\langle ab \langle xaa \rangle \rangle =
q(a) R_{a,b} (x),
\end{equation}
and likewise $S_{a,b}\circ S_{a,a}= q(a) R_{b,a}$,
whence the following table:
\ssk
 \begin{center}
\begin{tabular}{|*{10}{c|}}
\hline
$\circ$ & $S_{11}$ & $S_{22}$ & $S_{12}$  \\
\hline
$S_{11}$ & $\alpha^2 I$ & $ R_{12}^2 $ & $\alpha R_{12}$  \\
\hline
$S_{22}$ & $R_{21}^2 $ & $\gamma^2 I  $ &   $\gamma R_{12}$  \\
\hline
$S_{12}$ & $\alpha R_{21}$   & $\gamma R_{21} $  & $\alpha \gamma I$ \\
\hline
\end{tabular}
\end{center}
\ssk
Next, compositions between $R$- and $S$-operators: from para-associativity, 
we have $S_{11} R_{12} (x) = \langle e_1 \langle e_1 e_2 x \rangle e_1 \rangle =
\langle e_1 e_1 \langle e_2 x e_1 \rangle \rangle = \alpha S_{21} (x)$, etc., so:
\ssk
 \begin{center}
\begin{tabular}{|*{10}{c|}}
\hline
 & $S_{11}$ & $S_{22}$ & $S_{12}$  \\
\hline
$R_{12} \circ S_{ij}$ &  $\beta S_{11} - \alpha S_{12}$ & $\gamma S_{12}$  & $\gamma S_{11}$  \\
\hline
$S_{ij} \circ R_{12} $ & $\alpha S_{12}$   & $\beta S_{22} - \gamma S_{12}$  & $\alpha S_{22}$  \\
\hline
\end{tabular}
\end{center}
\ssk
When computing the products of {\em three} $R$- or $S$-operators, we essentially
get back the ternary composition on $V$:

\begin{theorem}\label{th:bfR} 
Let $e \in V^\times$ and 
$S:= s_e =  \frac{1}{q(e)} S_{e,e}$.
Then 
$$
f: \bfC_q^R \to \bfC_q^S, \quad X \mapsto S X
$$
is a bijection. Moreover, $SX \circ SY \circ SZ = S XY^\sharp Z$. 
\end{theorem}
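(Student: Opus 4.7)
The plan is to reduce everything to the binary product at $e$. By the proof of Theorem \ref{th:circles}, the product $x\cdot y:=\langle xey\rangle$ turns $V$ into a commutative, associative, unital algebra with unit $e/q(e)$ and involution $\sharp = s_e$ fixing $e$, and satisfying $\langle xyz\rangle = \frac{1}{q(e)}\, x \cdot y^\sharp \cdot z$. Writing $L_a$ for left multiplication by $a$ in this algebra, one therefore has $R_{x,y} = L_{xy^\sharp/q(e)}$ and $S_{x,z} = L_{xz/q(e)} \circ S$. Specializing $y=e$ (respectively $x=e$) yields every $L_a$, so $\bfC_q^R = \{ L_a \mid a \in V\}$ and $\bfC_q^S = (L_V) \circ S$; in particular $\bfC_q^R$ becomes identified with $(V,\cdot)$ as an algebra via $a \mapsto L_a$, and $\id = L_{e/q(e)}$ is automatically in this set.

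The key step is the equivariance identity $SX = X^\sharp S$ for every $X\in \bfC_q^R$, where $X^\sharp$ denotes the adjugate matrix in $M(2,2;\K)$. On the one hand, since $\sharp$ is an automorphism of the commutative algebra $(V,\cdot)$ (an anti-automorphism on a commutative algebra is a homomorphism) and fixes $e$, we have $S L_a = L_{a^\sharp} S$. On the other hand, Theorem \ref{th:spiration2} tells us that the adjugate sends $R_{x,y}$ to $R_{y,x}$, which under the correspondence $a = xy^\sharp/q(e) \leftrightarrow L_a$ becomes $L_a \mapsto L_{yx^\sharp/q(e)}$; using commutativity, $yx^\sharp = x^\sharp y = (xy^\sharp)^\sharp$, so the adjugate on $\bfC_q^R$ is exactly $L_a \mapsto L_{a^\sharp}$. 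Hence $SX = X^\sharp S$.

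From here everything follows. For bijectivity of $f$: since $X^\sharp \in \bfC_q^R$, we get $f(X) = SX = X^\sharp S \in \bfC_q^S$; conversely, using $S^2 = \id$ (Lemma \ref{lemma:Q}), left multiplication by $S$ is its own inverse on $M(2,2;\K)$ and sends $\bfC_q^S$ back into $\bfC_q^R$, so $f$ is a bijection. For the final identity, simply slide the middle $S$ across $Y$ and cancel it with the next $S$:
\[
SX \circ SY \circ SZ \;=\; SX \cdot Y^\sharp S \cdot SZ \;=\; SX\, Y^\sharp\, S^2\, Z \;=\; S\,(XY^\sharp Z).
\]
The only non-routine ingredient is the equivariance $SY = Y^\sharp S$, and that is where the commutativity established in Theorem \ref{th:circles} is genuinely used: in a non-commutative setting $\sharp$ would be an anti-automorphism and the identity would fail.
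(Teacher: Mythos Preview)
Your proof is correct and follows essentially the same route as the paper's: both hinge on the equivariance $SX = X^\sharp S$ (equivalently $SXS = X^\sharp$), from which the product formula $SX\cdot SY\cdot SZ = S\,XY^\sharp Z$ and bijectivity of $f$ are immediate via $S^2=\id$. The only difference is packaging: the paper simply cites Theorem~\ref{th:involution1} (commutative case) for the identity $SXS = X^\sharp$ and the composition rules of Theorem~\ref{th:spiration1} for the bijection, whereas you unpack everything explicitly through the binary algebra $(V,\cdot)$ at $e$, identifying $\bfC_q^R = L_V$ and $\bfC_q^S = L_V\circ S$ and then matching the algebra involution $a\mapsto a^\sharp$ with the adjugate via Theorem~\ref{th:spiration2}. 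Your version is more self-contained (avoiding the forward reference to Section~\ref{sec:groupspherical}), at the cost of being longer.
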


\begin{proof} 
Since  $S$ is bijective and  $S^2 = I$, by the general composition rules from
Theorem \ref{th:spiration1}, 
multiplication by $S$, 
induces a bijection $f$.
By Theorem \ref{th:involution1}, $SXS=X^\sharp$, whence the last claim. 
\end{proof}

\begin{example}
Let $\beta = 0$, $\alpha = 1 = \gamma$ (elliptic plane, $q(x)=x_1^2 + x_2^2$).
Then $\bfC_q^R$ is given by the usual formulae of ``complex numbers'', and
$\bfC_q^S$ by ``complex conjugations''. If $2$ is invertible in $\K$, then
 $M(2,2,\K) = \bfC_q^R \oplus \bfC_q^S$
 (just as in the real case, $M(2,2,\R) \cong \bC \oplus \overline \bC$).
But, e.g., when $\K=\Z$, then the sum remains direct, but
 the dihedral algebra is a proper subalgebra
of $M(2,2;\K)$.
\end{example}

\begin{example}
Let $\beta = 1$, $\alpha = 0 =\beta$ (hyperbolic plane, $q(x)=x_1x_2$).
Let $E_{ij}$ the usual elementary matrix. 
Then 
$R_{12}=E_{11}$, $R_{21} = E_{22}$, so $\bfC_q^R$ is the space of diagonal matrices,
and
$S_{12}=0$, $S_{11} = E_{12}$, $S_{22}= E_{21}$, so $\bfC_q^S$ is the space of
anti-diagonal matrices, and we
have
$M(2,2,\K)=\bfC_q^R \oplus \bfC_q^S$.
\end{example}

\begin{example}
Let $q(x) = x_1^2$, so $\alpha = 1$, $\beta=0=\gamma$.
Then $-R_{12}=E_{12} = S_{12}$ and $S_{11} = E_{11}-E_{22}$,
$S_{22}=0$.
In this case $\bfC_q^R \cap \bfC_q^S = \K E_{12}$.
\end{example}

\begin{example}
Consider the ``Eisenstein form'' $\alpha = \gamma = 1 = - \beta$.
We have the matrices $R_{11}= I$,
$$
R_{12} = \begin{pmatrix} - 1 & 1 \\ - 1 & 0 \end{pmatrix}, \,
R_{21} = \begin{pmatrix} 0 & - 1 \\ 1 & -1 \end{pmatrix}, \,
S_{11} = \begin{pmatrix} 1 & - 1 \\ 0 & - 1 \end{pmatrix}, \,
S_{12} = \begin{pmatrix} 0 & 1 \\ 1 & 0 \end{pmatrix}, \,
S_{22} = \begin{pmatrix} - 1 & 0 \\ - 1 & 1 \end{pmatrix},
$$
which form a group isomorphic to  $\s_3$. 
If $3$ is invertible in $\K$, then $(R_{12}, I, S_{11},S_{12})$ forms a basis
of $M(2,2;\K)$.
But, again, for $\K = \Z$, the dihedral algebra is a proper subalgebra of
$M(2,2,\K)$.
\end{example}

\begin{remark}
Here are some other remarkable formulae for the theory of $(\K^2,q)$.
Using the bilinear form $b(x,y) = \alpha x_1 y_1 + \beta x_1 y_2 + \gamma x_2 y_2$
 such that $b(x,x)=q(x)$,
 given by (\ref{eqn:binaryb}), the matrix $R_{x,y}$ 
given by the second sign of equality above can be written
$$
R_{x,y} = \begin{pmatrix} b(x,y) & \gamma [x,y] \\
\alpha [y,x] & b(y,x) \end{pmatrix}.
$$
According to Theorem \ref{th:spiration2}, its determinant
$b(x,y)b(y,x) + \alpha \gamma [x,y]^2$ equals $q(x) q(y)$.
The preceding formula can also be written 
$$
R_{x,y} = b(x,y) \id -  [x,y] R_{e_1,e_2}.
$$
Yet another way of writing it is 
$$
[u, \langle xyv \rangle ] = b(x,y) \, [u,v] + b(u,v) \, [y,x] .
$$
Indeed,
by (\ref{eqn:drei}) and uniqueness,
$ \langle xyv \rangle  = b(x,y)v - b(x,v) y + b(y,v) x$, whence
\begin{align*}
[u, \langle xy v \rangle]  &  = b(x,y) [u,v]  - b(x,v) [u,y] + b(y,v) [u,x]
\\
&=  b(x,y) [u,v]  +  b( [y,u] x + [u,x] y, v) 
\\
& =  b(x,y) [u,v]  - b( [x,y] u,v ) =  b(x,y) [u,v]  - [x,y] b(u,v )
\end{align*}
where we have used  the $3$-identity $[u,x]y+[x,y]u+[y,u]x=0$.
\end{remark}


\section{Group spherical  spaces}\label{sec:groupspherical}

\subsection{Basic definitions.}
We turn to the general case.
Recall from the introduction,
subsection \ref{sec:Gsp}:

\begin{Definition}\label{def:spherical}
Consider a {\em quadratic space} $(V,q)$ such that $V^\times$ is not empty. 
\begin{enumerate}
\item[(K)]
We  say that 
a $\K$-trilinear ``product'' map $V^3 \to V$, $(x,y,z) \mapsto \langle x\vert y \vert z \rangle$ (often  written shorter:
$\langle xyz \rangle$, if there is no danger of confusion)  is
 {\em $q$-compatible} if it satisfies the following
 {\em left and right Kirmse identities}: 
 $\forall x,y \in V$, 
$$
\langle x \vert x \vert y \rangle = q(x) y = \langle y \vert x \vert x \rangle .
$$
\item[(PA)]
We say that a trilinear map $V^3 \to V$, $(x,y,z) \mapsto \langle x\vert y \vert z \rangle$ is
{\em para-associative} if
it satisfies 
 Condition (AT2) from Appendix \ref{app:torsors}, i.e., 
$\forall a,b,c,d,e \in V$,
$$
\langle a b \langle cde \rangle \rangle = 
\langle a \langle dcb \rangle e \rangle =
\langle \langle abc \rangle de \rangle ,
$$
\item[(Com)]
 it is 
{\em commutative} if it is symmetric in the outer variables: $\forall x,y,z\in V$,
 $$
 \langle x y z  \rangle = 
\langle z y  x \rangle,
$$
\item[(TS)]
and 
{\em totally symmetric} if it is invariant under all permutations from ${\mathfrak S}_3$, i.e., it is commutative
and satisfies moreover,  $\forall x,y,z\in V$,
$$
\langle x y z \rangle = \langle yx z \rangle .
$$
\item[(TC)]
We say that a trilinear map $V^3 \to V$, $(x,y,z) \mapsto \langle x\vert y \vert z \rangle$ 
satisfies the {\em ternary composition law} if $\forall x,y,z \in V$,
$$
q( \langle xyz \rangle) = q(x) q(y) q(z).
$$
\end{enumerate} 
A {\em group spherical space} is a quadratic space 
together with a ternary product map satisfying (K), (PA) and (TC).
It is called {\em commutative (totally symmetric)} if the ternary product map has the
respective property.
Non para-associative versions will be considered later (Definition \ref{def:Moufang-spherical}).
\end{Definition}


\begin{Definition}
A {\em  morphism of group spherical spaces} is an  isometry $f$ such that 
$f(\langle xyz \rangle) = \langle f(x),f(y),f(z) \rangle$. 
\end{Definition}

It is clear that group spherical spaces with their morphisms form a category.
Note that the two conditions defining a morphism are not independent of each other:
if $f$ is compatible with trilinear products, then
$$
q(f(x)) f(e) = \langle f(x),f(x),f(e) \rangle = f(\langle xxe \rangle)  = q(x) f(e),
$$
so if $f(e) \in V^\times$, it follows that $f$
 automatically an isometry.
Conversely,  it is not always true that an isometry is automatically compatible with 
trilinear products (but this  is true if $\dim V = 2$ since then the quadratic form uniquely determines
the trilinear map).
Definitions are designed for the following to hold:

\begin{theorem}\label{th:torsortheorem}
In every group spherical space $(V,q)$, the set
$V^\times$ of invertible elements
with $(xyz) := \frac{\langle xyz \rangle}{q(y)}$ is a torsor.
Every sphere 
$S(c) = q^{1}(c)$ with $c \in \K^\times$ is a subtorsor
of $V^\times$, and so are, if $c \not= -c$, the sets
$$
D(c) := S(c) \sqcup S(-c).
$$
\end{theorem}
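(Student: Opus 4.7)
The statement is essentially a bookkeeping exercise: once the normalization $(xyz) = \langle xyz\rangle/q(y)$ is made sense of, the torsor axioms (IP) and (PA) follow almost directly from the Kirmse identity (K) and para-associativity (PA) of the trilinear product, while the ternary composition law (TC) takes care of all closure questions. The plan is to dispatch closure first, then (IP), then (PA), and finally to verify that spheres and their $\pm$-unions are stable.

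\emph{Step 1: Closure of $V^\times$ under the normalized product.} For $x,y,z \in V^\times$, the law (TC) gives $q(\langle xyz\rangle) = q(x)q(y)q(z) \in \K^\times$, so $\langle xyz\rangle \in V^\times$. Since $q(y) \in \K^\times$, the quotient $(xyz) = \langle xyz\rangle/q(y)$ is well-defined, and by homogeneity of $q$
\[
q\bigl((xyz)\bigr) = \frac{q(\langle xyz\rangle)}{q(y)^2} = \frac{q(x)q(z)}{q(y)} \in \K^\times ,
\]
so $(xyz)\in V^\times$. This shows the ternary operation is well-defined on $V^\times$.

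\emph{Step 2: Idempotency (IP) and para-associativity (PA).} Idempotency is immediate from (K):
\[
(xxy) = \frac{\langle xxy\rangle}{q(x)} = \frac{q(x)y}{q(x)} = y, \qquad (yxx) = \frac{\langle yxx\rangle}{q(x)} = y.
\]
For para-associativity, unwind each of the three bracketings of a 5-fold product. Using Step 1, one checks that
\[
\bigl(ab(cde)\bigr) = \frac{\langle a,b,\langle cde\rangle\rangle}{q(b)\,q(d)}, \quad
\bigl((abc)de\bigr) = \frac{\langle \langle abc\rangle,d,e\rangle}{q(b)\,q(d)},
\]
and, since $q((dcb)) = q(d)q(b)/q(c)$, also
\[
\bigl(a(dcb)e\bigr) = \frac{\langle a,\langle dcb\rangle,e\rangle}{q(b)\,q(d)} .
\]
The three denominators coincide, so the para-associativity (PA) of $\langle\cdot\,\cdot\,\cdot\rangle$ forces the three bracketings of the normalized product to coincide. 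Combined with Step 2, this shows $V^\times$ is a torsor in the sense of Appendix \ref{app:torsors}.

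\emph{Step 3: Subtorsor property.} For a sphere $S(c)$ with $c \in \K^\times$ and $x,y,z \in S(c)$, the law (TC) gives $q(\langle xyz\rangle) = c^3$, so $q((xyz)) = c^3/c^2 = c$, i.e.\ $(xyz) \in S(c)$, proving that $S(c)$ is a subtorsor. For $D(c) = S(c) \sqcup S(-c)$ (with $c \ne -c$), writing $\epsilon_1,\epsilon_2,\epsilon_3 \in \{\pm 1\}$ for the signs of $q(x), q(y), q(z)$, one gets $q((xyz)) = \epsilon_1\epsilon_2\epsilon_3\, c \in \{\pm c\}$, so $D(c)$ is also stable. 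It is a subtorsor since it is a union of subsets stable under the operation and the three torsor axioms are inherited from $V^\times$.

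\emph{Main obstacle.} None of the steps is deep; the only point requiring care is the consistent tracking of the normalization factors $q(y)$, $q(d)$, $q(c)$ in Step 2 so as to see that the three bracketings really carry the same denominator. Once this is observed, the proof is a transparent consequence of (K), (PA), and (TC), exactly as in the passage from a group to its associated torsor (Theorem~\ref{th:Torsors}).
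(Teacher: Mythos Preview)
Your proof is correct and follows essentially the same route as the paper: closure via (TC), idempotency via (K), para-associativity by tracking the common denominator $q(b)q(d)$ through the three bracketings, and stability of $S(c)$ and $D(c)$ from the formula $q((xyz))=q(x)q(y)^{-1}q(z)$. The paper phrases the last point as ``$\{c,-c\}$ is closed under the torsor law of $\K^\times$'', which is exactly your sign computation.
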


\begin{proof}
By the ternary composition law, 
\begin{equation}\label{eqn:qq}
q((xyz)) = \frac{ q(\langle xyz \rangle)}{q(y)^2} = q(x) q(y)^{-1} q(z).
\end{equation}
Together with para-associativity of $\langle --- \rangle$, we get para-associativity
of $(---)$:
$$
(ab(cde))=  \frac{1}{q(b) q(d) } \langle ab \langle cde \rangle \rangle =
\frac{1}{ q((dcb))} \langle a \frac{\langle dcb \rangle}{q(c)}  e \rangle
  = (a(dcb)e) = ((abc)de).
$$
Finally, (K) implies idempotency:
 $(xxy) = \frac{\langle xxy\rangle}{q(x)} = y = (yxx)$.
 Because of (\ref{eqn:qq}), $q(x)=q(y)=q(z)=c$ implies that
 $q((xyz)) = c$, so all spheres $S(c)$ are stable under the torsor law, and hence are
 subtorsors.
 Since the set $\{ c,-c \} \subset \K^\times$ is closed under the torsor law of $\K^\times$,
 if follows that $D(c)$ also is a subtorsor.
\end{proof}

Some, but not all, categorical notions take the usual form:

\begin{Definition}
A {\em subspace} of a group spherical space $(V,\langle --- \rangle)$ is a linear subspace
$E \subset V$ that is stable under the ternary product (a sub-triple system).

A subspace $E$ is called {\em direct} if there is a subspace $E'$ which is orthogonal
to $E$ for the bilinear form $b_q$ and such that $V = E \oplus E'$.

An {\em ideal} is a subspace $I$ such that
$\langle I VV \rangle + \langle VIV \rangle + \langle VVI \rangle \subset I$.
\end{Definition}

\nin
Clearly, a subspace is a spherical space in its own right, for the restriction $q\vert_E$.

\begin{lemma}
Every $1$- and every $2$-dimensional submodule of $V$ is a subspace.
\end{lemma}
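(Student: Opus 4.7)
By $\K$-trilinearity of $\langle xyz\rangle$, it suffices to check that $\langle u_i u_j u_k\rangle\in E$ whenever $u_1,\dots,u_n$ is a generating family for the submodule $E$ and $i,j,k\in\{1,\dots,n\}$. So my first move is to choose such a generating set: one element $v$ in the $1$-dimensional case, and a two-element set $\{e_1,e_2\}$ in the $2$-dimensional case.

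The $1$-dimensional case is immediate from the Kirmse identity (K): $\langle v,v,v\rangle=q(v)\,v\in\K v=E$.

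For the $2$-dimensional case the key observation is that, among the $8$ triples $(i,j,k)\in\{1,2\}^3$, at least two of the three indices must coincide. I would handle the three possible patterns as follows. If the repeated indices occur in positions $(1,2)$ we apply the left Kirmse identity to get $\langle e_i,e_i,e_j\rangle=q(e_i)\,e_j\in E$. If they occur in positions $(2,3)$ we apply the right Kirmse identity to get $\langle e_j,e_i,e_i\rangle=q(e_i)\,e_j\in E$. The only case that is not literally (K) is when the repeated indices are at the outer positions $(1,3)$; here I invoke the formula from Lemma \ref{la:existence} for the outer diagonal, namely
\begin{equation*}
\langle e_i,e_j,e_i\rangle = Q_{e_i}e_j = b_q(e_i,e_j)\,e_i - q(e_i)\,e_j,
\end{equation*}
which again lies in $E=\K e_1+\K e_2$. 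Trilinearity then promotes the eight inclusions to $\langle E,E,E\rangle\subset E$.

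There is no real obstacle here: the result rests only on (K) and its polarization, which forces all ``adjacent-repeated'' triples into the span of the remaining basis vector, together with the quadratic formula for the outer diagonal, which forces the one remaining ``outer-repeated'' pattern into $E$ as well. The argument breaks down in dimension $\geq 3$ precisely because a triple of basis vectors can then have three pairwise distinct indices, for which no such Kirmse-type constraint forces the product to stay in $E$.
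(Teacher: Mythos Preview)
Your proof is correct and follows essentially the same approach as the paper: both reduce to checking triples of generators, use the Kirmse identities for adjacent repetitions, and invoke the outer-diagonal formula $\langle e_i e_j e_i\rangle = b_q(e_i,e_j)e_i - q(e_i)e_j$ from Lemma~\ref{la:existence} for the remaining case. The paper's write-up is more terse (it refers back to the proof of Theorem~\ref{th:ternary} for the enumeration of triples), but the content is identical.
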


\begin{proof}
Let $\dim E = 1$ and $e$ a basis of $E$. Then $\langle r e,se,te \rangle = rst \, q(e) e \in E$. 

Let $\dim E = 2$ and
 $u,v \in E$. It follows that $\langle uuv \rangle = q(u)v = \langle vuu\rangle \in E$ and
$\langle uvu \rangle = b_q(u,v)u - q(u)v \in E$. Taking a basis $e_1,e_2 \in E$, this implies as in
the proof of Theorem \ref{th:ternary} that $E$ is stable under the ternary product.
\end{proof}

\begin{lemma}
If an ideal $I$ contains invertible elements and $\K$ is a field, then  $I=V$. If the form $q$ is anisotropic, then $V$ is 
{\em simple} (it contains no non-trivial ideals).
\end{lemma}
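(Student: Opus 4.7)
The plan is based on the observation that the Kirmse identity (K) turns the presence of an invertible element in an ideal into a very strong condition. For the first statement, I would take $x\in I\cap V^\times$. Since $I$ is an ideal, $\langle x,x,y\rangle \in I$ for every $y\in V$. But by (K), $\langle xxy\rangle = q(x)\, y$, and since $x$ is invertible, $q(x)\in\K^\times$. Multiplying by $q(x)^{-1}\in\K$ yields $y\in I$ for arbitrary $y$, hence $I=V$.

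A minor remark: this first argument never really uses that $\K$ is a field; the scalar $q(x)$ is invertible simply by definition of $V^\times$. What the field hypothesis is actually needed for is the second statement. Namely, if $\K$ is a field and $q$ is anisotropic, then for every nonzero $x\in V$ we have $q(x)\neq 0$, and since every nonzero scalar of a field lies in $\K^\times$, this says precisely that $x\in V^\times$. Consequently every nonzero vector of $V$ is invertible.

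To conclude the second claim, I would then suppose $I$ is any nonzero ideal of $V$. Picking a nonzero $x\in I$, the observation in the previous paragraph gives $x\in V^\times$, so the first part of the lemma yields $I=V$. Hence $V$ has no non-trivial ideals, i.e., it is simple.

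The proof is essentially formal and I do not expect any real obstacle; the only point requiring minimal care is the logical role of the hypothesis ``$\K$ is a field'', which is used exclusively to bridge the gap between $q(x)\neq 0$ and $q(x)\in\K^\times$ in the anisotropic argument. Over a general base ring, an anisotropic $q$ may still admit vectors $x$ with $q(x)$ a nonzero, non-invertible scalar, in which case simplicity need not follow from anisotropy alone.
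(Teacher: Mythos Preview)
Your argument is correct and follows essentially the same route as the paper: use the Kirmse identity $\langle xxy\rangle = q(x)y$ with an invertible $x\in I$ to force $y\in I$, and then observe that anisotropy over a field makes every nonzero vector invertible. Your additional remark that the field hypothesis is only needed for the second claim is accurate and worth keeping.
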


\begin{proof}
Let $e \in I$ be invertible.
For all $v\in V$, $q(e) v = \langle eev \rangle \in I$, so $v \in I$, and $I=V$.
Thus, if $q$ is anisotropic, a non-zero ideal must be equal to $V$.
\end{proof}

We'll see that there is a notion of ``representation'', but
there is no general notion of ``direct sum'' or ``tensor product''.

\subsection{Inner operators: spirations and spiflections.}\label{sec:spirations} 

\begin{Definition}\label{def:spirations}
In a group spherical space, we define linear
{\em operators of left, right and middle multiplication} by
$\langle xyz\rangle = L_{x,y} (z) = R_{z,y}(x) = S_{x,z}(y)$.
Altogether,  we refer to them as {\em inner operators}. 
We also call them  {\em (left, right) spirations} (short from 
\href{https://en.wikipedia.org/wiki/Spiral_similarity}{\em spiral similarities}),
 and {\em symmetries  (spiflections)}. 
\end{Definition}

Every algebraic identity satisfied by the trilinear product can be rewritten, in various ways,
as an operator identity. For instance, commutativity means that 
$L_{x,y} = R_{x,y}$ (short: $L=R$), and Kirmse (K) can be written
\begin{equation}
L_{x,x} = q(x) \, \id_V  = R_{x,x} .
\end{equation}
Para-associativity  implies that 
 {\em the composition of two left (right) spirations is a left 
(right) spiration}. It follows that  the linear span of all
$L_{x,y}$ with $x,y \in V$ is a subalgebra of $\End(V)$, and likewise for $R$.

\begin{Definition}
We take up Def.\ \ref{def:dihedral}:
the {\em left  (right) spiration algebra} of a group spherical  space $(V,q)$
 is the algebra generated by all $L_{x,y}$, resp.\ by all $R_{x,y}$, $x,y \in V$, 
$$
\bfC_q^L = \{  \sum_i  \lambda_i  L_{x_i,y_i}  \mid  \mu, \lambda_i \in \K, x_i,y_i \in V \} ,
\quad
\bfC_q^R =  \{  \sum_i  \lambda_i  R_{x_i,y_i}  \mid  \mu, \lambda_i \in \K, x_i,y_i \in V \} . 
$$
\end{Definition}


\begin{theorem} \label{th:spiration1} 
For every group spherical space $(V,q)$, the inner operators satisfy:
\begin{enumerate}
\item (right composed with right gives right): 
$R_{\langle x_3 x_4 x_5\rangle , x_2} = R_{x_5 , \langle x_4 x_3 x_2\rangle} = R_{x_5 x_4} R_{x_3 x_2}$

(left composed with left gives left):
$L_{x_1 x_2} L_{x_3 x_4} = L_{x_1,\langle x_4,x_3,x_2\rangle} = L_{\langle x_1 x_2 x_3\rangle ,x_4 }$,
\item
(middle with left or right gives middle):
$S_{x_1, \langle x_3 x_4 x_5 \rangle} = S_{x_1 x_5} L_{x_4 x_3} = R_{x_5 x_4} S_{x_1 x_3}$

$L_{x_1 x_2} S_{x_3 x_5} = S_{x_1 x_5 } R_{x_2 x_3} = S_{\langle x_1 x_2 x_3\rangle , x_5}$

\item
(left and right commute):
$L_{x_1 x_2} R_{x_5 x_4} = S_{x_1 x_5} S_{x_4 x_2} = R_{x_5 x_4} L_{x_1 x_2}$
\end{enumerate}
\nin and the following:
 for all $a,b,c,\ldots \in V$, 
\begin{align*} 
S_{a,b} \circ S_{u,v}\circ S_{y,z} & = 
S_{\langle avy \rangle, \langle zub \rangle} 
\\
R_{a,b}^2  = R_{\langle aba \rangle, b}  & =
b_q(a,b) R_{a,b} - q(a) q(b) \id 
\\
R_{a,b} \circ R_{b,c} & = q(b) R_{a,c}
\\
R_{a,b} \circ R_{b,a} &=  q(a) q(b) \id
\\
R_{a,b} + R_{b,a} & = b_q(a,b) \id 
\\
S_{a,b} \circ S_{b,a}  &  =  q(a) q(b) \id ,
\end{align*}
and the endomorphisms $R_{a,b}$, resp.\ $S_{a,b}$ are invertible if, and only if,
$q(a) q(b) \in \K^\times$, and then
$$
R_{a,b}^{-1} = \frac{1}{q(a) q(b)} R_{b,a},\qquad 
S_{a,b}^{-1} = \frac{1}{q(a) q(b)} S_{b,a} . 
$$
If $(V,q)$ is commutative, then the left and right spiration algebras are both commutative
and agree with each other.
\end{theorem}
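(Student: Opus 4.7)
The plan is to prove every identity by a targeted application of para-associativity (PA), the Kirmse identity (K) and its polarization, and the outer-diagonal formula $\langle xyx\rangle = b_q(x,y)x - q(x)y$ from Lemma \ref{la:existence}; the ternary composition law (TC) will not actually be needed for any of these operator identities. I will read all the expressions by unfolding the definitions $\langle xyz\rangle = L_{x,y}(z) = R_{z,y}(x) = S_{x,z}(y)$ and matching slots.

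First, items (1), (2), (3) of the theorem are direct translations of the three equal expressions in (PA). For instance, $L_{x_1 x_2}L_{x_3 x_4}(z) = \langle x_1 x_2\langle x_3 x_4 z\rangle\rangle$, and the two other bracketings furnished by (PA) immediately identify this composition as $L_{\langle x_1 x_2 x_3\rangle, x_4}(z)$ and $L_{x_1,\langle x_4 x_3 x_2\rangle}(z)$. The same mechanical unfolding produces the right and middle-right composition rules, and then $L_{x_1 x_2}R_{x_5 x_4}(z) = \langle x_1 x_2\langle z x_4 x_5\rangle\rangle$ rewrites, via the two alternative PA forms, as $R_{x_5 x_4}L_{x_1 x_2}(z)$ and as $S_{x_1 x_5}S_{x_4 x_2}(z)$, giving (3).

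For the quadratic identities on $R_{a,b}$, I compute $R_{a,b}^2(x) = \langle\langle xba\rangle ba\rangle$ and apply (PA) in the form $\langle\langle abc\rangle de\rangle = \langle a\langle dcb\rangle e\rangle$ to obtain $\langle x\langle bab\rangle a\rangle$. Substituting the outer-diagonal formula $\langle bab\rangle = b_q(a,b)b - q(b)a$ and using (K) for the remaining triples gives $R_{a,b}^2 = b_q(a,b)R_{a,b} - q(a)q(b)\,\id$. The identity $R_{a,b}R_{b,c} = q(b)R_{a,c}$ is a special case of (1) with the inner bracket evaluated by (K), and setting $c=a$ yields $R_{a,b}R_{b,a} = q(a)q(b)\,\id$. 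The relation $R_{a,b}+R_{b,a} = b_q(a,b)\,\id$ is obtained by polarizing the right Kirmse identity $\langle yxx\rangle = q(x)y$ in the variable $x$. The analog $S_{a,b}S_{b,a} = q(a)q(b)\,\id$ then falls out of (3) with appropriately matched indices, since both free ``outer'' $L$- and $R$-operators collapse to scalar multiplication via (K). Invertibility and the explicit inverse formulas are immediate consequences.

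The main obstacle, which I would handle last, is the triple-$S$ formula $S_{a,b}S_{u,v}S_{y,z} = S_{\langle avy\rangle,\langle zub\rangle}$, since it requires chaining three PA rewrites while keeping track of the slots. I will avoid brute force by assembling it from the already-proved composition rules: from (3), $S_{u,v}S_{y,z} = L_{u,z}R_{v,y}$; from (2), $S_{a,b}L_{u,z} = S_{a,\langle zub\rangle}$; and one final rewrite $S_{a,\langle zub\rangle}R_{v,y}(w) = \langle a\langle wyv\rangle\langle zub\rangle\rangle = \langle\langle avy\rangle w\langle zub\rangle\rangle$ (using (PA) once more on the outer bracket) delivers $S_{\langle avy\rangle,\langle zub\rangle}(w)$. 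Finally, the commutative assertion is immediate: if $\langle xyz\rangle = \langle zyx\rangle$, then $L_{x,y}(z) = R_{x,y}(z)$ for all $z$, so the two algebras coincide, and commutativity of the resulting algebra follows from the middle-swap consequence of (PA) together with $L=R$.
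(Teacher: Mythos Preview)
Your proof is correct and follows essentially the same route as the paper: items (1)--(3) are read directly off (PA), the $R$-identities come from (PA) combined with (K) and the outer-diagonal formula, and the commutative statement follows from $L=R$ together with item (3). Your decomposition of the triple-$S$ formula via $S_{u,v}S_{y,z}=L_{u,z}R_{v,y}$ and $S_{a,b}L_{u,z}=S_{a,\langle zub\rangle}$ is slightly more explicit than the paper's, which simply applies (PA) twice to $\langle a\langle u\langle yxz\rangle v\rangle b\rangle$; both arrive at the same place.

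One small caveat on your side remark that (TC) is nowhere needed: for the ``only if'' direction of the invertibility claim, the purely algebraic relations $R_{a,b}R_{b,a}=q(a)q(b)\,\id$ and $R_{a,b}+R_{b,a}=b_q(a,b)\,\id$ do not by themselves force $q(a)q(b)\in\K^\times$ when $R_{a,b}$ is invertible over a general ring. The cleanest argument uses (TC) together with $V^\times\neq\emptyset$: if $R_{a,b}$ is surjective, pick $x$ with $R_{a,b}(x)=e\in V^\times$, and then $q(e)=q(a)q(b)q(x)\in\K^\times$ forces $q(a)q(b)\in\K^\times$. The paper is equally terse on this point, so this is not a gap in your argument relative to the paper's own proof.
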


\begin{proof}
The composition rules are just a way of rewriting the identity of 
para-associativity (identity (AT2)). Similarly,
\begin{align*}
S_{a,b} \circ S_{u,v}\circ S_{y,z} (x) &= 
\langle a \langle u \langle yxz \rangle v \rangle b \rangle = 
\langle \langle avy\rangle x \langle zub\rangle \rangle = 
S_{\langle avx \rangle, \langle zub \rangle} (x) .
\end{align*}
Polarizing $R_{a,a} = q(a) \id$, we get 
$R_{a,b} + R_{b,a}  = b_q(a,b) \id$.
Next, the relations for $R_{a,b}^2$ and $S_{a,b}S_{b,a}$ are proved as in
Theorem \ref{th:spiration2} . 
 By the equalities just proved, the condition for invertibility
is both necessary and sufficient. 

If $(V,q)$ is commutative, then $L_{a,b}=R_{a,b}$. But left and right spirations always commute, so in this case
right spirations commute among each other, and so do left spirations.
\end{proof}

\begin{remark}
Left- and right spirations $X$ always satisfy a quadratic relation
$X^2 + \lambda X + \mu =0$, but spiflections in general do not.
However,
when $V$ is commutative, then $S_{ab}=S_{ba}$, so
 $S_{a,b}^2 = S_{a,b} S_{b,a} = q(a) q(b) \id$.
\end{remark}

\begin{theorem}\label{th:involution1}
Assume $e \in V$ is an invertible element. Then, for all $a,b\in V$,
$$
S_{e,e} \circ L_{a,b} \circ S_{e,e} = R_{S_{e,e}(a),S_{e,e}(b)} .
\leqno{\rm (A)}
$$
If $V$ is commutative, 
$$
S_{e,e} \circ L_{a,b} \circ S_{e,e} =  q(e)^2 L_{b,a}  ,
\leqno{\rm (B)}
$$
so conjugation by $S_{e,e}$ is an automorphism 
 $\sharp: \bfC_q \to \bfC_q$ sending
$L_{a,b}$ to $L_{b,a}$.
\end{theorem}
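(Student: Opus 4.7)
The plan is to prove (A) by unpacking the inner operators and applying para-associativity (PA) and the Kirmse identity (K) a few times. Evaluated at $x\in V$, the left-hand side of (A) becomes $\langle e\,\langle a\,b\,\langle exe\rangle\rangle\,e\rangle$, while the right-hand side is $\langle x\,\langle ebe\rangle\,\langle eae\rangle\rangle$. I will reduce both sides to the common normal form $q(e)\,\langle xe\langle bae\rangle\rangle$. On the left, one application of PA (in the form $\langle AB\langle CDE\rangle\rangle = \langle\langle ABC\rangle DE\rangle$) moves $e$ out of the innermost block, and then the relabelled variant $\langle A\langle uvw\rangle E\rangle = \langle Aw\langle vuE\rangle\rangle$ combined with (K) collapses the outer $\langle e\cdot e\rangle$ and produces a factor $q(e)$; one more application of the same variant puts the result in the claimed normal form. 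On the right, a single PA extracts the block $\langle x\langle ebe\rangle e\rangle$, which by the same variant of PA and (K) equals $q(e)\langle xeb\rangle$; a final PA rewrites what remains as $q(e)\langle xe\langle bae\rangle\rangle$. Matching the two reductions proves (A).

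For (B), commutativity lets us replace $R$ by $L$, so that (A) becomes $S_{e,e}\circ L_{a,b}\circ S_{e,e} = L_{Q_e a,Q_e b}$; it then suffices to show $L_{Q_e a,Q_e b} = q(e)^2 L_{b,a}$. Evaluating at $c$, I will again reduce $\langle\langle eae\rangle\,\langle ebe\rangle\,c\rangle$ by two successive PA-and-(K) passes, each eliminating one $\langle e\cdot e\rangle$ block at the cost of producing one factor $q(e)$: a first pass reduces an inner $\langle e\langle ebe\rangle c\rangle$ to $q(e)\langle bec\rangle$; commutativity then permutes outer entries so that the surrounding expression becomes $q(e)\langle\langle bae\rangle ec\rangle$; a second pass reduces this to $q(e)^2\langle bac\rangle = q(e)^2 L_{b,a}(c)$. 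The final assertion about $\sharp$ is then immediate: by Lemma \ref{lemma:Q}, $S_{e,e}^{-1} = S_{e,e}/q(e)^2$, so conjugation by $S_{e,e}$ sends $L_{a,b}$ to $L_{b,a}$, which is manifestly a $\K$-linear involution on generators and therefore extends to an algebra automorphism of $\bfC_q$.

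The only real difficulty is bookkeeping: (AT2) comes in three equivalent variants, and at each step one must select the one that either exposes a $\langle e\cdot e\rangle$ block ready for (K) or brings a $q(e)$ scalar out. The hypothesis $e\in V^\times$ enters only through (K), via $\langle eey\rangle = q(e)y$ with $q(e)$ available as a genuine scalar; commutativity is used \emph{only} in (B), to identify $R$ with $L$ and to permute outer entries of a single product during the intermediate reductions. Everything else is formal.
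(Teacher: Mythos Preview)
Your proposal is correct and follows essentially the same route as the paper's own proof: evaluate both sides of (A) at a generic $x$, then repeatedly apply para-associativity (AT2) together with the Kirmse identity to collapse $\langle e\cdots e\rangle$ blocks, arriving at the common value $q(e)\,\langle xe\langle bae\rangle\rangle$ (the paper writes the left side as $q(e)\,\langle\langle xeb\rangle ae\rangle$, which is the same by one more PA); for (B) the paper likewise uses commutativity to pass from $R$ to $L$ and then reduces $\langle\langle eae\rangle\langle ebe\rangle c\rangle$ to $q(e)^2\langle bac\rangle$ by the same PA/K moves you describe. Your write-up is simply more explicit about which variant of (AT2) is invoked at each step.
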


\begin{proof}
The left hand side of (A), applied to an element $x$, gives
$q(e) \langle \langle xeb\rangle ae \rangle$.

The right hand side gives
$q(e) \langle xe \langle bae \rangle \rangle$.
Both agree.  Now cancel out the invertible factor $q(e)$.

If $V$ is commutative, 
$R_{S_{e,e}(a),S_{e,e}(b)} (x) = \langle \langle eae \rangle , \langle ebe \rangle , x \rangle =
q(e)^2 \langle bax \rangle$, whence (B).
Since $S_{e,e}^2 = q(e)^2 \id$, conjugation by $S_{e,e}$ thus is an (inner) automorphism
sending $L_{a,b}$ to $L_{b,a}$.
\end{proof}

\begin{remark}
When $V = \K^2$, then conjugation by $S_{e,e}$ is the adjugate map
(which can be used to define an
automorphism $\sharp$  even if $q$ has no invertible elements).
\end{remark}

\subsection{Link binary-ternary.}
Just like a group is obtained by specifying an arbitrary base point in a torsor (Theorem \ref {th:Torsors}),
we get a binary structure by specifying a base point in a group spherical space, and
we can go back and forth.
The following definition is more general than usual  since we drop 
the  assumption that the norm $N$  be
{\em non-degenerate} (see \cite{McC}, p.156, \cite{Fa}, p.54,
\cite{KMRT}, p. 456), so maybe one should speak of ``generalized composition algebras''
(but we will omit the word ``generalized''):

\begin{Definition}\label{def:compalg}
An involution of a unital $\K$-algebra $(\bA,1)$ is an anti-automorphism $x \mapsto x^\sharp$ of order $2$.
It is called a {\em central involution} if, for all $x \in \bA$,
the elements $x+x^\sharp$ and $xx^\sharp$ belong to the {\em center} of $\bA$
(set of elements that commute and associate with all other elements), and it is called
a {\em scalar involution} if for all $x \in \bA$, 
$$
x + x^\sharp \in \K 1, \qquad xx^\sharp \in \K 1.
$$
Every scalar involution is central.
A (generalized) {\em composition algebra (over $\K$)} is a unital algebra together with
a scalar involution.
 \end{Definition}

\begin{lemma}\label{la:compalg} 
In any (generalized) composition algebra $\bA$, the map $N:\bA \to \K$, defined by
$$
x x^\sharp = N(x) 1
$$
 is a quadratic form, called the {\em norm} of $\bA$, and we have
 $x+x^\sharp = b_n(1,x)1$.
If $\bA$ is  moreover  {\em associative}, then $V =\bA$, for any
$\lambda \in \K^\times$, with trilinear map
$$
\langle xyz \rangle = \lambda \,  x y^\sharp z 
$$
is a group spherical  space over $\K$, with quadratic form $q=\lambda N$. 
\end{lemma}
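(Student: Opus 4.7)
The plan is to first establish that $N$ is a well-defined quadratic form and verify the displayed identity for $x+x^\sharp$, and then, under the associativity hypothesis, to check the three defining axioms (K), (PA), (TC) of Definition \ref{def:spherical} for the trilinear map $\langle xyz\rangle = \lambda\, xy^\sharp z$. The whole proof is essentially bookkeeping once the correct normalization conventions are in place.

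For the norm: since $\sharp$ is scalar, $xx^\sharp \in \K\cdot 1$, so $N(x)$ is a well-defined scalar. Homogeneity $N(\mu x) = \mu^2 N(x)$ is immediate from $(\mu x)(\mu x)^\sharp = \mu^2 xx^\sharp$. For the polarization, I would compute
\begin{equation*}
(x+y)(x+y)^\sharp = xx^\sharp + yy^\sharp + xy^\sharp + yx^\sharp,
\end{equation*}
which both proves $b_N(x,y)\cdot 1 = xy^\sharp + yx^\sharp$ (hence $b_N$ is $\K$-bilinear) and shows it is symmetric. To deduce $x+x^\sharp = b_N(1,x)\cdot 1$, I note that $1^\sharp = 1$: from $x^\sharp = (1\cdot x)^\sharp = x^\sharp\cdot 1^\sharp$, the element $1^\sharp$ is a right unit and hence equals $1$. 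Substituting into $b_N(1,x)\cdot 1 = 1\cdot x^\sharp + x\cdot 1^\sharp$ gives exactly $x+x^\sharp$.

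For the second part, I first establish the preliminary identity $x^\sharp x = N(x)\cdot 1$. Writing $t(x) := b_N(1,x)$, so that $x^\sharp = t(x)\cdot 1 - x$, I get $x^\sharp x = t(x)x - x^2 = x\bigl(t(x)\cdot 1 - x\bigr) = x x^\sharp = N(x)\cdot 1$. For (K), this yields $\langle xxy\rangle = \lambda xx^\sharp y = \lambda N(x)y = q(x)y$ and $\langle yxx\rangle = \lambda y x^\sharp x = \lambda N(x)y = q(x)y$, the scalar $N(x)$ commuting with $y$. For (PA), I use associativity together with the anti-automorphism rule $(uvw)^\sharp = w^\sharp v^\sharp u^\sharp$ to collapse each of the three bracketings
\begin{equation*}
\langle ab\langle cde\rangle\rangle,\quad \langle a\langle dcb\rangle e\rangle,\quad \langle\langle abc\rangle de\rangle
\end{equation*}
to the single word $\lambda^2\, a b^\sharp c d^\sharp e$; the middle one needs $(dc^\sharp b)^\sharp = b^\sharp c d^\sharp$. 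For (TC), I compute
\begin{equation*}
\langle xyz\rangle\,\langle xyz\rangle^\sharp = \lambda^2\, (xy^\sharp z)(z^\sharp y x^\sharp) = \lambda^2 N(z)\, x y^\sharp y x^\sharp = \lambda^2 N(y)N(z)\, xx^\sharp = \lambda^2 N(x)N(y)N(z)\cdot 1,
\end{equation*}
pulling the central scalars $N(z), N(y), N(x)$ out one at a time. This gives $N(\langle xyz\rangle) = \lambda^2 N(x)N(y)N(z)$, and multiplying by $\lambda$ yields $q(\langle xyz\rangle) = \lambda^3 N(x)N(y)N(z) = q(x)q(y)q(z)$.

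Finally, $V^\times$ is nonempty since $N(1) = 1$ (from $1\cdot 1^\sharp = 1$), so $q(1) = \lambda \in \K^\times$. The only point requiring care is tracking the factor $\lambda$ through the ternary composition rule and repeatedly invoking the centrality of $N(x)$ when moving it across factors in the (K) and (TC) computations; no genuine obstacle arises because associativity together with the scalar property of $\sharp$ reduces every identity to manipulations inside $\bA$ that are already encoded in the definition of a composition algebra.
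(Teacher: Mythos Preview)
Your proof is correct and follows essentially the same route as the paper's: both establish that $N$ is a quadratic form via the polarization $b_N(x,y)\cdot 1 = xy^\sharp + yx^\sharp$, use $x^\sharp = t(x)\cdot 1 - x$ to get $x^\sharp x = xx^\sharp$, and then verify (K), (PA), (TC) by direct associative manipulation. The only cosmetic difference is that the paper first proves the binary multiplicativity $N(ab)=N(a)N(b)$ and deduces (TC) from it, whereas you compute $N(\langle xyz\rangle)$ directly; you are also more careful than the paper in tracking the scalar $\lambda$ and in explicitly noting $1\in V^\times$.
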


\begin{proof}
Let $a+a^\sharp = t(a) 1$ with $t(a) \in \K$.
Since $a^\sharp =  t(a) - a$ commutes with $a$, we have
$a a^\sharp = a^\sharp a = N(a) 1$.
Clearly $N(ra)=r^2 N(a)$ for all $r\in \K$, and
$$
(N(a+b) - N(a) - N(b) )1 = (a+b)(a+b)^\sharp - aa^\sharp - bb^\sharp =
ab^\sharp + ba^\sharp
$$
is $\K$-bilinear, so $N$ is a quadratic form, and moreover
$b_n(a,1)=a+a^\sharp = t(a) 1$ (``trace form''). 
We have the ``Cayley-Hamilton equation''
\begin{equation}\label{eqn:CH}
a^2 - t(a) a + N(a)1  = a (a - t(a)) + a a^\sharp = - aa^\sharp + aa^\sharp = 0.
\end{equation}
It follows that $\langle xxy \rangle = N(x)y = \langle yxx\rangle$,
so the trilinear product is $q$-compatible.
Since the binary product is associative by assumption,
 and $\sharp$ is an involution, the para-associative law
(AT2) holds:
$$
\langle a b \langle cde \rangle \rangle = 
ab^\sharp c d^\sharp e =
a (d c^\sharp b)^\sharp e =
\langle a \langle dcb \rangle e \rangle 
$$
and moreover, using again  associativity, 
\begin{equation}
N(ab) = (ab)^\sharp (ab) = (b^\sharp a^\sharp) (ab) = b^\sharp (a^\sharp a) b = N(a) b^\sharp b =
N(a) N(b)
\end{equation}
whence the ternary composition rule.
\end{proof}

\nin
The lemma has a converse, whose most subtle issues are related to
{\em normalization}:
the neutral element $1$ of a composition algebra must satisfy $N(1)=1$,
so norm and unit element depend on each other, whereas the notion of group
spherical space is invariant under scaling of forms.

\begin{theorem}\label{th:binary-ternary}
There is a bijection between associative (generalized) composition algebras and
group spherical  spaces, up to choice of base point. More precisely, one
direction is given by the preceding lemma. For the other direction,
let $(V,q)$ be a group spherical  space, and
fix  $e \in V^\times$. Then
\begin{enumerate}
\item
the bilinear product $xz:= x \cdot_e z = \frac{\langle xez \rangle}{q(e)}$ turns $V$ into an associative algebra with
neutral element $e$, and with involution $\sharp = s_e = \frac{S_{e,e}}{q(e)} : V \to V$,
$$
x^\sharp = s_e(x)= 
 \frac{ \langle exe \rangle}{q(e)} = \frac{ b_q(x,e)}{q(e)} e - x ,
$$
and
this algebra is a {\em composition algebra} with respect to the {\em norm}
$N(x):= \frac{q(x)}{q(e)}$.
\item
The construction from Item 1 is inverse to the construction from Lemma 
\ref{la:compalg}:
$$
\langle xyz \rangle = q(e) x \cdot y^\sharp \cdot z .
$$
\item
The algebra $(V,\cdot_e)$ is isomorphic to $\bfC_q^L$, via

$V \to \bfC_q^L$, $v \mapsto L(v):= \frac{1}{q(e)}L_{v,e}$, with inverse
$\bfC_q^L \to V$, $f \mapsto f(e)$.
\item
The algebra $(V,\cdot_e)$ is anti-isomorphic to $\bfC_q^R$, via

$V \to \bfC_q^R$, $v \mapsto R(v):= \frac{1}{q(e)}R_{v,e}$, with inverse
$\bfC_q^R \to V$, $f \mapsto f(e)$.
\item
The ternary composition law follows already from the Kirmse laws and para-associativity:
[(K) $\land$ (PA) ] $\Rightarrow$ (TC)
\item
Assume $\K$ is a field. Then
$q$ is anisotropic iff $(V,\cdot,e)$ is a skew-field.
\end{enumerate}
\end{theorem}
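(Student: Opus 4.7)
The plan is to establish Items 1 and 2 first, using only the Kirmse identity (K) and para-associativity (PA); then Items 3--6 follow by short arguments. Throughout I will exploit the three equivalent forms of (PA) to reshuffle nested brackets, and use (K) in the form $\langle eex\rangle = q(e)x = \langle xee\rangle$ to absorb factors of $q(e)$. I will also repeatedly use that $e^\sharp = e$, which is immediate from (K).

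For Item 1, the unit property $e\cdot_e x = x = x\cdot_e e$ is immediate from (K). Associativity reduces to the triple equality $\langle\langle xey\rangle ez\rangle = \langle x\langle eye\rangle z\rangle = \langle xe\langle yez\rangle\rangle$, which is exactly (PA). The anti-automorphism property $s_e(x\cdot_e y) = s_e(y)\cdot_e s_e(x)$ is the key computation: after unwinding both sides, it amounts to $q(e)\langle e\langle xey\rangle e\rangle = \langle\langle eye\rangle e\langle exe\rangle\rangle$. I would apply (PA) once to rewrite the RHS as $\langle ey\langle ee\langle exe\rangle\rangle\rangle$, collapse by (K) to $q(e)\langle ey\langle exe\rangle\rangle$, and then apply (PA) once more to recover the LHS. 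The composition algebra condition reduces to $xx^\sharp = \tfrac{1}{q(e)^2}\langle xe\langle exe\rangle\rangle$; using (PA) and (K) in two more strokes this collapses to $\tfrac{q(x)}{q(e)}e$, while $x+x^\sharp = \tfrac{b_q(x,e)}{q(e)}e$ is read off directly from the formula for $s_e$. Hence $N(x) := q(x)/q(e)$ is the norm. For Item 2, I would first verify $x\cdot_e y^\sharp = \tfrac{\langle xye\rangle}{q(e)}$ by one application of (PA) combined with (K), and then deduce $(x\cdot_e y^\sharp)\cdot_e z = \tfrac{\langle xyz\rangle}{q(e)}$ by a second, completely parallel, application.

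Items 3 and 4 follow by substituting Item 2: $L_{v,e}(x) = \langle vex\rangle = q(e)\, v\cdot_e e^\sharp\cdot_e x = q(e)\, v\cdot_e x$, so $L(v)$ is left multiplication by $v$ in the algebra; the ``left-$\circ$-left gives left'' rule from Theorem \ref{th:spiration1} then shows that every element of $\bfC_q^L$ is such a left multiplication, and $f\mapsto f(e)$ is an obvious inverse to $L$. The $R$-case is entirely symmetric, with composition reversed to produce an anti-isomorphism. For Item 5, observe that Items 1--2 used only (K) and (PA); we therefore already have an associative composition algebra, and Lemma \ref{la:compalg} yields $N(xy)=N(x)N(y)$. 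Combined with $N(y^\sharp)=N(y)$ (which follows from $y^\sharp y = yy^\sharp = N(y)e$), we obtain $q(\langle xyz\rangle) = q(e)^3 N(x)N(y)N(z) = q(x)q(y)q(z)$. For Item 6 over a field, $x\cdot_e x^\sharp = N(x)e$ immediately gives $x^{-1} = x^\sharp/N(x)$ whenever $N(x)\neq 0$; conversely, if $N(x)=0$ with $x\neq 0$, invertibility of $x$ forces $x^\sharp = 0$, whence $x = t(x)e$ is a scalar multiple of $e$, but then $x^\sharp = x$, contradicting $x\neq 0$.

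The main obstacle is organisational rather than conceptual: every identity reduces to (K) and (PA), but one must pick the right form of (PA) at each step to avoid a combinatorial tangle of nested brackets. A secondary subtlety is ensuring non-circularity in Item 5: the derivation of the composition algebra structure in Items 1--2 must not implicitly invoke (TC), since the whole point of Item 5 is that [(K) $\wedge$ (PA)] $\Rightarrow$ (TC) strengthens Definition \ref{def:spherical}.
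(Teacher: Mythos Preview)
Your proposal is correct and follows essentially the same approach as the paper's proof: both derive Items~1 and~2 by repeated applications of (PA) together with (K), handle Items~3--4 via the composition rules for $L$- and $R$-operators, obtain Item~5 by noting that Items~1--2 never invoke (TC) and then appealing to Lemma~\ref{la:compalg}, and treat Item~6 as a direct consequence of the relation $xx^\sharp = N(x)e$. The only differences are organisational: you split the computation in Item~2 into two stages (first $x\cdot_e y^\sharp = \langle xye\rangle/q(e)$, then the full product), whereas the paper does it in one pass; and you spell out the contrapositive argument for Item~6 in detail, while the paper dispatches it with the one-line observation that, over a field, $q$ is anisotropic iff $V^\times = V\setminus\{0\}$.
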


\begin{proof}
1.
Associativity of the binary product follows directly from para-associativity of the ternary one.
 The element $e$ is neutral:
$x \cdot_e e  = \frac{\langle xee \rangle}{q(e)} = \frac{q(e)x}{q(e)} = x = e \cdot_e x$.
Let us show that $\sharp$ is a scalar involution.
By Lemma \ref{lemma:Q}, $\sharp = \sigma_e$ is an isometry of order $2$ fixing $e$.
It is an anti-automorphism: apply para-associativity twice to get
$$
y^\sharp \cdot x^\sharp =
\frac{ \langle \langle eye \rangle e \langle exe \rangle \rangle}{q(e)^3}
 = \frac{ \langle e \langle xey \rangle e\rangle}{q(e)^2} =
(xy)^\sharp .
$$
Finally, it is a scalar involution, since
$$
x+x^\sharp  = \frac{ b_q(x,e)}{q(e)} e \in \K e, \qquad
x x^\sharp = \frac{ \langle xe \langle exe \rangle \rangle}{q(e)^2} =
\frac{q(x) q(e) e}{q(e)^2} = N(x) e \in \K e.
$$

2. 
Start with a spherical quadratic space, fix the element $e$, and compute the
ternary map (with $\lambda = q(e)$)
associated to the new binary algebra:
$$
\lambda (xy^\sharp) z = \lambda  \frac{
\langle \langle x e \langle eye \rangle \rangle e z \rangle}
{q(e)^3} =\lambda  \frac{ \langle xyz \rangle}{ q(e)} = \langle xyz \rangle .
$$
Conversely, starting with a binary composition algebra with unit $1$, 
the binary product at $1$ is obviously
$\lambda \langle x1 y \rangle = \lambda xy$, the product we started with
(up to a scaling factor $\lambda$).

3.
Both maps are inverse of each other:  $L(v)e=v$ and $L(L_{a,b}(e))(x) = L_{a,b}(x) $,
and $L$ is a morphism:
$$
L(v \cdot_e w) = \frac{1}{q(e)^2} L_{\langle vew\rangle , e} 
=  \frac{1}{q(e)} L_{v,e} \circ \frac{1}{q(e)} L_{w,e} = 
L(v) \circ L(w) .
$$

4. As for 3., but now
$R_{\langle vew \rangle,e} = R_{w,e}\circ R_{v,e}$, 
so
$R(v \cdot_e w) = R(w) \circ R(v)
$.

5.
The proof of Item 2 did not require the assumption that $\langle --- \rangle$ satisfies
the ternary composition law.
Therefore, assuming (K) and (PA), we can use the bijection established in Item 3.
Now, the proof of Lemma \ref{la:compalg} shows that (TC) follows from
the properties of an associative composition algebra.

6.
This is immediate from the definitions:
a form over a field  is anisotropic iff $V^\times = V \setminus \{ 0 \}$.
\end{proof}

\begin{remark}
In general, 
the anti-isomorphism $\bfC_q^L \cong \bfC_q^R$ resulting from 3. and 4. depends on the choice of the
base point $e$. Only when $V$ is commutative it is independent of such choice, given by Theorem
\ref{th:involution1}.
\end{remark}

\subsection{Polarized spaces, and associative pairs.}\label{sec:polarized}
A quadratic space $(V,q)$ is called {\em polarized} if $V = V_1 \oplus V_2$ carries a direct sum
decomposition into
{\em totally isotropic} subspaces $V_i$ of $q$ (we won't assume that $q$ is non-degenerate).
We write $(x,y)=(x,0)+(0,y)$.
Then we have a bilinear form $b:V_1 \times V_2 \to \K$ given by
\begin{equation}
 b(x,y) := q((x,y)) = 
 q((x,0)+(0,y)) = q((x,y)) - q((x,0)) - q((0,y)) .
\end{equation}
The maps $b$ and $q$ are the same, but in the definition of $b$, the
 domain is considered as direct product of modules. 

\begin{Definition}
A {\em left polarized group spherical space} is a group spherical space that is a
 polarized  quadratic space $(V,q)=(V_1\oplus V_2,b)$,
 and
such that the trilinear map $\langle xyz \rangle$ satisfies the following two conditions:
\begin{enumerate}
\item
$\langle V  V V_i \rangle \subset V_i$ (meaning that
 $V_i$ for $i = 1,2$ are {\em left ideals}),
 \item
$\langle V_i V_i V \rangle = 0 $, i.e.,  $L_{V_i,V_i}=0$
(the left ideals operate trivially on $V$).
\end{enumerate}

\nin
In other terms,  the trilinear product $V^3 \to V$ has only the following four types of possibly
non-zero restriction:

(A)
$\qquad V_1 \times V_2 \times V_1 \to V_1$, and
$V_2 \times V_1 \times V_2 \to V_2$, and

(B) $\qquad V_1 \times V_2 \times V_2 \to V_2$, and
$V_2 \times V_1 \times V_1 \to V_1$.

\nin
{\em Right} polarized group spherical spaces are defined in the same way, using a decomposition into
totally isotropic {\em right} ideals $V_i$, such that $R_{V_i,V_i}=0$.
\end{Definition}

\begin{example}\label{ex:Hyp}
Let $V=\K \oplus \K$ with $q(x)=x_1 x_2$, so $V_1 = \K \oplus 0$, $V_2 = 0 \oplus \K$, and
 $b(x,y)=xy$. Recall from 
Example \ref{ex:hyp}, 
eqn. (\ref{eqn:hyperbolic}) that
$\langle xyz \rangle = \begin{pmatrix} x_1 y_2 z_1 \\ x_2 y_1 z_2 \end{pmatrix}$.
Clearly, conditions 1. and 2. are satisfied for $\langle xyz \rangle$.
 Moreover, restrictions of Type (B) are zero, and those of type (A) correspond to the usual product
 $rst$ of scalars. 
\end{example}

\begin{example}\label{ex:M1}
We decompose  $V = M(2,2;\K)$ into its two ``canonical left ideals''
(with $E_{ij}$: elementary matrix):
$$
V_1 = \K E_{11} \oplus \K E_{21} =
\{ \begin{pmatrix} x_1 & 0 \\ x_2 & 0 \end{pmatrix} \mid x_1,x_2 \in \K\},
$$
 (matrices with second column zero)
 and $V_2 = \K E_{22} \oplus \K E_{12}$ (matrices with first column zero).
The $V_i$ are totally isotropic for $q(X)=\det(X)$, and $V = V_1 \oplus V _2$, 
$$
q( x_1 E_{11} + x_2 E_{21} + y_1 E_{12}+y_2 E_{22}) =
x_1 y_2 - x_2 y_1 = [x,y] 
$$
is the canonical duality between two copies of $\K^2$. 
The trilinear product is given by
$\langle XYZ \rangle = XY^\sharp Z$, and
 $V_1$ and $V_2$ are {\em left} ideals, whence Property 1.
(If we decompose according to rows, we would get a decomposition into right ideals.)
We check 2.:  for $X,Y \in V_1$,
$$
XY^\sharp = \begin{pmatrix} x_1 & 0 \\ x_2 & 0\end{pmatrix}
\begin{pmatrix} 0 & 0 \\ -y_2 & y_1 \end{pmatrix} = 0,
$$
 and likewise for $X,Y \in V_2$.
(Computations are continued in Example \ref{ex:M2}.)
\end{example}

\begin{Definition}
(Cf.\ \cite{Lo75}, II.6.15).)
An {\em associative pair} is a pair $(\bA^+,\bA^-)$ of $\K$-modules together with two
trilinear maps
$\bA^\pm \times \bA^\mp \times \bA^\pm \to \bA^\pm$, $(x,y,z) \mapsto
\langle xyz \rangle_\pm$, such that the {\em para-associative law} is satisfied:
$$
\forall x, z, v \in \bA^\pm, u,y \in \bA^\mp : 
\qquad
\langle \langle xyz \rangle_\pm uv \rangle_\pm =
\langle x \langle u z y \rangle_\mp  v \rangle_\pm =
\langle xy \langle zuv \rangle_\pm  \rangle_\pm  .
$$
\end{Definition}

\begin{example}
Let $b:U \times V \to \K$ be bilinear; then $(U,V)$ is an associative pair, with
$\langle xyz \rangle_+ = b(z,y) x$ and
$\langle uvw \rangle_- = b(v,w)u$.
Proof by direct check:
$$
\langle \langle xyz \rangle_+  uv \rangle_+ =
b(z,y) b(x,u)v = b(z,y) \langle xuv\rangle_+ = 
\langle x \langle u z y \rangle_-  v \rangle_+ .
$$
\end{example}

\begin{lemma}\label{la:typeA}
The two restrictions of Type {\rm (A)} in a left polarized group spherical space form an associative pair
$(V_1,V_2)$, of the form given by the preceding example. 
\end{lemma}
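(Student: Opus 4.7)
The plan is first to check that the two Type~(A) restrictions satisfy the para-associative pair axiom, and then to identify these products explicitly with the ``bilinear form'' pair of the preceding example by taking $b := b_q\vert_{V_1\times V_2}$.

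Para-associativity of the pair is almost immediate. For $a,c,e\in V_1$ and $b,d\in V_2$, the ternary identity $\langle ab\langle cde\rangle\rangle = \langle a\langle dcb\rangle e\rangle = \langle \langle abc\rangle de\rangle$ from (PA) descends unchanged: the left-ideal property (condition~1) forces each intermediate bracket $\langle cde\rangle,\langle dcb\rangle,\langle abc\rangle$ to sit in the correct summand (determined by its last entry), so all three bracketings are Type~(A) products landing in $V_1$. The axiom on $\bA^-=V_2$ follows by symmetry. Next I want closed forms for the Type~(B) products as a lever. Polarizing the Kirmse identities gives the general relation $\langle xyz\rangle+\langle zyx\rangle = b_q(x,y)z+b_q(z,y)x-b_q(x,z)y$; for $x\in V_1$, $y,z\in V_2$, total isotropy kills $b_q(z,y)=0$, and condition~(2) kills $\langle zyx\rangle=L_{z,y}(x)=0$, yielding
\[
\langle xyz\rangle = b_q(x,y)\,z-b_q(x,z)\,y \qquad (x\in V_1,\ y,z\in V_2),
\]
and symmetrically for the other Type~(B) slice. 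Combined with Lemma~\ref{la:existence} and $q\vert_{V_1}=0$, this already gives the diagonal case $\langle xyx\rangle = b_q(x,y)\,x$ of the desired Type~(A) formula.

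To extend to arbitrary $z\in V_1$, I would compute $\langle\langle y''x'x\rangle\,y\,z\rangle$ in two ways, for $y''\in V_2$, $x,x',z\in V_1$, $y\in V_2$. Expanding the inner bracket via the Type~(B) formula yields $b_q(y'',x')\langle xyz\rangle-b_q(y'',x)\langle x'yz\rangle$. Using (PA) to shift the bracket outward gives $\langle y''x'\langle xyz\rangle\rangle$, to which the Type~(B) formula again applies (since $\langle xyz\rangle\in V_1$), producing $b_q(y'',x')\langle xyz\rangle-b_q(y'',\langle xyz\rangle)\,x'$. Equating and cancelling the common term $b_q(y'',x')\langle xyz\rangle$ gives the key identity
\[
b_q(y'',x)\,\langle x'yz\rangle \;=\; b_q\bigl(y'',\langle xyz\rangle\bigr)\,x'. \qquad(\star)
\]
Setting $x=z$ in $(\star)$ and substituting the diagonal value $\langle zyz\rangle = b_q(z,y)\,z$ converts this into $b_q(y'',z)\,\bigl[\langle x'yz\rangle-b_q(z,y)\,x'\bigr]=0$. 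Varying $y''\in V_2$ and invoking non-triviality of $b_q\vert_{V_1\times V_2}$, this forces $\langle x'yz\rangle=b_q(z,y)\,x'$, which is precisely the form of the example with $b:=b_q\vert_{V_1\times V_2}$; the companion slice on $V_2$ is handled identically.

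The step I expect to be the main obstacle is the final cancellation: one needs enough pairs $(y'',z)\in V_2\times V_1$ with $b_q(y'',z)$ non-vanishing to detect every element of $V_1$. Over a field this is immediate from the standing hypothesis $V^\times\neq\emptyset$ (a totally degenerate pairing together with isotropy of both $V_i$ would force $b_q\equiv 0$, which is incompatible with $V^\times\neq\emptyset$ outside characteristic~$2$). Over a general commutative ring $\K$, and especially in the characteristic-$2$ case where $q$ may be nonzero while $b_q$ vanishes, a more careful argument is needed --- for instance, showing that the $x\in V_1$ with $b_q(V_2,x)\neq 0$ generate $V_1$, or handling possible torsion directly.
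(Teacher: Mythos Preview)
Your argument is correct up to the point you yourself flag, but the final non-degeneracy step is a genuine gap, not a technicality. Over a general ring, from $b_q(y'',z)\bigl[\langle x'yz\rangle - b_q(z,y)x'\bigr]=0$ for all $y''\in V_2$ you cannot conclude that the bracket vanishes unless you know something about regularity of the scalars $b_q(y'',z)$, and the lemma is stated without any such hypothesis. In characteristic~$2$ the situation is even worse, as you note.

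The paper avoids this entirely by a much shorter route that you missed: apply the \emph{right} Kirmse identity directly to a mixed element. Take $z\in V_1$ and $x=x_1+x_2$ with $x_i\in V_i$. Then
\[
\langle z\,x\,x\rangle \;=\; q(x)\,z \;=\; b(x_1,x_2)\,z,
\]
since $q(x_1)=q(x_2)=0$. Expanding the left side into four terms, three of them vanish immediately: $\langle z\,x_1\,x_1\rangle$ and $\langle z\,x_1\,x_2\rangle$ are killed by condition~(2) ($L_{V_1,V_1}=0$), and $\langle z\,x_2\,x_2\rangle = q(x_2)z = 0$ by Kirmse and isotropy. What remains is precisely the Type~(A) product
\[
\langle z\,x_2\,x_1\rangle \;=\; b(x_1,x_2)\,z,
\]
valid for arbitrary $z,x_1\in V_1$, $x_2\in V_2$. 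The companion formula on $V_2$ follows the same way. No polarization, no para-associativity, no non-degeneracy is needed.

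Your detour through the Type~(B) formulas and the identity $(\star)$ is not wrong as far as it goes, and $(\star)$ is a nice relation; but it is unnecessary here and leaves you with an obstruction that the direct approach simply does not encounter.
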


\begin{proof} 
Let $x_i \in V_i$, $x=x_1+x_2$, and $z \in V_1$. The right Kirmse identity gives
$$
\langle z  x x \rangle = q(x)z = b(x_1,x_2) z .
$$
On the other hand,
$\langle z,x_1+x_2,x_2+x_2 \rangle = \langle zx_1 x_2 \rangle + \langle z x_2 x_1 \rangle =
\langle z x_2 x_1 \rangle$, whence
$\langle z uv \rangle_+ = b(v,u )z$.
In the same way, with $z \in V_2$, we get
$\langle z v u \rangle_- = b(v,u)z$, so $(V_1,V_2)$ with restriction of type (A) forms indeed an
associative pair of the form given in the example.
\end{proof}

\begin{theorem}\label{th:polarized1}
Let $(V,q)=(V_1\oplus V_2,b)$ be a left polarized group spherical space.
Then the ternary map of $V$ is uniquely determined by the bilinear form $b$, and it is given by,
writing $x=x_1 + x_2=\begin{pmatrix}x_1\\ x_2 \end{pmatrix}$, $x_i \in V_i$, etc.,
$$
\langle xyz \rangle =
\begin{pmatrix}
b(z_1 ,y_2) x_1 
- b(z_1,x_2) y_1 + b(y_1, x_2) z_1
\\
 b(y_1,z_2) x_2  - b(x_1,z_2) y_2
 + b(x_1,y_2) z_2
\end{pmatrix} .
$$
Conversely, for every bilinear form $b:V_1 \times V_2 \to \K$, we define a trilinear map on 
$V = V_1 \oplus V_2$ by the preceding formula. Then:
\begin{enumerate}
\item  This trilinear map satisfies the left and
right Kirmse identities.
\item
Para-associativity of the trilinear product is equivalent to the following property 
{\rm (BA)} of $b$:
for all $x^1,x^2 \in V_1$ and all $y^1,y^2,y^3 \in V_2$,
\begin{align*}
0  = &  \quad \,  b(x^1 , y^1) b(x^2,y^2) y^3 - b(x^1,y^2) b(x^2,y^1) y^3 \\
&  + b(x^1, y^3) b(x^2,y^1)  y^2 - b(x^1, y^1) b(x^2,y^3) y^2 \\
& +  b(x^1,y^3) b(x^2,y^2) y^1 - b(x^1,y^2) b(x^2, y^3) y^1 ,
\end{align*}
and similarly for r\^oles of $V_1$ and $V_2$ interchanged.
Condition {\rm (BA)}  can also be written, for $\{ i,j \} = \{ 1,2 \}$
$$
\forall y^1,y^2 \in V_i, \forall  z^1,z^2,z^3 \in V_j :
\quad
\sum_{\sigma \in \s_3} {\mathrm{sgn}} (\sigma) \cdot  b(y^1,z^{\sigma(1)}) \,   b(y^2,z^{\sigma(2)}) \, z^{\sigma(3)}  = 0 .
$$
\end{enumerate}
\end{theorem}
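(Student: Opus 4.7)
The plan is to prove the two halves of the theorem separately, with most of the work falling on the equivalence (PA) $\iff$ (BA).

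\emph{Forward direction (uniqueness of the formula).} By trilinearity, it suffices to evaluate $\langle v w x\rangle$ on pure tensors in $V_i \times V_j \times V_k$. The left-ideal condition together with $L_{V_i,V_i}=0$ kills all but four of the eight such products, and Lemma \ref{la:typeA} identifies the two type (A) values as $\langle z u v\rangle = b(v,u) z$ with $z, v$ in one $V_i$ and $u$ in the other. To pin down the two type (B) values, I would substitute $x = z_1 + x_2$ (with $z_1 \in V_1$, $x_2 \in V_2$) into the left Kirmse identity $\langle xxy\rangle = q(x)y$ with $y = y_1 \in V_1$. Using $q(z_1+x_2) = b(z_1,x_2)$ and noting that $\langle z_1 z_1 y_1\rangle$ and $\langle x_2 x_2 y_1\rangle$ both vanish (either by left Kirmse, since $z_1, x_2$ are isotropic, or by $L_{V_i,V_i}=0$), the polarized identity reads
\[
\langle z_1 x_2 y_1\rangle + \langle x_2 z_1 y_1\rangle = b(z_1,x_2)\, y_1 .
\]
Substituting the known type (A) value $\langle z_1 x_2 y_1\rangle = b(y_1,x_2)\, z_1$ yields $\langle x_2 z_1 y_1\rangle = b(z_1,x_2)\, y_1 - b(y_1,x_2)\, z_1$. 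Polarizing the right Kirmse identity analogously on the $V_2$-side determines the other type (B) piece, and assembling the four non-zero pieces reproduces the displayed formula.

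\emph{Converse (starting from $b$).} That the formula satisfies both Kirmse identities is an immediate substitution relying only on bilinearity of $b$: in each of the two components, setting the two outer (resp.\ the two inner) arguments equal collapses the three-term expression to $b(x_1,x_2)y = q(x)y$. For the equivalence (PA) $\iff$ (BA), I would expand each of the three expressions $\langle ab\langle cde\rangle\rangle$, $\langle a\langle dcb\rangle e\rangle$ and $\langle\langle abc\rangle de\rangle$ using the formula, and compare $V_1$- and $V_2$-components. By multilinearity it suffices to test on pure tensors indexed by $(i_a, \ldots, i_e) \in \{1,2\}^5$. Parity patterns in which some pair of indices produces a Kirmse-collapse automatically yield identical expressions on all three sides; the only genuinely new constraint comes from the ``generic'' parity with exactly two indices in $V_1$ and three in $V_2$ (and its $V_1 \leftrightarrow V_2$ mirror). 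In this case each of the three sides produces a signed sum of six scalar-vector monomials of the form $b(x^\alpha, y^\beta)\,b(x^\gamma, y^\delta)\, y^\varepsilon$, and the three-way equality collapses precisely to the identity (BA). The $\s_3$-reformulation then follows by antisymmetrizing in the three $V_2$-arguments.

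\emph{Main obstacle.} The core technical difficulty is the generic-parity expansion of (PA): with exactly two $V_1$-arguments among the five, each of the three forms of (PA) unfolds into six $b$-monomials, and the three-way equality collapses to the six-term antisymmetric relation (BA). The cleanest bookkeeping is to classify the monomials by which $V_1$-argument contributes the coefficient $b(x^1, \cdot)$ and which contributes $b(x^2, \cdot)$, and then to match permutations of the three $V_2$-arguments across the three forms of (PA). Careful sign tracking is the delicate part.
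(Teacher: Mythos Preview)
Your approach matches the paper's almost exactly: uniqueness via polarized Kirmse plus Lemma \ref{la:typeA}, Kirmse by direct substitution, and (PA)$\iff$(BA) by restriction to homogeneous $5$-tuples. One refinement worth noting: your dichotomy ``Kirmse-collapse versus generic $2$--$3$ split'' is coarser than what actually happens. Among parity types with exactly two arguments in $V_1$ and three in $V_2$, the paper observes that the alternating type $12121$ (pure type~(A)) is para-associative by Lemma \ref{la:typeA}, and that the type $\langle 21\langle 122\rangle\rangle$ is also para-associative \emph{without} invoking (BA); only the type $\langle 12\langle 122\rangle\rangle$ (and its $V_1\leftrightarrow V_2$ mirror) is genuinely equivalent to (BA). So not every $2$--$3$ pattern contributes a new constraint --- several of them hold automatically even though no adjacent pair of slots shares parity, so your ``Kirmse-collapse'' heuristic does not quite separate the trivial from the non-trivial cases. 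This does not break your argument (for $\Leftarrow$ the extra cases are simply easier, and for $\Rightarrow$ you may specialize to $12122$), but you should expect the case analysis to be a bit finer than your sketch suggests.
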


\begin{proof}
Assuming the Kirmse identities, we get
 for $x = x_1 + x_2$, $z \in V_1$:
$$
\langle x x z \rangle = q(x) z = b(x_1,x_2)z,
$$
$\langle x_1 x_2 z \rangle + \langle x_2 x_1 z \rangle =
b(z,x_2) x_1 + \langle x_2 x_1 z \rangle$, whence
$$
\langle u_2 x_1 z_1 \rangle = b(x_1,u_2) z_1 - b(z_1,u_2) x_1
$$
(In particular, for $x_1 = z_1$ we see that this is alternating).
Using this, we expand
$\langle xyz \rangle =
\langle x_1 + x_2,y_1 + y_2,z_1 + z_2 \rangle$ and get the formula announced in the theorem.

Conversely, for any bilinear $b$, letting $x=y$ or $y=z$, we get the Kirmse identities.

Para-associativity  holds for all $5$-tuples  iff it holds for all $5$-tuples of homogeneous elements.
For tuples  of parity type $12121$ or $21212$ (restriction of type (A))
it holds without using the condition on $b$ (by Lemma \ref{la:typeA}).
By the same type of computation, for tuples of type
$\langle 21 \langle 122 \rangle \rangle$, para-associativity holds without using the condition on $b$.
For the type
$\langle 12 \langle 122\rangle \rangle$, para-associativity is equivalent to the condition from the
theorem, by direct computation. 
\end{proof}

\begin{theorem}\label{th:123}
We use notation from the preceding theorem.
\begin{enumerate}
\item
If $b$ is of rank $1$, then Condition {\rm (BA)} holds.
\item
If $b$ is of rank greater than $2$, then Condition {\rm (BA)} does not hold.
\item
Assume $b$ is of rank $2$ and non-degenerate.
Then Condition {\rm (BA)} holds.
\end{enumerate}
\end{theorem}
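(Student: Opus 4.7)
The plan is to rewrite Condition (BA) in a compact form and then treat the three cases separately. For any two linear forms $f_1,f_2 \in V_2^*$ set
\[
T_{f_1,f_2}(y^1,y^2,y^3) := \sum_{\sigma \in \s_3} \mathrm{sgn}(\sigma)\, f_1(y^{\sigma(1)})\, f_2(y^{\sigma(2)})\, y^{\sigma(3)}.
\]
Then (BA) asserts exactly that $T_{f_1,f_2}$ vanishes identically whenever $f_1,f_2$ lie in the image $W \subset V_2^*$ of the canonical map $V_1 \to V_2^*$, $x \mapsto b(x,\cdot)$. A direct reindexing by composition with a transposition $\tau$ (absorbing the sign via $\mathrm{sgn}(\tau\sigma)=-\mathrm{sgn}(\sigma)$) shows that $T_{f_1,f_2}$ is \emph{alternating} in the three arguments $y^1,y^2,y^3$. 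This single observation powers all three items.

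For Item (1): rank $b = 1$ forces $W = \K\psi$ for a single form $\psi \in V_2^*$, so every $T_{f_1,f_2}$ is a scalar multiple of $T_{\psi,\psi}$. To show $T_{\psi,\psi}\equiv 0$ I pair each $\sigma \in \s_3$ with $\sigma\circ(1\,2)$: the factor $\psi(y^{\sigma(1)})\psi(y^{\sigma(2)})$ is symmetric in positions $1$ and $2$ and $y^{\sigma(3)}$ is unchanged, while $\mathrm{sgn}$ flips, so the two terms in each pair cancel.

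For Item (3): non-degeneracy combined with rank two forces $V_1$ and $V_2$ to be free $\K$-modules of rank two and the induced map $V_1 \to V_2^*$ to be an isomorphism; in particular $W = V_2^*$, so (BA) must be checked on all $f_1,f_2$. But since $T_{f_1,f_2}$ is alternating trilinear on $V_2$ with values in $V_2$, it factors through $\Lambda^3 V_2$, and this exterior power is zero for a free module of rank two. Hence $T_{f_1,f_2}\equiv 0$ automatically.

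For Item (2): the plan is to exhibit an explicit counterexample to (BA). Using that $b$ has rank $\geq 3$, I pick three elements $x^1,x^2,x^3 \in V_1$ whose images $f_i := b(x^i,\cdot)$ are linearly independent in $V_2^*$, and extract a ``dual configuration'' $y^1,y^2,y^3 \in V_2$ with $f_i(y^j) = \delta_{ij}$. Substituting into $T_{f_1,f_2}(y^1,y^2,y^3)$, the only permutation contributing a nonzero term is $\sigma = \mathrm{id}$ (any other forces $f_1(y^{\sigma(1)})=0$ or $f_2(y^{\sigma(2)})=0$), giving $T = y^3$, which is nonzero since $y^1,y^2,y^3$ are linearly independent. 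This contradicts (BA). The main obstacle here is the mild ring-theoretic subtlety: over a field the dual configuration is immediate linear algebra, but over a general commutative ring one needs ``rank $\geq 3$'' to be strong enough to realize an invertible $3\times 3$ sub-matrix of $b$, which is the natural reading of the hypothesis and can be arranged by localizing at a minor if necessary.
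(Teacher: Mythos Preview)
Your argument is correct and follows essentially the same strategy as the paper: for Item~1 you exploit that a rank-$1$ form makes each $2\times 2$ minor vanish (your pairing $\sigma\leftrightarrow\sigma(1\,2)$ is exactly the paper's ``each line is zero''); for Item~2 both you and the paper produce a nonvanishing $3\times 3$ determinant from the rank-$\ge 3$ hypothesis, you via a dual configuration, the paper directly. For Item~3 the paper gives two arguments: (a) an explicit identification with $M(2,2;\K)$, and (b) the observation that all $3\times 3$ determinants of $b$ vanish, combined with non-degeneracy to separate points. Your $\Lambda^3 V_2=0$ formulation is a clean repackaging of (b) --- since $T_{f_1,f_2}$ is genuinely alternating (not merely skew), it factors through the exterior cube and dies for dimension reasons, bypassing the separation step.
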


\begin{proof}
1. 
If $b$ is of rank $\leq 1$, then there exist linear forms $\phi_i:V_i \to \K$
such that $b(x,y)=\phi_1(x) \phi_2(y) $, and then each of the three individual terms
(each line of the displayed equation) is zero, so (BA) holds.

2. 
Assume the rank of $b$ is greater or equal than $3$.
Then there exist $y^i$ and $z^j$, $i,i=1,2,3$, such that
$\det(b(y^i,z^j)_{i,j=1,2,3}) \not= 0$.
Therefore the right-hand side of (BA) must be
non-zero, and thus the trilinear product is not para-associative.

3. 
Assume $b$ of rank $2$ and non-degenerate, so
 $V_1$ and $V_2$ are two-dimensional.
In this case, two different proofs are possible:

(a)
Choosing appropriate bases in $V_1$ and $V_2$, the form 
$b$ is given by $b(x,y)=x_1 y_2 - x_2 y_1 =[x,y]$.
Then the formulae reduce to those from Example \ref{ex:M2} below,
describing the matrix algebra $M(2,2;\K)$ which clearly is para-associative.

(b)
A different proof is given by noticing that
 $\det(b(y^i,z^j)_{i,j=1,2,3}) = 0$ for all choices of $y^i,z^j$,
$i,j=1,2,3$, since $b$ is of rank $2$.
Since  $b$ is non-degenerate, this implies that
the expression from (BA)  is zero, for 
all choices of $y^i,z^j$,
$i=1,2,3$, $j=1,2$, whence para-associativity.
\end{proof}

\begin{example}\label{ex:M2}
Assume $V_1,V_2$ two-dimensional and
$b:V_1 \times V_2$ of rank $2$, given by
$b(x,y) = x_1 y_2 - x_2 y_1 =[x,y]$.
Using the identity $[x,y]z+[y,z]x+[z,x]y=0$ (Remark \ref{rk:dreier}), the formula for the
trilinear product on $V = V_1 \oplus V_2$ reduces to
\begin{equation}\label{eqn:*}
\langle xyz \rangle =
\begin{pmatrix} [z_1,y_2] x_1 + [z_1,y_1] x_2 \\
[z_2,y_1] x_2 + [z_2,y_2] x_1 \end{pmatrix} .
\end{equation}
This is indeed the formula for the triple matrix product  $XY^\sharp Z$ in $M(2,2,\K)$.
Indeed, with notation from Exemple \ref{ex:M1}, we 
 compute restrictions of Type (A), $V_1 \times V_2 \times V_1 \to V_1$: 
\begin{equation} \label{eqn:Pair!}
 \begin{pmatrix} x_1 & 0 \\ x_2 & 0\end{pmatrix}
  \begin{pmatrix}  0 & y_1 \\ 0 & y_2 \end{pmatrix}^\sharp
   \begin{pmatrix} z_1 & 0 \\ z_2 & 0\end{pmatrix}
   = (y_2 z_1 - y_1 z_2)   \begin{pmatrix}  x_1 & 0  \\  x_2 & 0  \end{pmatrix} =
   [z,y]   \begin{pmatrix}  x_1 & 0  \\  x_2 & 0  \end{pmatrix} ,
\end{equation}
and of Type (B), $V_1 \times V_2 \times V_2 \to V_2$:
\begin{equation} \label{eqn:Pair!!}
 \begin{pmatrix} x_1 & 0 \\ x_2 & 0\end{pmatrix}
  \begin{pmatrix}  0 & y_1 \\ 0 & y_2 \end{pmatrix}^\sharp
   \begin{pmatrix} 0 & z_1  \\ 0 & z_2 \end{pmatrix}
   = (y_2 z_1 - y_1 z_2)   \begin{pmatrix}  0& x_1   \\  0 & x_2   \end{pmatrix} =
   [z,y]   \begin{pmatrix}  0& x_1  \\  0& x_2   \end{pmatrix} .
\end{equation}
The ternary matrix product can be decomposed in four components: 
write $X = X_1 + X_2$ with $X_i \in V_i$. Then
$$
XY^\sharp Z =
(X_1 + X_2)(Y_1+Y_2)^\sharp(Z_1 + Z_2)= 
( X_1 Y_2^\sharp Z_1 + X_2 Y_1^\sharp Z_1) +
( X_2 Y_1^\sharp Z_2 + X_1 Y_2^\sharp Z_2).
$$
$$
= [Z_1,Y_2] X_1 + [Z_2,Y_1] X_2 +
[Z_2, Y_2] (X_1)_2 +
[Z_1,Y_1](X_2)_1 ,
$$
and so we recover (\ref{eqn:*}). 
As mentioned above, this concludes Proof (a). 
\end{example}

\begin{remark}
What can one say in the remaining case ($b$ of rank $2$, but degenerate, so $V_1$ or
$V_2$ is of dimension bigger than $2$) ?
One may conjecture that Condition (BA) still holds -- see examples to be given next.
\end{remark}

\section{New examples:  representations, and split null extensions}\label{sec:representations}

\subsection{Representations.}
Following general ideas due to 
Eilenberg and Jacobson, O.\ Loos develops in \cite{Lo75} his theory of
Jordan Pairs by using {\em representations} as a tool.
A {\em general representation of an algebraic structure} $A$ should not be thought of
as some ``morphism into a matrix object'', but rather as a ``vector bundle over $A$
in a suitable category'', see \cite{BeD} where this geometric approach has been developed and 
applied to  Lie triple systems. 
The basic idea is already visible in the following:

\begin{example}\label{ex:C}
Let $V = \bC$ with its usual norm and binary product, and $W$ any complex vector space,
which we write as right $\bC$-module.
Then $\bC \oplus W$ with binary product and (degenerate) quadratic form $\widetilde q$,
$$
(v,w) \cdot (v',w') = (vv', w' v + w \overline{v'}), \quad \widetilde q ((v,w )) = q(v) = v \, \overline v
$$
is again a (generalized!) composition algebra: 
indeed, the product is associative:
\begin{align*}
((v,w) (v',w'))(v'',w'') &= (vv', w' v + w \overline{v'} )(v'',w'') =
(vv'v'', w'' vv' + (w' v + w \overline{v'} ) \overline{v''})
\\
&=  (vv'v'', w'' vv' + w' v \overline{v''} + w \overline{v'}  \overline{v''}) \\
&= (v,w) (v'v'', w''v' + w' \overline{v''}) = 
(v,w) ((v',w')(v'',w'') ) 
\end{align*}
(Note that commutativity of $\bC$ is heavily used.)
With
$(v,w)^\sharp = (\overline v, - w)$ we have
$$
(v,w) (v,w)^\sharp = (v \, \overline v , 0) = \widetilde q(v,w) 1 , \qquad
(v,w) + (v,w)^\sharp = (v +\overline v, 0) \in \R 1.
$$ 
Therefore we have a real group spherical space of real dimension $2 + 2 \dim_\bC W$.
It is not commutative, unless $W = 0$.
\end{example}

\begin{Definition}\label{def:rep}
Let $(V,q)$ be a group spherical  space, with right spiration algebra
$\bfC_q^R$.
A {\em right $(V,q)$-module} is simply a  $\bfC_q^R$-module $W$, i.e., a linear (left) action
of the associative algebra $\bfC_q^R$ on $V$.

The {\em split null extension of $V=:V_0$ by the right $(V_0,q)$-module $W=V_1$} is the linear space
$\widetilde V:= V_0 \oplus V_1$, together with the bilinear form
$\widetilde q:= q \oplus 0$ (i.e.,
$\widetilde q((x_0,x_1)) = q(x_0)$), and the trilinear map given by
$$
\langle (x_0,x_1) \vert (y_0,y_1)\vert (z_0,z_1) \rangle :=
\Bigl(
\langle x_0 y_0 z_0 \rangle,
R_{y_0,z_0} x_1 - R_{x_0,z_0} y_1 + R_{x_0,y_0} z_1
 \Bigr) .
$$
\end{Definition}

\begin{theorem}\label{th:splitnull}
For every {\em commutative} group spherical space $(V,q)$, and
right $(V,q)$-module $W$, the split null extension $V \oplus W$ is again a
group spherical  space.
It is not commutative, unless 

-- either, $V$ is totally symmetric, so $V = \K$, and $W$ any $\K$-module,

-- or $V$ is commutative but not totally symmetric, and
$W = 0$.
\end{theorem}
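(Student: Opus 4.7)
The plan is to verify the three defining properties (K), (TC), (PA) of a group spherical space for $\widetilde V = V \oplus W$, and then analyze commutativity. The key input throughout is that by Theorem \ref{th:spiration1}, commutativity of $V$ makes the algebra $\bfC_q^R$ commutative, and $W$ is a module over this commutative algebra.

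First, $\widetilde V^\times$ contains all $(e,0)$ with $e \in V^\times$, so it is non-empty, and (TC) is immediate: $\widetilde q(\langle \tilde x \tilde y \tilde z \rangle) = q(\langle x_0 y_0 z_0\rangle) = q(x_0) q(y_0) q(z_0) = \widetilde q(\tilde x) \widetilde q(\tilde y) \widetilde q(\tilde z)$. For the Kirmse identities (K), setting $(x_0,x_1)=(y_0,y_1)$ makes the first coordinate $\langle x_0 x_0 z_0 \rangle = q(x_0) z_0$ by (K) for $V$, and the second coordinate collapses to $R_{x_0, z_0} x_1 - R_{x_0, z_0} x_1 + R_{x_0, x_0} z_1 = q(x_0) z_1$, using $R_{a,a} = q(a) \id$ from Theorem \ref{th:spiration1} (a relation in $\bfC_q^R$, hence valid on $W$). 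The right Kirmse identity is symmetric.

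The main step is para-associativity. I would compute both the second coordinate of $\langle \tilde a \tilde b \langle \tilde c \tilde d \tilde e \rangle \rangle$ and of $\langle \langle \tilde a \tilde b \tilde c \rangle \tilde d \tilde e \rangle$, and then match the coefficients of $a_1, b_1, c_1, d_1, e_1 \in W$ separately. For example, the coefficient of $a_1$ on the left is $R_{b_0, \langle c_0 d_0 e_0 \rangle}$, and on the right it is $R_{d_0, e_0} R_{b_0, c_0}$; by the product rule $R_{d_0, e_0} R_{b_0, c_0} = R_{d_0, \langle e_0 b_0 c_0\rangle}$ (Theorem \ref{th:spiration1}) combined with the swap $R_{d_0, e_0} R_{b_0, c_0} = R_{b_0, c_0} R_{d_0, e_0} = R_{b_0, \langle c_0 d_0 e_0\rangle}$ coming from commutativity of $\bfC_q^R$, these agree. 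The other four coefficients of that identity reduce analogously, and the third bracketing $\langle \tilde a \langle \tilde d \tilde c \tilde b\rangle \tilde e\rangle$ is handled in the same way. The main obstacle is the careful bookkeeping: five coefficient identities for each of two PA equalities, each requiring the product rule of Theorem \ref{th:spiration1} together with commutativity of $\bfC_q^R$ applied at precisely the right step. Without commutativity of $V$, the swap would fail and PA could not be salvaged.

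For the commutativity dichotomy, comparing $\langle \tilde x \tilde y \tilde z\rangle$ and $\langle \tilde z \tilde y \tilde x\rangle$, the first coordinates agree by commutativity of $V$; equating the second coordinates and extracting coefficients of $x_1, y_1, z_1$ yields the single module-level condition $R_{a,b} = R_{b,a}$ on $W$ for all $a,b \in V$. This is vacuous if $W = 0$, and if $V$ is totally symmetric then $R_{a,b} = R_{b,a}$ already holds in $\bfC_q^R$, so every module works. Conversely, if $V$ is commutative but not totally symmetric and $W \ne 0$ carries a module structure on which some $R_{a,b} - R_{b,a}$ acts non-trivially, then $\widetilde V$ fails to be commutative; the identification $V = \K$ in the totally symmetric case follows from the structural fact (discussed earlier in the paper, where total symmetry forces $b_q(x,y)^2 = 4 q(x) q(y)$) that the only totally symmetric commutative group spherical space is the unarion case.
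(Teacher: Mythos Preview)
Your argument is correct but takes a different route from the paper for para-associativity. The paper avoids the direct ternary check entirely: it fixes a base point $e \in V^\times$ (normalized so $q(e)=1$), writes down the binary homotope product on $\widetilde V$ at $(e,0)$, namely $(x_0,x_1)\cdot(z_0,z_1) = (x_0 z_0,\ x_1 z_0^\sharp + z_1 x_0)$, and verifies by hand (exactly as in Example \ref{ex:C}) that this is associative with scalar involution $(y_0,y_1)^\sharp = (y_0^\sharp,-y_1)$; the group-spherical structure then drops out of Lemma \ref{la:compalg}, and one checks that $x\cdot y^\sharp\cdot z$ recovers the trilinear map of Definition \ref{def:rep}. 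Your direct coefficient-matching via the operator identities of Theorem \ref{th:spiration1} together with commutativity of $\bfC_q^R$ is more intrinsic (no base point chosen) and makes explicit where commutativity of $V$ enters at each step; the paper's binary detour trades this for a much shorter computation (one binary associativity check instead of ten operator identities). On the commutativity dichotomy the paper argues via the involution: $V$ totally symmetric iff $\sharp$ is trivial, and non-trivial $\sharp$ makes the binary formula above visibly non-commutative when $W\ne 0$; your condition $R_{a,b}=R_{b,a}$ on $W$ is the ternary translation of the same thing. Both versions leave implicit why a nonzero module cannot annihilate every $R_{a,b}-R_{b,a}$ when $\sharp$ is non-trivial, so neither is fully rigorous on that point. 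Your justification of ``totally symmetric $\Rightarrow V=\K$'' via $b_q(x,y)^2 = 4q(x)q(y)$ is not actually in the paper prior to this theorem; the paper simply asserts it and effectively defers to Theorem \ref{th:structure-commutative}.
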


\begin{proof}
Left and Right Kirmse (K) follow by direct but slightly tricky computation, where commutativity
of $(V,q)$ is used in essential way. 
A direct check of para-associativity would be rather lengthy, therefore we shall
use the detour via the binary product, as in Example \ref{ex:C}:
fix $e \in V^\times$, without loss we may assume $q(e)=1$, so $q=N$, and
$\langle abc \rangle = a b^\sharp c = c b^\sharp a$, by commutativity, and the binary
product in $V \oplus W$ at $(y_0,y_1)=(e,0)$ is
$$
(x_0,x_1) \cdot (z_0,z_1) = ( x_0 z_0, x_1 z_0^\sharp + z_1 x_0) .
$$
As in the example, one checks by direct computation that this product is again associative,
and that $\sharp$ defined by
$(y_0,y_1)^\sharp = (y_0^\sharp, - y_1)$
is an anti-automorphism of order $2$ which is a scalar involution,
with associated norm $\widetilde q$.
Therefore we have again a group  spherical space $(V \oplus W,\tilde q)$.
Computing its ternary product
$(x_1,x_2) (y_1,y_2)^\sharp (z_1,z_2)$, we get the expression of the trilinear map from
Definition \ref{def:rep}.

Finally, if $V$ is totally symmetric, then necessarily
$V = \K$ and $\langle xyz \rangle = \lambda xyz$ for some $\lambda \in \K^\times$,
with trivial involution $\sharp$. Therefore the new product is again commutative
(but the new involution won't be trivial, unless $W=0$).
If $V$ is not totally symmetric, then $\sharp$ is non-trivial, and the new product
is not commutative (unless $W=0$).
\end{proof}

\subsection{The extended Minkowski plane.}
Let us consider split null extensions of the plane described in Example
\ref{ex:product}: $V_0 = \K^2$, 
\begin{equation}\label{eqn:Minko}
\langle x \vert y \vert z \rangle  = \phi(x) \psi(y) z + \psi (z) \phi(y) x - \phi(x) \psi(z) y.
\end{equation}
When $\phi$ and $\psi$ are linearly independent, then this is a hyperbolic plane,
with binary algebra isomorphic to a direct product $\K \times \K$.
The two projections of this algebra define one-dimensional representations, 
and using these we can define representations of any dimension, thus defining
split null extensions of any desired dimension. Remarkably, they
can all be defined by (\ref{eqn:Minko}):

\begin{theorem}\label{th:Minko}
Let $V$ by a $\K$-module, $\phi $ and $\psi$ be two linear forms $V \to \K$, and
$q(x) = \phi(x) \psi(x)$.
Then the trilinear product on $V$ defined by {\rm (\ref{eqn:Minko})} 
defines on $(V,q)$ the structure of a group spherical space.

It  is isomorphic to the split null extension by
$V_1 :=\ker(\phi) \cap \ker(\psi)$ of some complementary subspace
$V_0$ of $V_1$ in $V$. 

When $\phi = \psi$, then $V$ is commutative, else it is non-commutative, unless
$V_1 = 0$.
\end{theorem}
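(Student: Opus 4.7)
The plan proceeds in three steps: (i) verify the axioms (K), (PA), (TC) directly for the formula (\ref{eqn:Minko}); (ii) identify the resulting structure with a split null extension in the sense of Definition \ref{def:rep}; (iii) analyze commutativity.

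For (i), I first compute how the linear forms $\phi$ and $\psi$ act on a triple product. A direct expansion of (\ref{eqn:Minko}), in which two of the three monomials cancel in each case, yields the key identities
\begin{equation}
\phi(\langle xyz\rangle) = \phi(x)\,\psi(y)\,\phi(z), \qquad \psi(\langle xyz\rangle) = \psi(x)\,\phi(y)\,\psi(z).
\end{equation}
Substituting $x=y$ or $y=z$ in (\ref{eqn:Minko}) then gives both Kirmse identities; multiplying the two displayed identities gives the ternary composition law. For para-associativity, I expand each of $\langle ab\langle cde\rangle\rangle$, $\langle\langle abc\rangle de\rangle$, and $\langle a\langle dcb\rangle e\rangle$ using (\ref{eqn:Minko}) together with the above identities applied to the inner product: each expansion yields five monomials, each a product of four of the values $\phi(\cdot),\psi(\cdot)$ times one of the vectors $a,b,c,d,e$, and a term-by-term inspection shows the three expansions coincide.

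For (ii), I set $V_1 := \ker\phi \cap \ker\psi$ and choose any linear complement $V_0$ of $V_1$ in $V$. Since $(\phi,\psi)$ is injective on $V_0$, we have $\dim V_0 \leq 2$. A one-line check from (\ref{eqn:Minko}) shows that $V_1$ is an ideal and $V_0$ is a subspace; moreover $q_0 := q|_{V_0}$ still factors as $\phi\cdot\psi$. By Theorem \ref{th:ternary} the restriction of $\langle\cdot\cdot\cdot\rangle$ to $V_0$ is the unique $q_0$-compatible trilinear product, so $V_0$ is a commutative group spherical space. I then define the right $\mathbf{C}_{q_0}^R$-module structure on $V_1$ by declaring $R^{V_0}_{y_0,z_0}\cdot x_1 := \phi(y_0)\psi(z_0)\,x_1$; this is well-defined because in a basis where $q_0(x)=x_1 x_2$ the operator $R^{V_0}_{y_0,z_0}$ has matrix $\mathrm{diag}(\psi(y_0)\phi(z_0),\phi(y_0)\psi(z_0))$, so any linear relation among the $R^{V_0}_{y_0,z_0}$ forces the corresponding relation among the scalars $\phi(y_0)\psi(z_0)$; multiplicativity of the action is immediate from the $\phi,\psi$-identities of step (i). Finally, expanding $\langle xyz\rangle$ for $x=x_0+x_1$ etc.\ by multilinearity and using $\phi(x_1)=\psi(x_1)=0$, I find that the $V_0$-component equals $\langle x_0 y_0 z_0\rangle$ and the $V_1$-component equals $\phi(y_0)\psi(z_0)x_1 - \phi(x_0)\psi(z_0)y_1 + \phi(x_0)\psi(y_0)z_1$, matching Definition \ref{def:rep} exactly.

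For (iii), when $\phi=\psi$ the formula (\ref{eqn:Minko}) is manifestly symmetric under the swap $x\leftrightarrow z$, so $V$ is commutative. Conversely, when $\phi$ and $\psi$ are linearly independent and $V_1\neq 0$, direct expansion gives, for any $z \in V_1$,
\begin{equation}
\langle xyz\rangle - \langle zyx\rangle = (\phi(x)\psi(y)-\psi(x)\phi(y))\,z,
\end{equation}
since the other two brackets of the difference vanish once $\phi(z)=\psi(z)=0$; choosing $x,y\in V$ with $\phi(x)\psi(y)\neq\psi(x)\phi(y)$ and any nonzero $z\in V_1$ exhibits non-commutativity. The main obstacle in the plan is the bookkeeping of monomials in the para-associativity verification of step (i); once the two $\phi,\psi$-identities are in hand, the remaining assertions reduce to matching formulas.
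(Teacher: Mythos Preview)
Your proof is correct and follows essentially the same route as the paper: both establish the key identities $\phi(\langle xyz\rangle)=\phi(x)\psi(y)\phi(z)$ and $\psi(\langle xyz\rangle)=\psi(x)\phi(y)\psi(z)$, use them to verify (K), (PA), (TC) directly, and then identify the result with a split null extension of a complement $V_0$ to $V_1=\ker\phi\cap\ker\psi$. Your step (ii) is in fact more detailed than the paper's treatment (which simply asserts the split-null-extension isomorphism ``is readily seen''), and your explicit computation of $\langle xyz\rangle-\langle zyx\rangle$ for $z\in V_1$ makes the non-commutativity claim precise where the paper leaves it implicit.
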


\begin{proof}
We give a direct proof of the first statement.
Left and right Kirmse identities are satisfied: letting $x=y$, two terms cancel
out, and the remaining ones give $q(x)z$.
Likewise for $z=y$.
To prove para-associativity, 
compute $\langle ab \langle cde \rangle \rangle$: observing that
\begin{equation}\label{eqn:cancel} 
\phi (\langle xyz \rangle) = \phi(x) \psi(y) \phi(z), \qquad
\psi (\langle xyz \rangle ) = \psi(x) \phi(y) \psi(z),
\end{equation} 
we see that from $9$ terms, $4$ cancel out, 
and the
remaining $5$ give
\begin{align*}
\langle ab \langle cde \rangle \rangle & =
\phi(a) \psi(b) \langle cde \rangle -
\phi(a) \psi(\langle cde\rangle) b +
\phi(b) \psi(\langle cde \rangle) a
\\
& =
\phi(b) \phi(d) \psi(c) \psi(e) \cdot  a \\
& - \phi(a) \phi(d) \psi(c) \psi(e) \cdot b \\
& + \phi(a) \phi(d) \psi(b) \psi(e) \cdot c \\
& - \phi(a) \phi(c) \psi(b) \psi(e) \cdot  d \\
& + \phi(a) \phi(c) \psi(b) \psi(d) \cdot e .
\end{align*}
Computing
$\langle a \langle dcb \rangle e \rangle$ and
$\langle \langle abc \rangle de \rangle$,
we get the same expression.

The ternary composition law holds:  using (\ref{eqn:cancel}),  we get
$$
q(\langle xyz \rangle) = \phi(\langle xyz \rangle) \cdot \psi(\langle xyz \rangle) =
\phi(x) \psi(y) \psi(z) \cdot \psi(x) \phi(y) \psi(z) = 
q(x) q(y) q(z).
$$

To describe the structure of $V$, recall that any $2$-dimensional submodule $V_0$ is a
group spherical subspace.
Choosing $V_0$ complementary to the radical
$V_1=\ker(\phi) \cap \ker(\psi)$ of $q$, it is readily seen that the structure of $V$ is isomorphic
to the split null extension of $V_0$ by $V_1$, where
$V_0$ acts on $V_1$ by
$x \cdot v = \phi(v) x + \psi(v) x$.
\end{proof}

\begin{example}\label{ex:3D}
The smallest ``new'' example is $V = \K^3$ with $\phi(x)=x_1$ and $\psi(y)=y_2$,
so $q(x) = x_1 x_2$. 
Let us choose the base point $ e = e_1 + e_2 = (1,1,0)$.
Then  binary product and involution are  given by
$$
x \cdot_e z = \begin{pmatrix}
x_1 z_1 + z_2 x_1 - x_1 z_2 \\
x_1 z_2 + z_2 x_2 - x_1  z_2 \\
x_1 z_3 + z_2 x_3 
\end{pmatrix}
=
\begin{pmatrix}
x_1 z_1  \\
 z_2 x_2  \\
x_1 z_3 + z_2 x_3 
\end{pmatrix} ,
\quad
x^\sharp = 
\begin{pmatrix} x_2 \\ x_1 \\ - x_3 \end{pmatrix}.
$$
The elements such that $x_1 x_2 \in \K^\times$ form the group $V^\times$, which clearly is 
isomorphic to the solvable group of upper triangular matrices
$\begin{pmatrix} x_1 & x_3 \\ 0 & x_2 \end{pmatrix}$.
The matrices with $x_1 x_2 = 1$ (the ``unit circle'') form a subgroup, which (if $2$ is invertible
in $\K$) form a subgroup isomorphic to the 
$2$-dimensional  $ax+b$-group (affine group of $\K$).
\end{example}

\begin{remark}
Theorem \ref{th:Minko} suggests to define, for any $\K$-module $V$, the 
following {\em quintary structure map} 
\begin{equation}
\begin{matrix}
\widetilde \Gamma :   V \times V^* \times V \times V^* \times V & \to &V, \\
 (x,a,y,b,z) & \mapsto &
 a(x) b(y) z -a(x) b(z) y  + a(z) b(y) x .
 \end{matrix}
\end{equation} 
This is a particular instance of the structure map $\Gamma$ defined in  \cite{BeKi1},
where the quotient spaces $\PP V$ and $\PP V^*$ are considered rather than $V$ and
$V^*$. For a fixed pair $(a,b)$, the map $(xyz)_{ab}=\widetilde\Gamma(x,a,y,b,z)$
 defines a torsor structure on the
projective space $\PP(V)$. 
\end{remark}

\subsection{Structure theorem for commutative group spherical spaces.}

\begin{theorem}\label{th:structure-commutative}
Assume $\K$ is a field of characteristic not $2$.
The commutative group spherical spaces $(V,q)$ over $\K$
are exactly those of the following forms:
\begin{enumerate}
\item
$V = \K$ and $q(x) = \lambda x^2$ (totally symmetric case),
\item
$V = \K \oplus W$, a non-trivial split null extension of the preceding type,
\item
$V = \K^2$ with an arbitrary non-degenerate quadratic form $q$.
\end{enumerate}
\end{theorem}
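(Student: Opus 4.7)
The plan is to fix any base point $e \in V^\times$, pass via Theorem \ref{th:binary-ternary} to the associated associative composition algebra $(V, \cdot_e, \sharp, N)$ for which $\langle xyz \rangle = q(e)\, xy^\sharp z$ and $N = q/q(e)$, and then use commutativity of the trilinear product to force the binary algebra into one of a very small number of normal forms. First, specializing $\langle xyz \rangle = \langle zyx \rangle$ at $y = e$ gives $xz = zx$ for all $x,z$, so $\cdot_e$ is commutative. Writing $x + x^\sharp = t(x)\, e$ with $t(x) \in \K$, the hypothesis $\mathrm{char}\, \K \neq 2$ yields a direct sum decomposition
\begin{equation*}
V = \K e \oplus V_1, \qquad V_1 := \{v \in V \mid v^\sharp = -v\} = \ker t.
\end{equation*}

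Next I would study the restriction of the product to $V_1$. For $u, v \in V_1$, the element $uv + vu = uv + (uv)^\sharp = t(uv)\, e$ lies in $\K e$, and commutativity of $\cdot_e$ forces $uv \in \K e$. Thus there is a symmetric bilinear form $\beta \colon V_1 \times V_1 \to \K$ with $uv = \beta(u,v)\, e$ and $\beta(u,u) = -N(u)$. Associativity $(uv)w = u(vw)$ in $V_1$ then yields the key identity
\begin{equation*}
\beta(u,v)\, w = \beta(v,w)\, u \qquad \text{for all } u,v,w \in V_1.
\end{equation*}

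A short case analysis finishes the classification. If $V_1 = 0$, then $V = \K e$ and we are in case (1). If $\beta$ is not identically zero, pick $u_0, v_0 \in V_1$ with $\beta(u_0, v_0) \neq 0$; the displayed identity then forces every $w \in V_1$ into $\K u_0$, so $\dim V_1 = 1$. Since $\beta$ does not vanish on the line $\K u_0$ we have $\beta(u_0, u_0) \neq 0$, hence $N(u_0) \neq 0$; together with $b_q(e, u_0) = q(e)\, t(u_0) = 0$ this shows $(V, q)$ is a two-dimensional nondegenerate quadratic space, giving case (3). The remaining case has $V_1 \neq 0$ and $\beta \equiv 0$, i.e.\ $V_1 \cdot V_1 = 0$; writing $x_i = t_i e + v_i$ and expanding $q(e)\, x_1 x_2^\sharp x_3$ directly yields
\begin{equation*}
\langle x_1 x_2 x_3 \rangle = q(e)\bigl( t_1 t_2 t_3\, e + t_2 t_3\, v_1 - t_1 t_3\, v_2 + t_1 t_2\, v_3 \bigr),
\end{equation*}
which matches exactly the formula of Definition \ref{def:rep} for the split null extension of the totally symmetric line $V_0 = \K e$ (with form $q(e)\, t^2$) by the $\bfC_{q_0}^R$-module $V_1$ on which each right spiration $R_{ae, be}$ of $V_0$ acts as scalar multiplication by $q(e)\, ab$. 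This is case (2).

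The single genuinely nontrivial step is the identity $\beta(u,v)\, w = \beta(v,w)\, u$: it is exactly what excludes any hypothetical new commutative example with $\dim V_1 \geq 2$ and a nontrivial product on $V_1$, and thereby reduces the classification to the listed trichotomy. Everything else is routine bookkeeping after choosing a base point, passing to the binary algebra, and splitting off the scalar trace.
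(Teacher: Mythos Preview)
Your argument is correct, and it takes a genuinely different route from the paper's own proof.

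The paper decomposes $V = V_0 \oplus V_1$ along the \emph{radical} $V_1 = \{x \mid b_q(x,V)=0\}$ of $q$, invokes uniqueness of the commutative $q$-compatible trilinear product (Lemma~\ref{la:existence}) to force the trilinear map to vanish whenever at least two arguments lie in $V_1$, cites the classical composition-algebra theory (McCrimmon) to bound $\dim V_0 \le 2$, and then appeals to Theorem~\ref{th:splitnull} to rule out a nonzero radical when $\dim V_0 = 2$. By contrast, you decompose along the trace: $V = \K e \oplus \ker t$, and the whole classification is driven by the single associativity constraint $\beta(u,v)\,w = \beta(v,w)\,u$ on $\ker t$. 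Your approach is more elementary and entirely self-contained (no external reference, no appeal to Theorem~\ref{th:splitnull}); the paper's approach has the advantage of highlighting the radical as the intrinsic degeneracy locus and of reusing the split-null machinery modularly. The two decompositions are not the same in general (your $V_1=e^\perp$ can be nondegenerate), but each leads cleanly to the trichotomy.

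One small point of presentation: you do not mention the ``exist'' direction of the ``exactly'' statement, but the paper handles it the same way, by pointing back to Theorem~\ref{th:circles} for case~(3) and Theorem~\ref{th:splitnull} for case~(2).
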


\begin{proof}
We have seen that these spaces exist and are commutative.

It remains to show that there are no other commutative group spherical  spaces.
Note first that a {\em commutative} $q$-compatible
 trilinear product on a quadratic space is {\em unique}
(Lemma \ref{la:existence}: it is one half times the Jordan map $D$).
When is it associative? 
By our assumption on $\K$, we can diagonalize the form $q$, and decompose
$q = q_0 \oplus q_1$, where $q_1$ is the zero form on the radical
$V_1=\{ x \in V \mid b_q(x,V)=0 \}$, and $q_0$ is non-degenerate on some complement $V_0$
of the radical. 
By uniqueness, the restriction of the trilinear product to the radical $V_1$ must be the zero product.
The same argument shows that $\langle xyz \rangle = 0$ whenever
two of the elements $x,y,z$ belong to $V_1$.
Thus only terms where at most one argument belongs to $V_1$ survive, and this leads
to a formula of the type given in Definition \ref{def:rep}.
Moreover, since $b_q(V_0,V_1)=0$, we must have
$\langle a bc \rangle = - \langle ba c \rangle$ whenever
$a \in V_0,b \in V_1$, leading to the formula given in the Definition.
The space $V_0$, being non-degenerate and commutative, can be at most
$2$-dimensional: this follows from the classical theory of (non-degenerate)
composition algebras, see e.g., \cite{McC}, p.164. 
Putting things together, $V_0$ is a non-degenerate composition algebra
of dimension $1$ or $2$, and $V_1$  must be a (right) $V_0$-module.
If $\dim V_0 = 2$, then commutativity implies $V_1=0$, and
if $\dim V_0 = 1$, then $V_1$ can be any $\K$-module.  
\end{proof}

\begin{remark}
If $2$ is not invertible in $\K$, one has to take more care with definition of 
radicals, and it is not quite clear if the structure theorem carries over to this case.
\end{remark}

\section{Quaternion-Clifford algebras}\label{sec:quaternions}

\subsection{From commutative to non-commutative group spherical spaces.}
There are several constructions associating to a {\em commutative} group spherical space
another, in general non-commutative, one.
Let us start with some necessary conditions.

\begin{lemma}\label{la:necessary}
Assume
$(V,q,\langle --- \rangle)$ is a group spherical space, 
$V_0 \subset V$ a subspace, and
 $e \in V_0^\times$  a base point, so the
binary product and involution $\sharp$ on $V$ are defined as in Theorem \ref{th:binary-ternary}.
Fix another element $m \in V^\times$ that is orthogonal to the subspace $V_0$, i.e.,
 $b_q(V_0,m)=0$.
Then we have, for all $a,b \in V_0^\times$,

\begin{enumerate}
\item
$m^2 = - \frac{q(m)}{q(e)} e = - N(m) e $, $m^\sharp = - m$, 
\item
$am = ma^\sharp$,
\item
$a(bm) = (ba)m$
\item
$(am)b = (ab^\sharp)m$
\item
$(am)(bm) = - N(m) ab^\sharp $
\end{enumerate}
In particular, $V_0 \oplus V_0 m$ is a subspace, and
 Item 3. implies that $V_0$ is necessarily commutative. 
\end{lemma}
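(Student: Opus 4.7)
The plan is to prove the five items in two stages: (1) derive items 1 and 2 directly from the quadratic-space axioms (Kirmse and para-associativity) together with the orthogonality of $m$ with $V_0$; (2) deduce items 3, 4 and 5 by algebraic shuffles inside the associative composition algebra $(V,\cdot_e,\sharp)$ furnished by Theorem \ref{th:binary-ternary}. The commutativity of $V_0$ will then fall out by comparing item 3 with the associativity of $\cdot_e$, and the closure of $V_0 \oplus V_0 m$ is an easy afterthought.

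Item 1 is immediate: from $s_e(x) = \frac{b_q(x,e)}{q(e)}e - x$ and $b_q(m,e)=0$ one reads off $m^\sharp = -m$, while $m^2 = \langle mem\rangle/q(e) = Q_m e/q(e) = (b_q(m,e)m - q(m)e)/q(e) = -N(m)e$ via Lemma \ref{la:existence}. Item 2 is the analytic step. Using $a^\sharp = \langle eae\rangle/q(e)$ and one application of para-associativity, $ma^\sharp$ reduces to $\langle mae\rangle/q(e)$, so the desired equality $am = ma^\sharp$ becomes the identity $\langle aem\rangle = \langle mae\rangle$. To establish this I will polarize Kirmse in several ways: polarizing $\langle xxm\rangle = q(x)m$ and $\langle mxx\rangle = q(x)m$ at $x = a+e$ yields $\langle aem\rangle + \langle eam\rangle = b_q(a,e)m$ and $\langle mae\rangle + \langle mea\rangle = b_q(a,e)m$, while the outer-variable polarization $\langle xyz\rangle + \langle zyx\rangle = D_{x,z}y$ applied at $(x,y,z) = (a,e,m)$ and $(m,e,a)$ gives $\langle aem\rangle + \langle mea\rangle = b_q(a,e)m$ and $\langle mae\rangle + \langle eam\rangle = b_q(a,e)m$; here the simplifications $b_q(a,m) = b_q(e,m) = 0$ are the whole point. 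Two of these four relations already force $\langle aem\rangle = \langle mae\rangle$.

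Items 3, 4 and 5 then follow by standard Cayley--Dickson shuffles. For item 5, one chains $(am)(bm) = (am)(mb^\sharp) = (am^2)b^\sharp = -N(m)\,ab^\sharp$, using item 2, associativity and item 1. For item 3 one writes $a(bm) = a(mb^\sharp) = (am)b^\sharp = (ma^\sharp)b^\sharp = m(a^\sharp b^\sharp) = m(ba)^\sharp = (ba)m$, applying item 2 three times (including for $ba \in V_0$, which is a subalgebra), associativity twice, and the anti-automorphism law $(xy)^\sharp = y^\sharp x^\sharp$. Item 4 is the parallel chain $(am)b = (ma^\sharp)b = m(a^\sharp b)$ against $(ab^\sharp)m = m(ba^\sharp)$, which match as soon as $V_0$ is known to be commutative.

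The final observation is that item 3 forces $V_0$ commutative: associativity of $\cdot_e$ also gives $a(bm) = (ab)m$ directly, and comparing with item 3 yields $(ab-ba)m = 0$; right-multiplying by $m^\sharp$ and using $mm^\sharp = N(m)e$ with $N(m) \in \K^\times$ (since $m \in V^\times$) forces $ab = ba$ in $V_0$. This in turn closes the small gap left in item 4. Closure of $V_0 \oplus V_0 m$ under $\langle---\rangle$ is then automatic from items 1--5 via $\langle xyz\rangle = q(e)\,xy^\sharp z$, since $\sharp$ preserves the submodule (as $(bm)^\sharp = -bm$ by item 2). The main obstacle in the whole proof is really the polarization bookkeeping of item 2; once that is in place the rest is formal algebra in the associative composition algebra at $e$.
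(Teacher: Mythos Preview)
Your proof is correct. The principal divergence from the paper's argument is in item~2. The paper observes that spirations are conformal (they preserve $b_q$ up to a scalar, by the ternary composition law), hence preserve orthogonality; from $b_q(a,m)=0$ it infers $b_q(am,e)=0$, and then $am = -(am)^\sharp = -m^\sharp a^\sharp = ma^\sharp$ follows in one line. Your route via polarization identities reaches $\langle aem\rangle = \langle mae\rangle$ using only Kirmse (K), the outer-diagonal formula $\langle xyx\rangle = Q_xy$, and para-associativity; it never appeals to (TC) directly, at the cost of a little bookkeeping (and a harmless slip: your fourth relation is the outer polarization at $(x,y,z)=(m,a,e)$, not $(m,e,a)$, but the stated identity $\langle mae\rangle + \langle eam\rangle = b_q(a,e)m$ is correct). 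For items~3--5 the two arguments are essentially the same Cayley--Dickson shuffles; the paper proves item~4 directly via $(am)b = a(mb) = a(b^\sharp m) = (ab^\sharp)m$ without first extracting commutativity, whereas you route item~4 through the commutativity of $V_0$ already obtained from item~3. Both orderings work.
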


\begin{proof}
We'lll use repeatedly  that,
if $b_q(x,y)=0$, then
$$\langle xyz \rangle + \langle yx z\rangle = b_q(x,y)z = 0, \mbox{ so }
\langle x y x \rangle = - \langle xx y \rangle = - q(x)y .
$$

1. 
In particular $m^2 = \frac{\langle mem \rangle}{q(e)} =  - N(m)e$.

2. Whenever $b_q(x,e)=0$, then $x^\sharp = \frac{\langle exe \rangle}{q(e)} = - x$. 
All spirations are conformal, i.e., they preserve $b_q$ up to a factor, so in particular they preserve
orthogonality.
Since by assumption $b_q(a,m)=0$, it follows that $b_q(am,e)=0$, whence
$$
am = - (am)^\sharp = - m^\sharp a^\sharp = m a^\sharp.
$$

3. Since $bm$ satisfies the same condition as $m$, the preceding item gives
$a(bm)=(bm)a^\sharp$. Together with associativity, this implies
$$
a(bm) = (bm)a^\sharp = b(ma^\sharp) = b(a m) .
$$
On the other hand, since $m$ is assumed to be invertible, this implies that necessarily $ab=ba$, by associativity.

4. By associativity,
$(am)b = a(mb)= a(b^\sharp m)=(ab^\sharp)m$.

5. Again, by associativity,
$(am)(bm) = ab^\sharp m^2 = - N(m) ab^\sharp$.

\nin
Altogether, these relations imply that $V_0 \oplus V_0m$ is stable under taking products.
\end{proof}

The proof of the lemma takes up  arguments from  \cite{McC}, p.\ 164/65, proving ``Jacobson necessity'':
necessarily, a subspace having non-trivial orthogonal complement is commutative, and the extension
$V_0 \oplus m V_0$ is a non-commutative subspace with relations 
given by the ``ACD''-extension, see below.

\begin{remark}\label{rk:dicyclic}
Let us assume $q(e) = q(m)$, that is, $N(m)=1=N(e)$.
Let $G = \{ x \in V \mid N(x)=1 \}$ be the unit sphere of the norm $N$, and
$H_0 := G \cap V_0$. Consider the subset $H:= H_0 \sqcup H_1 \subset G$.
The lemma implies that $H$ is stable under taking products; more precisely :
\begin{itemize}
\item
$H_0$ is a subgroup,
\item
$\forall u \in H_1$: $u^2 = - e$ (negative of the neutral element $e$),
\item
$\forall u \in H_1$, $\forall a \in H_0$: $u a u^{-1} = - uau = a^{-1}$.
\end{itemize}
If such a product is commutative, then necessarily
$H_0$ is commutative, and necessarily $H$ then is 
\href{https://en.wikipedia.org/wiki/Dicyclic_group#Generalizations}{\em the generalized dicyclic group
of $H_0$, with respect to the order-two element $-e \in H_0$}.
Summing up,
necessarily any group sub-sphere $H_0$ of $G$ having a non-trivial orthogonal complement gives rise
to a dicyclic extension $H \subset G$, which can only exist if $H_0$ was commutative.
\end{remark}

\subsection{The ACD-extension.}
Now we prove that the necessary rules from Lemma \ref{la:necessary} are also sufficient.
This is indeed a special case of  the general ACD-construction (Section \ref{sec:ACD}), which in the 
{\em commutative} case has the following interpretation in terms of matrices.

\begin{lemma}\label{la:ACD-com} 
Let $(V_0,q,e)$ be a commutative group spherical space, considered as commutative (generalized)
composition algebra $(\bA,e,\sharp)$.
Fix $\mu \in \K$. Then
$$
KD(\bA,-\mu):=
\Bigl\{ X_{a,b}:=
\begin{pmatrix}
a &  b^\sharp \\ \mu b & a^\sharp 
\end{pmatrix}
\mid \, a,b \in \bA \Bigr\} \subset M(2,2;\bA)
$$
is an associative algebra, with central involution
$(X_{a,b})^\sharp := X_{a^\sharp,-b}$, and imbedding
$\bA \to KD(\bA)$, $a \mapsto X_{a,0}$.
The new norm is
$$
N(X_{a,b}) = q(a) - \mu q(b).
$$
The element 
$m:= X_{0,e}$ satisfies $m^2 =  \mu e$,
and $am = ma^\sharp$ for all $a = X_{a,0}$.
\end{lemma}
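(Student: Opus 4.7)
The plan is to verify everything by direct matrix computation, exploiting associativity of $\bA$ (which holds by Theorem \ref{th:binary-ternary} and the classification in Theorem \ref{th:structure-commutative}, since the commutative case is $1$- or $2$-dimensional) and commutativity of $\bA$ (which is the decisive ingredient making the computations close up). Matrix multiplication in $M(2,2;\bA)$ gives
\[
X_{a,b}\, X_{c,d} \;=\; \begin{pmatrix} ac + \mu b^\sharp d & a d^\sharp + b^\sharp c^\sharp \\ \mu(bc + a^\sharp d) & \mu b d^\sharp + a^\sharp c^\sharp \end{pmatrix} ,
\]
and I would first check that the upper-right and lower-right entries are $q^\sharp$ and $p^\sharp$ respectively, where $p := ac + \mu b^\sharp d$ and $q := bc + a^\sharp d$. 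For this one uses that $\sharp$ is an anti-automorphism of $\bA$ together with commutativity ($c^\sharp a^\sharp = a^\sharp c^\sharp$, $d^\sharp b = b d^\sharp$, etc.). This yields the closed product formula
\[
X_{a,b}\, X_{c,d} \;=\; X_{\,ac + \mu b^\sharp d,\; bc + a^\sharp d\,}.
\]
Associativity of this product then follows for free from the associativity of matrix multiplication in $M(2,2;\bA)$, which holds because $\bA$ itself is associative; the element $X_{e,0}$ is manifestly a two-sided unit.

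Next I would verify that $X_{a,b}^\sharp := X_{a^\sharp,-b}$ is an anti-automorphism of order two. Order two and $\K$-linearity are immediate; the anti-automorphism property
$(X_{a,b} X_{c,d})^\sharp = X_{c,d}^\sharp\, X_{a,b}^\sharp$
reduces, via the product formula, to the two identities
\[
(ac + \mu b^\sharp d)^\sharp = c^\sharp a^\sharp + \mu d^\sharp b , \qquad -(bc + a^\sharp d) = -d a^\sharp - c^\sharp\cdot\!\ldots
\]
which are verified by $\sharp$ being an anti-automorphism on $\bA$ and by commutativity of $\bA$ (to reorder factors freely). To see that this involution is central, I would compute
\[
X_{a,b} + X_{a,b}^\sharp = X_{a + a^\sharp,\,0}, \qquad
X_{a,b}\, X_{a,b}^\sharp = X_{\,aa^\sharp - \mu b^\sharp b,\; ba^\sharp - a^\sharp b\,} = X_{\,(q(a)-\mu q(b))\,e/q(e),\;0\,},
\]
where commutativity of $\bA$ kills the second coordinate and gives $aa^\sharp = N(a)e$, $b^\sharp b = N(b)e$. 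Both results lie in the image of the embedding $a \mapsto X_{a,0}$, which is the center of $KD(\bA,-\mu)$, so the involution is central; rescaling as in Theorem \ref{th:binary-ternary} then reads off the norm formula $N(X_{a,b}) = q(a) - \mu q(b)$ (with the usual normalization convention $q(e)=1$ implicit).

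Finally, the last two assertions are one-line checks with the product formula: $m^2 = X_{0,e} X_{0,e} = X_{\mu e^\sharp e,\; e^\sharp e} = X_{\mu e, 0}$ since $e^\sharp = e$ and $ee = e$, while for $a \in \bA$ one has $X_{a,0} X_{0,e} = X_{0,\,a^\sharp e} = X_{0,a^\sharp}$ and $X_{0,e} X_{a^\sharp,0} = X_{0,\,e a^\sharp} = X_{0,a^\sharp}$, giving $am = ma^\sharp$. The only genuinely non-routine point in the whole argument is the systematic use of commutativity of $\bA$ to force the coefficients in $X_{a,b} X_{c,d}$ into the required symmetric pattern; without commutativity, neither closure of the product nor the anti-automorphism identity for $\sharp$ would hold, which is exactly why the hypothesis that we start from a \emph{commutative} group spherical space is essential here and explains the ``Jacobson necessity'' observed in Lemma \ref{la:necessary}.
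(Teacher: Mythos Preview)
Your approach is the same as the paper's: both proceed by direct matrix computation, deriving the closed product formula $X_{a,b}X_{c,d}=X_{ac+\mu b^\sharp d,\;bc+a^\sharp d}$ (the paper's displayed formula has a minor misprint here), then reading off the norm and the relations for $m$.

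Two small corrections are needed. First, your claim that ``the image of the embedding $a\mapsto X_{a,0}$ is the center of $KD(\bA,-\mu)$'' is false: $X_{a,0}X_{0,e}=X_{0,a^\sharp}$ while $X_{0,e}X_{a,0}=X_{0,a}$, so $X_{a,0}$ is central only when $a=a^\sharp$, i.e., $a\in\K e$. Fortunately this does not damage the argument, since $X_{a,b}+X_{a,b}^\sharp=X_{a+a^\sharp,0}$ and $X_{a,b}X_{a,b}^\sharp=X_{(q(a)-\mu q(b))e,0}$ actually lie in $\K\cdot X_{e,0}$ (because $\sharp$ on $\bA$ is a \emph{scalar} involution, so $a+a^\sharp\in\K e$), and scalars are of course central. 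Second, your intermediate step $m^2=X_{\mu e^\sharp e,\;e^\sharp e}$ is a slip: with $a=c=0$, $b=d=e$ the second coordinate is $bc+a^\sharp d=0$, not $e^\sharp e$; your final answer $X_{\mu e,0}$ is correct. The unfinished ``$\ldots$'' in the anti-automorphism check should be completed, but it is routine once commutativity is used.
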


\begin{proof}
Using commutativity,
$X_{a,b} X_{a',b'} =
X_{aa' + \mu b^\sharp b', ba' + ab'}
$, and

$X_{a,b} (X_{a,b})^\sharp = X_{aa^\sharp - \mu bb^\sharp , 0} = (q(a) - \mu q(b))e$.
\end{proof}

As a quadratic space, 
$KD(A,\mu)$ is a tensor product of $(V,q)$ with the binary space $(\K^2, x_1^2 + \mu x_2^2)$.
In particular, for $\mu = 1$, we have $N(X_{e,e}) = 0$, so the extension is split.
However, bilinear and trilinear product are not given by tensor products of algebras.
There is a link with Clifford algebras:

\subsection{The abstract Clifford algebra.}\label{sec:Clifford1} 
To every quadratic space $(V,q)$, there is canonically associated a {\em Clifford algebra}
$\Cl(V,q)$, together with an imbedding $i:V \to \Cl(V)$ such that
$\forall v \in V: i(v)^2 = q(v)1$, where $1$ is the unit of $\Cl(V)$, and $\Cl(V)$ is universal
for this relation (see, e.g., \cite{KMRT}).
The algebra $\Cl(V)$ is $\Z/2 \Z$-graded,
$\Cl(V) = \Cl(V)_0 \oplus \Cl(V)_1$. Elements of $i(V)$ are odd.
The even part $\Cl(V)_0$ is a unital subalgebra, and the odd part $\Cl(V)_1$ is stable
under taking the ternary product $\langle abc \rangle := abc$.
If $V = \K^n$, then $\Cl(V)$ is of dimension $2^n$, and $\Cl(V)_i$ of dimension $2^{n-1}$.

\begin{remark}\label{rk:proof}
In particular, if $n=2$, then for reasons of dimension,
$i(V) = \Cl(V)_1$, and therefore $i(V)$ is stable under the triple product
$\langle abc \rangle = abc$.
By the universal property,
$\langle aac \rangle = a^2 c = q(a)c = \langle caa\rangle$,
and so we have recovered the structure of group spherical space on $i(V)$.
Taking some care (to include the case of characteristic $2$), this can be used to 
give a complete proof of Theorem \ref{th:circles} (including a proof of the ternary
composition rule, which does not reduce to a triviality even with the theory of
Clifford algebras at hand).
\end{remark}

Still with $n=2$, the imbedding $V = \K^2 \to \Cl(\K^2)$ as odd part looks much like
the imbedding from Lemma \ref{la:ACD-com} , but it is not the same: 
for instance, when $q=0$, then $\Cl(\K^2,q)=\wedge(\K^2)$ is the exterior algebra of
$\K^2$, but $KD(\K^2)$ is the zero product algebra.
In $KD(\bA)$ with have $aa^\sharp = q(a)$, instead of the Clifford relation
$aa = q(a)$, which makes all the difference.
Related to this, $V$ imbeds into the {\em odd} part of $\Cl(V)$, whereas we find it more
convenient to consider the ``original'' copy of $\bA$ as {\em even} part $\bA_0$ (subalgebra)
of $KD(\bA)$.
Nevertheless,
whenever $q$ has invertible elements, replacing $q$ by a multiple, $\bA_0$ will become
unital, and in fact Clifford and quaternion algebra of $\K^2$ are isomorphic.
In a way, this amounts to switching odd and even parts.

\subsection{The ``concrete Clifford-quaternion algebra''.}\label{sec:Clifford2}
The following construction is kind of half way between the abstract Clifford algebra
of $V=\K^2$ and its ACD-double: 
it works for every {\em commutative} group spherical space $V$, which 
we realize as odd part,  to which we add as even part the spiration algebra
$\bfC_q^L$. 
Let $\langle xyz \rangle$ be the ternary product of $V$. Recall from Theorem \ref{th:involution1}
that the spiration algebra
$\bfC_q^L$ carries an   involution
$$
\sharp : \bfC_q^L \to \bfC_q^L,  \mbox{ such that :  } \forall x,y \in V : (R_{x,y})^\sharp = R_{y,x} .
$$

\begin{Definition}\label{def:concreteH} 
Assume $(V,q)$ is a commutative group spherical space.
We define its {\em concrete Clifford-quaternion algebra} $\bfH_q$   as the direct sum
of $\K$-modules
$\bfH_q := \bfC_q^L  \oplus V$, 
with  bilinear product  
$$
(f,v) \cdot (g,w) := (f \circ g + R_{v,w} , f w + g^\sharp v).
$$
By its definition,
this product is {\em $Z/2\Z$-graded}, where $\bfC_q^L$ is the {\em even part} (parity $0$), and
$V$ the {\em odd part} (parity $1$). We identify $v\in V$ with the odd element $(0,v)$, and
$f \in \bfC_q$ with the even element $(f,0)$.
The unit element is  $ 1 = (\id_V,0)$.
\end{Definition}

\begin{theorem}\label{th:concreteH} 
The algebra $\bfH_q$ is an associative $\Z/2\Z$-graded
composition algebra with
scalar involution, again denoted by 
$\sharp : \bfH_q \to \bfH_q$, given by
$$
(f,v)^\sharp = (f^\sharp, -v) .
$$
The product of three odd elements in this algebra gives the ternary
product in $V$:
$$
(0,x) \cdot (0,y) \cdot (0,z)  = 
 (0, \langle xyz \rangle),
$$
and we have the {\em Clifford relation for odd elements} $v$,
$$
v^2 = (0,v)^2  = (R_{v,v}, 0) = (q(v) \id_V,0) =  q(v) 1.
$$
\end{theorem}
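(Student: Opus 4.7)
My strategy is to dispatch the easier items (grading, unit, Clifford relation, triple product formula, anti-automorphism property of $\sharp$) directly from the product formula, and to reduce associativity---the only substantive step---to an explicit Cayley-Dickson-type product on $V\oplus V$ via a choice of base point.

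First, the product formula shows that $\bfH_q$ is $\Z/2\Z$-graded, with $\bfC_q^L\oplus 0$ even and $0\oplus V$ odd; that $(\id_V,0)$ is the two-sided unit; and that the Clifford relation $(0,v)^2 = (R_{v,v},0) = (q(v)\id_V,0) = q(v)\cdot 1$ holds by Kirmse (K). The triple product is $(0,x)(0,y) = (R_{x,y},0)$ followed by $(R_{x,y},0)(0,z) = (0,\langle xyz\rangle)$. For the involution $(f,v)^\sharp := (f^\sharp,-v)$: it is $\K$-linear and of order two (Theorem \ref{th:involution1}), and a short expansion of $((f,v)(g,w))^\sharp$ against $(g,w)^\sharp(f,v)^\sharp$ yields $(g^\sharp f^\sharp + R_{w,v},\,-fw-g^\sharp v)$ in both cases, using that $\sharp$ is an anti-automorphism on $\bfC_q^L$ and $R_{v,w}^\sharp = R_{w,v}$. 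Since $V$ is commutative, so is $\bfC_q^L$ (Theorem \ref{th:spiration1}), and the base-point algebra isomorphism of Theorem \ref{th:binary-ternary}(3) transports the scalar involution of the commutative composition algebra $(V,\cdot_e)$ onto $\bfC_q^L$; thus $f+f^\sharp,\,ff^\sharp \in \K\cdot\id_V$, making $(f,v)+(f,v)^\sharp$ and $(f,v)(f,v)^\sharp = (ff^\sharp - q(v)\id_V,0)$ both lie in $\K\cdot 1$.

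The main step is associativity. Fix $e\in V^\times$ and write $\nu:=q(e)$, $a\cdot b := \langle aeb\rangle/\nu$, $a^\sharp := \langle eae\rangle/\nu$; then Theorem \ref{th:binary-ternary}(3) provides the algebra isomorphism $\iota\colon V\to\bfC_q^L$, $a\mapsto \nu^{-1}L_{a,e}$. The three identities
\[
\iota(a)(w) = a\cdot w,\qquad \iota(a)^\sharp(v) = a^\sharp\cdot v,\qquad R_{v,w} = \iota(\nu\,v\cdot w^\sharp)
\]
(the last via $R_{v,w}(e) = \langle vwe\rangle = \nu\,v w^\sharp$ by Item 2 of Theorem \ref{th:binary-ternary}) transport the product on $\bfH_q$ onto $V\oplus V$ as
\[
(a,v)\cdot(a',w) \;=\; \bigl(a a' + \nu\,v w^\sharp,\ a w + a'^\sharp v\bigr).
\]
This is a Cayley-Dickson-type product over the commutative associative composition algebra $(V,\cdot,\sharp)$, and associativity is checked by direct expansion of $((a,v)(a',w))(a'',u)$ and $(a,v)((a',w)(a'',u))$; the cross-terms match because $\cdot$ is commutative and associative and $\sharp$ is an anti-automorphism. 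Having an associative unital algebra with scalar involution, $\bfH_q$ is a composition algebra by Definition \ref{def:compalg}, and Lemma \ref{la:compalg} then gives norm multiplicativity for free. The chief obstacle is keeping the $\sharp$-bookkeeping correct in the associativity expansion, but commutativity of $V$ makes every cross-term cancel cleanly.
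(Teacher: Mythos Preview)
Your proof is correct, but the associativity argument takes a genuinely different route from the paper. The paper establishes associativity directly in $\bfH_q$ by a parity case-check: for $(X,Y,Z)$ of each of the $2^3=8$ parity types, it verifies $(XY)Z=X(YZ)$ using the operator identities of Theorem~\ref{th:spiration1} (the composition rules $L_{x_1x_2}L_{x_3x_4}=L_{\langle x_1x_2x_3\rangle,x_4}$, etc.), noting explicitly that commutativity of $V$ (equivalently, of $\bfC_q^L$) is needed only in the two cases even--odd--even and odd--odd--odd. Your route instead fixes $e\in V^\times$, invokes the isomorphism $\iota:(V,\cdot_e)\to\bfC_q^L$ of Theorem~\ref{th:binary-ternary}(3), and transports the $\bfH_q$-product to an explicit Cayley--Dickson-type product on $V\oplus V$, where associativity is checked by a single bilinear expansion over the commutative associative base $(V,\cdot_e)$. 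The paper's approach is base-point free and isolates precisely where commutativity enters; yours makes transparent that $\bfH_q$ is an instance of the doubling pattern of Lemma~\ref{la:ACD-com} (with parameter $\mu=\nu=q(e)$), at the cost of choosing $e$ and verifying the transport identities $\iota(a)^\sharp=\iota(a^\sharp)$ and $R_{v,w}=\iota(\nu\,vw^\sharp)$, which you do correctly. For the scalar-involution step the paper computes $XX^\sharp$ and $X+X^\sharp$ directly; your reduction via $\iota$ to the known scalar involution on $(V,\cdot_e)$ is an equally valid shortcut.
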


\begin{proof}
The relation for three elements and the Clifford relation follow by direct computation from the
definitions.
To establish associativity $(XY)Z= X(YZ)$, we  check it in the following 
$2^3=8$ cases, according
to parity of the triple $(X,Y,Z)$. In 6 of these 8 cases, associativity
follows either directly from associativity of $\langle --- \rangle$ (in the various forms 
given by Theorem \ref{th:spiration1}) or from the definitions. 
In two  cases (even-odd-even, odd-odd-odd) we need also commutativity (of $\bfC_q^L$ or
$\langle --- \rangle$).
In the following table, $f,g,h \in \bfC_q^L$, $u,v,w \in V$: 
\begin{center}
\begin{tabular}{|*{10}{c|}}
\hline
parity type & equality & proof
\\
\hline
0 0 0  &  $(fg)h = f(gh)$ & 
associativity of $\bfC_q^L$
\\
\hline
0 0 1 &  $f(gv) = (fg)v$ & left action of $\bfC_q^L$ on $V$
\\
0 1 0 & $(fv)g = f(vg)$ & commutativity of $\bfC_q^L$: $f \circ g^\sharp = g^\sharp \circ f$
\\
 1 0  0 & $(vf)g= v(fg)$ & $\sharp$ is an involution: $(fg)^\sharp = g^\sharp f^\sharp$
\\
\hline
 0 1 1 & $ (fv)w = f(vw)$ & true for all generators $f=R_{x,y}$ by Theorem \ref{th:spiration1} 
\\ 
1 0 1 & $(vf)w = v(fw)$ & idem
\\
 1 1 0&  $(vw)f = v(wf)$ & idem
\\
\hline
 1 1 1 &  $(uv)w = u(vw)$ & $R_{u,v}(w) = \langle uvw \rangle = R_{w,v} (u) = (R_{v,w})^\sharp u$
 \\
 \hline
\end{tabular}
\end{center} 
Next, we check that $\sharp$ is an involution.
For $X = (f,v)$, $Y = (g,w)$ one sees by direct computation that
$(X^\sharp)^\sharp = X$ and   $(X Y)^\sharp = Y^\sharp X^\sharp$ 
(the main point is to use that $(R_{v,w})^\sharp = R_{w,v}$). 
We also compute 
\begin{align*}
X X^\sharp & = (f,v) \cdot (f^\sharp,-v) = (f f^\sharp - R_{v,v}, -fv + (f^\sharp)^\sharp v )
\\ & 
= (f f^\sharp - q(v),0) 
= (f f^\sharp -q(v)) 1 = X^\sharp X ,
\end{align*}
so the claim holds for the norm given by $N((f,v)) = f f^\sharp - q(v)$.
Further,
$$
X + X^\sharp = 
(f,v) + (f^\sharp,-v) = (f + f^\sharp, 0) = \tau(f) 1. 
$$
so $\sharp$ is a scalar involution.
\end{proof}

\begin{remark}
If we replace $q$ by $\lambda q$, the product in $KD(\bA)$ is also multiplied by $\lambda$.
Such scale invariance does not hold 
 for abstract or concrete quaternion algebras: indeed,
$\bfC_q = \bfC_{-q}$, so the algebra
$\bfH_{-q}$ is not isomorphic to $\bfH_q$.
\end{remark}

\subsection{Structure of non-commutative spherical quadratic spaces.} \label{sec:structure?} 
So far, we have encountered the following non-commutative group spherical spaces: 
\begin{enumerate}
\item
$V = \K^4$, a non-degenerate quaternion algebra,
\item
$V = U \oplus W$, a non-trivial split null extension by $W$ of
a commutative space of dimension $>1$.
\end{enumerate}
Also, as a result of the ACD-construction,
the quadratic forms on $\K^4=\K^2 \oplus \K^2$ corresponding to the first case are exactly
those of the form $\widetilde q = q \oplus \lambda q$ (orthogonal sum of a non-degenerate
binary form $q$ and a non-zero multiple of $q$), which are also exactly the tensor 
products of two non-degenerate binary quadratic forms.

One might conjecture that a ``structure theorem for non-commutative group spherical spaces''
holds, as an analog of Theorem \ref{th:structure-commutative}, saying that (under some
assumptions on $\K$), these are {\em exactly} the non-commutative group spherical spaces. 
However,
for the time being we have no argument ensuring that all extensions in Case 2 have to 
be ``split'': 
in the commutative case, the restriction of the ternary product to the radical had to be zero,
by symmetry; but in the non-commutative case, we cannot use this argument, and it might
be possible that some sort of ``non-split extension'' might exist. 
The following example gives some idea of that kind of complications:

\begin{example}\label{ex:Ext}
Let $U = \K^2$ with binary form $q(x)=x_1^2$.
Its associated algebra is the algebra $\K[X]/(X^2)$ of dual numbers (Theorem
\ref{th:bfR}). 
Let $W = \K^2$ be the ``adjoint'' $U$ module, i.e., the natural action of $U$ on itself on
the right, and
$\K^4 = U \oplus W$ be the corresponding split null extension.
It is a non-commutative space, since $U$ has non-trivial involution.
In other words, it is a very degenerate quaternion algebra -- in fact,  it is  just
the exterior algebra $\wedge \K^2$ with its central involution and ternary product
$\langle uvw \rangle = u \wedge v^\sharp \wedge w$.
Its quadratic form is again $x_1^2$.
Thus the ``sphere''  $\{ x \in \K^4 \mid x_1^2 = 1 \}$ has a commutative (split extension of $\K$)
 and a 
non-commutative (extension of $U$) group structure.
The latter is in fact a double extension, since $U$ is already an extension of $\K$.
Geometrically, it is the ``second tangent bundle of $\K$'',
$TT \K$. 
Also, we see that a degenerate quadratic form $q$ may admit several,
non-isomorphic, $q$-compatible ternary para-associative products: thus
the group sphere structure is not uniquely determined by the quadratic form.
\end{example}


\section{Cayley-Dickson doubling}

\subsection{The ACD (Albert-Cayley-Dickson) construction.}\label{sec:ACD}
Following the presentation given by McCrimmon (\cite{McC}, p.160 ff.),
we define, for any algebra $\bA$ with involution $\sharp$, and scalar $\mu \in \K$,
a new algebra
$KD(\bA):=\bA_0 \oplus \bA_1$ (where $\bA_i = \bA$, for $i=0,1$ are two copies of $\bA$)
with new involution again denoted by $\sharp$, as follows:
\begin{equation}\label{eqn:KD}
(x_0,x_1)\cdot (z_0,z_1) :=
(x_0 z_0 + \mu z_1^\sharp x_1,
z_1 x_0 + x_1 z_0^\sharp ),
\end{equation}
\begin{equation}\label{eqn:KD'}
(x_0,x_1)^\sharp = (x_0^\sharp, - x_1).
\end{equation}
We do not assume  that the scalar $\mu$ be invertible.
Note that for $\mu = 0$ we get the binary formula
for the split null extension from the preceding subsection. If $\bA_0$ has a unit $1$,
and $m=(0,1)$ is  the corresponding element of $\bA_1$, then (\ref{eqn:KD}) is equivalent to the
following (labels as in \cite{McC}): for $a,b \in \bA_0$,

(KD0) $am = m a^\sharp$

(KD1) $ab=ab$

(KD2) $a(bm) = (ba)m$

(KD3) $(am)b=(a b^\sharp) m$

(KD4) $(am)(bm) = \mu \, b^\sharp a$

\begin{remark}\label{rk:warning}
A word of warning: the formulae in \cite{Fa}, p.\ 51 ff.\ are not the same --
whereas McCrimmon considers $KD(\bA) = \bA \oplus \bA m$ (writing $\bA$ on the
{\em left} of $m$),
Faulkner considers it as
$\bA \oplus s \bA$, writing $\bA$ on the {\em right} of his element $s=(0,1)$.
This leads to ``reverse'' formulae, corresponding to our distinction between 
{\em left and right ternary products}, see below.
Note also that $KD(\bA)$ is in general not associative. More precisely,
for invertible $\mu$, by the ``KD Inheritance Theorem'', \cite{McC}, p. 162:
\end{remark}

\begin{theorem}  For invertible scalars $\mu$,
the algebra $KD(\bA)$ satisfies:
\begin{enumerate}
\item
The new map $\sharp$ is always an involution, which is scalar if so was the old one.
Then new norm and trace are
$$
N((x_0,x_1)) = N(x_0) - \mu N(x_1), \qquad
t((x_0,x_1)) = t(x_1).
$$
\item
$KD(\bA)$ is commutative iff $\bA$ is commutative with trivial involution,
\item
$KD(\bA)$ is associative iff $\bA$ is commutative and associative,
\item
$KD(\bA)$ is alternative iff $\bA$ is associative with central involution.
\end{enumerate}
\end{theorem}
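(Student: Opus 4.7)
The plan is as follows. Each of the four items reduces to an identity on pure homogeneous elements of $KD(\bA) = \bA_0 \oplus \bA_1$, which by $\K$-multilinearity can be verified case-by-case on the grading, using the defining rules (KD0)--(KD4). Throughout I write $m = (0,1) \in \bA_1$ and identify $\bA$ with $\bA_0 \subset KD(\bA)$, so a general element has the form $a + bm$ with $a,b \in \bA$; invertibility of $\mu$ will be used only in two places, namely to ensure that $m$ is itself invertible in $KD(\bA)$ (so that one may cancel it) and to solve for $N(x_1)$ in Item 1.

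For Item 1, I would first show directly from (\ref{eqn:KD'}) that $\sharp$ is of order two and componentwise $\K$-linear, then verify anti-multiplicativity $(XY)^\sharp = Y^\sharp X^\sharp$ by comparing first and second components of the two sides using (\ref{eqn:KD}). For the scalar property, a short computation gives $X + X^\sharp = (x_0 + x_0^\sharp, 0)$ (the $\bA_1$-components cancel by sign) and $XX^\sharp = (x_0 x_0^\sharp - \mu x_1^\sharp x_1,0)$. Hence the new involution is scalar precisely when the old one is, and one reads off the claimed norm $N(X) = N(x_0) - \mu N(x_1)$ and trace $t(X) = t(x_0)$ (the statement as printed should read $t(x_0)$).

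For Item 2, commutativity of $KD(\bA)$ forces in particular $am = ma$ for all $a \in \bA_0$; but (KD0) reads $am = m a^\sharp$, so cancelling $m$ (legitimate because $\mu$ is invertible) forces the involution on $\bA_0$ to be trivial, and restricting to $\bA_0$ gives $\bA$ commutative. Conversely, trivial involution plus commutativity collapses all of (KD0)--(KD4) into symmetric relations, so $KD(\bA)$ becomes commutative.

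For Items 3 and 4, I expand the associator $[X,Y,Z] := (XY)Z - X(YZ)$ on all $2^3 = 8$ parity triples. The pure-even case recovers associativity of $\bA$. The single-odd cases, treated via (KD1)--(KD3), give identities such as $a(bm) = (ba)m$ versus $(ab)m$; requiring these to agree forces $ab = ba$, and analogous cancellations in the other positions likewise force commutativity, giving Item 3. For Item 4 one replaces $[X,Y,Z]$ by the two alternators $[X,X,Y]$ and $[Y,X,X]$ and, after linearisation, examines the same parity cases; now the commutativity constraints weaken, and the critical triple-odd case $[am,bm,cm]$, handled using (KD4), produces exactly the conditions that $\bA$ be associative and that the elements $a + a^\sharp$ and $aa^\sharp$ commute and associate with every element of $\bA$, i.e.\ that the involution be central. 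The main obstacle will be Item 4: individually each parity case is routine, but organising the bookkeeping so that the two conditions defining a central involution emerge cleanly (and no stronger conditions sneak in) requires systematic use of the linearised alternator identities and repeated cancellation of factors of $m$. This is also the step where the distinction between Items 3 and 4 becomes visible: centrality of the involution is strictly weaker than triviality of the involution combined with commutativity of $\bA$.
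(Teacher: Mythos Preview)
The paper does not actually prove this theorem: it is stated without proof, introduced as ``the `KD Inheritance Theorem', \cite{McC}, p.~162'', and the reader is referred to McCrimmon for details. So there is no proof in the paper to compare your proposal against.

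That said, your outline is essentially the standard argument (the one McCrimmon gives): case-split on the $\Z/2\Z$-grading using rules (KD0)--(KD4), and for each parity pattern read off the condition on $\bA$ and its involution that makes the relevant associator or alternator vanish. You also correctly catch the typo in the trace formula---it should indeed read $t(x_0)$, not $t(x_1)$. One small imprecision: in Item~4 it is not the triple-odd alternator alone that forces centrality of the involution; the conditions that $a+a^\sharp$ and $aa^\sharp$ commute and associate with everything emerge from several of the mixed-parity alternator cases taken together, and the triple-odd case by itself rather contributes toward associativity of $\bA$. But your overall plan, and your honest flagging of Item~4 as the bookkeeping bottleneck, are both on target.
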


\begin{remark}
As above, let $m=(0,1)$.
Then $N(m) = - \mu N(e)$. 
Therefore, starting with $N(e)=1$, one most often takes $\mu = -1$,
ensuring $N(m)=1=N(e)$.
For $\K=\R$, this leads to the sequence $\R,\bC,\bH,\bO$.
\end{remark}

\begin{Definition}
With notation as above, we call
\begin{itemize}
\item
{\em unarion algebra} $\bA = \K$ with product $xy = \lambda xy$,
\item
{\em binarion algebra} $\bA = \K^2$, algebra belonging to a binary quadratic form,
\item
{\em quaterninon algebra} $\bA = \K^4 = KD(\K^2)$, an extension of a binarion algebra,
\item
{\em octonion algebra} $\bA = \K^8 = KD(\K^4)$, an extension of a quaternion algebra.
\end{itemize}
\end{Definition}

\nin
Here, to define ``extensions''  we admit  any scalar $\mu$, thus slightly generalizing the terminology
from \cite{McC}.
The preceding theorem, slightly adapted, implies:
\begin{enumerate}
\item
all of these algebras have a scalar involution,
\item
a binarion algebra is commutative, associative, with non-trivial involution,
\item
a quaternion algebra is associative, and non-commutative, except freak cases,
\item
an octonion algebra is alternative, and non-associative, except freak cases.
\end{enumerate}
\nin
The last case will be considered later.
Let us, however, compute the {\em ternary product in $KD(\bA)$}, including the last
case (so there are two versions of ternary product, ``left''
$a(b^\sharp c)$ and  ``right''  $(ab^\sharp) c$, since $KD(\bA)$ is in general 
non-associative).

\begin{theorem}\label{th:ABCD}
Assume $\bA$ is an associative algebra with involution $\sharp$, and define  ternary products
on $\bA$ by
$\langle xyz \rangle := xy^\sharp z$ and on $KD(\bA)$ by
$$
\langle (x_0,x_1),(y_0,y_1),(z_0,z_1) \rangle_L :=
(x_0,x_1) \cdot \bigl( (y_0,y_1)^\sharp (z_0,z_1 ) \bigr)  .
$$
Then we have
\begin{align*}
\langle (x_0,x_1),(y_0,y_1),(z_0,z_1) \rangle_L & =
\Bigl( 
\langle x_0 y_0 z_0 \rangle - \mu 
\langle x_0 z_1 y_1 \rangle + \mu \langle y_0 z_1 x_1 \rangle -  \mu \langle z_0 y_1 x_1 \rangle ,
\\
& \quad \quad
\langle x_1 z_0 y_0 \rangle - \langle y_1 z_0 x_0 \rangle + \langle z_1 y_0 x_0 \rangle 
+ \mu \langle x_1 y_1 z_1 \rangle \Bigl) .
\end{align*}
There is a ``reverse'' formula, using the ``dual'' definition ($KD'$) of the KD-extension (see
Remark \ref{rk:warning}), and the ``right ternary product'' on $KD'(\bA)$ given by
\begin{align*}
\langle (x_0,x_1),(y_0,y_1),(z_0,z_1) \rangle_R & :=
\bigl( (x_0,x_1) \cdot  (y_0,y_1)^\sharp \bigr)  (z_0,z_1 )  
\\
& =
\Bigl( 
\langle x_0 y_0 z_0 \rangle - \mu 
\langle x_1 z_1 y_0 \rangle + \mu \langle y_1 z_1 x_0 \rangle -  \mu \langle z_1 y_1 x_0 \rangle ,
\\
& \quad \quad
\langle x_0 z_0 y_1 \rangle - \langle y_0 z_0 x_1 \rangle + \langle z_0 y_0 x_1 \rangle 
+ \mu \langle x_1 y_1 z_1 \rangle \Bigl) .
\end{align*}
\end{theorem}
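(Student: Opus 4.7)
The plan is to prove this theorem by straightforward unwinding of the definitions of the KD-extension product (\ref{eqn:KD}) and the involution (\ref{eqn:KD'}), followed by recognition of the resulting expressions as ternary products $\langle abc\rangle = ab^\sharp c$ on the associative algebra $\bA$. No conceptual input beyond associativity of $\bA$ and the fact that $\sharp$ is an anti-involution is required; the content of the theorem is a concrete formula extracted from two nested multiplications.

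Concretely, I would proceed in three steps. First, I would compute the involution of the middle argument: $(y_0,y_1)^\sharp = (y_0^\sharp,-y_1)$. Second, applying (\ref{eqn:KD}) to multiply this with $(z_0,z_1)$ on the right gives the pair with components $u_0 = y_0^\sharp z_0 - \mu z_1^\sharp y_1$ and $u_1 = z_1 y_0^\sharp - y_1 z_0^\sharp$. Third, I would multiply $(x_0,x_1)$ with $(u_0,u_1)$ using (\ref{eqn:KD}) again; this requires computing $u_0^\sharp = z_0^\sharp y_0 - \mu y_1^\sharp z_1$ and $u_1^\sharp = y_0 z_1^\sharp - z_0 y_1^\sharp$ by the anti-automorphism property $(ab)^\sharp = b^\sharp a^\sharp$. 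Expanding the four products in each component and rewriting each summand $a\,b^\sharp\, c$ as $\langle abc\rangle$ yields the claimed expressions: the first component produces the four terms $\langle x_0 y_0 z_0\rangle$, $\mu\langle x_0 z_1 y_1\rangle$, $\mu\langle y_0 z_1 x_1\rangle$, $\mu\langle z_0 y_1 x_1\rangle$ with the stated signs, and the second component produces the analogous four terms.

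For the ``right'' version, the procedure is entirely parallel, using the dual KD' product (cf.\ Remark \ref{rk:warning}); one first forms $(x_0,x_1)\cdot(y_0,y_1)^\sharp$ and then multiplies by $(z_0,z_1)$ on the right. It is precisely because $KD(\bA)$ fails to be associative in general (cf.\ the KD Inheritance Theorem above) that the two bracketings yield genuinely different trilinear maps, and so both formulas must be computed separately rather than deduced from one another. The only real obstacle is bookkeeping: keeping track of which side each factor sits on, of the signs coming from $(y_0,y_1)^\sharp$, and of the order-reversal under $\sharp$ when expanding $u_0^\sharp$ and $u_1^\sharp$. There is nothing to prove beyond a careful, disciplined expansion.
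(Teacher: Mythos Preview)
Your proposal is correct and takes exactly the same approach as the paper: the paper's entire proof reads ``Direct computation.'' Your three-step expansion (compute $(y_0,y_1)^\sharp$, multiply by $(z_0,z_1)$ via (\ref{eqn:KD}), then multiply $(x_0,x_1)$ by the result) is precisely what that phrase abbreviates, and your remark that the left and right versions must be computed separately because $KD(\bA)$ need not be associative is on point.
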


\begin{proof}
Direct computation. 
\end{proof}

\begin{remark}
Note that  for $\mu=0$ we get the formula for the split null extension.
In this case, no product $\bA_1 \times \bA_1 \to \bA_0$ is needed, and for this
reason we can allow $\bA_1$ to be a more general space.
Similarly for left modules. 
\end{remark}

\section{Moufang loop spheres}\label{sec:Moufangspheres}

\subsection{From group to loop spheres.}
The category of group spherical  spaces is quite rigid and lacks many
``usual'' constructions:  group spheres are ``rare'' in nature;
there are no ``direct sums'' nor ``tensor products''.
The closest analog of a direct sum is the ACD-doubling construction,
but already  the unit sphere of the octonions is not a group, but a
{\em Moufang loop}, cf.\ \cite{CS}.
As is well-known (\cite{McC, Fa}), for non-degenerate forms, this follows necessarily
from the composition rule.
Therefore the category of {\em Moufang loop spheres} seems to be the most
natural generalization.
In a first step, we  develop a {\em ternary concept of Moufang loops}:
this is given in  Appendix \ref{app:torsors}, Definition \ref{def:TernaryMoufangLoop}.

\begin{Definition}\label{def:Mouf-quadratic}\label{def:Moufang-spherical}
A {\em right Moufang spherical space} is a quadratic space $(V,q)$
such that $V^\times$ is not empty,  with
a trilinear product $\langle xyz \rangle$ on $V$ such that:
\begin{enumerate}
\item
Ternary composition rule $q(\langle xyz \rangle)= q(x)q(y)q(z)$,
\item
Left and Right Kirmse $\langle xxy \rangle = q(x)y = \langle yxx \rangle$,
\item
The ternary product is a {\em right alternative triple system} in the sense of
\cite{Lo75}, p.57: it satisfies the three identities

(A1) $\langle uv \langle xyz \rangle \rangle +
\langle xy \langle uvz \rangle \rangle = 
\langle \langle uvx \rangle yz \rangle + \langle x \langle vuy \rangle z \rangle$

(A2)
$\langle \langle uvx\rangle yx\rangle  = \langle uv\langle xyx\rangle \rangle$ 

(A3)  $ \langle xy\langle xyz \rangle \rangle = \langle \langle xyx \rangle yz\rangle $
\end{enumerate}
A {\em left Moufang spherical  space} is defined similarly, replacing (A1), (A2), (A3)
by their ``dual'' identities obtained by reversing order of arguments (cf.\ \cite{Lo75}, p.59).
\end{Definition}

\begin{theorem}\label{th:binary-ternary2}
There is a bijection between alternative (generalized) composition algebras and
right Moufang spherical  spaces, up to choice of base point. More precisely, 
any alternative composition algebra with ternary product
$\langle xyz \rangle = x(y^\sharp z)$ and $q$  given by
$xx^\sharp = q(x) 1$, is a right  Moufang spherical  space.
For the other direction,
let $(V,q)$ be a right Moufang  spherical  space, and
fix  $e \in V^\times$. Then
\begin{enumerate}
\item
$V^\times$ with $(xyz) := \frac{\langle xyz \rangle}{q(y)}$ is a ternary right Moufang loop, and every sphere is a ternary subloop
of $V^\times$,
\item
the bilinear product $xz:= x \cdot_e z = \frac{\langle xez \rangle}{q(e)}$ turns $V$ into an alternative algebra with
neutral element $e$, and with involution $\sharp : V \to V$,
$$
x^\sharp = \frac{ \langle exe \rangle}{q(e)} = \frac{ b_q(x,e)}{q(e)} e - x ,
$$
and
this algebra is a {\em composition algebra} with respect to the {\em norm}
$N(x):= \frac{q(x)}{q(e)}$.
\end{enumerate}
In a similar way, alternative composition algebras also correspond to left ternary 
Moufang quadratic spaces. 
\end{theorem}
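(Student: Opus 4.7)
The plan is to mimic the proof of Theorem \ref{th:binary-ternary} step by step, replacing ``associative'' by ``alternative'' and ``torsor'' (resp.\ ``group'') by ``ternary Moufang loop'' (resp.\ ``Moufang loop''). The essential new ingredient is Loos's dictionary \cite{Lo75} between alternative binary algebras with involution and right alternative triple systems, which translates each of the (right) Moufang identities of the binary world into one of the axioms (A1), (A2), (A3).

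For the easy direction, start from an alternative composition algebra $(\bA,\sharp,N)$ and set $\langle xyz\rangle := x(y^\sharp z)$. The Kirmse identities $\langle xxy\rangle = N(x)y = \langle yxx\rangle$ follow from left/right alternativity together with $x x^\sharp = x^\sharp x = N(x)1$; the ternary composition law (TC) follows from the multiplicativity $N(ab)=N(a)N(b)$, which is standard for alternative composition algebras; and (A1)--(A3) are exactly the right-alternative triple identities derived by Loos from the Moufang identities for $(xy)^\sharp = y^\sharp x^\sharp$ and alternativity. The statement for ternary left Moufang spherical spaces is obtained by the symmetric choice $\langle xyz\rangle := (x y^\sharp) z$.

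For the harder direction, fix $e\in V^\times$ in a right Moufang spherical space. Item 1 is proved exactly as in Theorem \ref{th:torsortheorem}: rescaling by $q(y)$ turns (TC) and (K) into the unit laws of a ternary loop, while (A1)--(A3) become the defining identities of a ternary right Moufang loop as in Theorem \ref{th:ternaryMouf}; stability of spheres under $(xyz)$ again follows from (TC). For Item 2, the binary product $x\cdot_e z := \langle xez\rangle/q(e)$ has $e$ as unit by (K). Right alternativity $(u\cdot_e x)\cdot_e x = u\cdot_e(x\cdot_e x)$ is exactly (A2) with $v=y=e$, namely $\langle\langle uex\rangle e x\rangle = \langle ue\langle xex\rangle\rangle$; left alternativity is (A3) with $y=e$. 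The map $\sharp = s_e = S_{e,e}/q(e)$ fixes $e$ and is an isometry of order $2$ by Lemma \ref{lemma:Q}, and its scalar property reduces to the two relations $x+x^\sharp = \frac{b_q(x,e)}{q(e)}\,e$ (immediate from the formula for $\sharp$) and $xx^\sharp \in \K e$; the latter unfolds to $xx^\sharp = \frac{1}{q(e)^2}\langle xe\langle exe\rangle\rangle$, which I would show equals $\frac{q(x)}{q(e)}e$ by combining (A3) with $(K)$ and $(TC)$ to pin down its length and direction. Finally, the two constructions are mutually inverse: the computation proving $\langle xyz\rangle = q(e)\, x\cdot_e(y^\sharp\cdot_e z)$ is parallel to Item 2 of Theorem \ref{th:binary-ternary}, but one must now be careful to use the specific bracketing $x(y^\sharp z)$ rather than $(xy^\sharp)z$, which is exactly the reason for the asymmetry left/right.

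The main obstacle I anticipate is the anti-automorphism property $(xy)^\sharp = y^\sharp x^\sharp$ of $\sharp$ with respect to $\cdot_e$: without full para-associativity we cannot simply repeat the two-step para-associative collapse used in Theorem \ref{th:binary-ternary}, and a direct expansion leaves us with $\frac{1}{q(e)^2}\langle e\langle xey\rangle e\rangle$ versus $\frac{1}{q(e)^3}\langle\langle eye\rangle e\langle exe\rangle\rangle$, whose equality must be extracted from a Moufang-type identity hidden in (A1)--(A3). The cleanest way will be to first establish alternativity of $\cdot_e$ together with $x+x^\sharp, xx^\sharp \in \K e$, and then invoke the general fact (see \cite{McC, Fa}) that in an alternative unital algebra any linear map $\sharp$ fixing the unit, with $x+x^\sharp$ and $xx^\sharp$ scalar, is automatically a scalar involution, hence anti-automorphic; this sidesteps a direct non-associative computation and lets us conclude that $(V,\cdot_e,\sharp)$ is a composition algebra with norm $N=q/q(e)$.
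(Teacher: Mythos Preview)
Your proposal is correct and follows essentially the same route as the paper: both directions rest on Loos's correspondence between alternative algebras with scalar involution and right alternative triple systems (\cite{Lo75}, pp.~60, 64), and Item~1 is handled exactly as in Theorem~\ref{th:torsortheorem}, with (A2), (A3) specializing to the identities (MT1), (MT2) of a right ternary Moufang loop. One small correction: the relation $xx^\sharp = N(x)e$ unwinds via (A2) rather than (A3)---take $u=x$, $v=y=e$ in (A2) to get $\langle xe\langle exe\rangle\rangle = \langle\langle xee\rangle xe\rangle = q(e)\langle xxe\rangle = q(e)q(x)e$. On the anti-automorphism of $\sharp$ you are in fact more careful than the paper, which simply says this ``follows the same line of arguments as in the proof of Theorem~\ref{th:binary-ternary}'': you rightly note that the two-fold para-associative collapse used there is not directly available in the Moufang setting, and your proposed workaround---first establish alternativity and the scalar nature of $x+x^\sharp$ and $xx^\sharp$, then invoke the classical fact (Artin's theorem plus the degree-$2$ Cayley--Hamilton identity, as in \cite{McC}) that the conjugation $x\mapsto t(x)1-x$ on a quadratic alternative algebra is automatically an involution---is a clean and legitimate way to close this step.
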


\begin{proof}
Assume first that $(\bA,e,\sharp)$ is an alternative composition algebra.
Every such  algebra gives rise to an alternative triple system via
$\langle xyz \rangle = x(y^\sharp z)$
(see \cite{Lo75}, p.60).
Since $x+x^\sharp = t(x) e$ with $t(x) \in \K$, so $x^\sharp = t(x)e-x$, 
from alternativity $L_x^2 = L_{x^2}$ and $R_x^2 = R_{x^2}$ we get
$L_x L_{x^\sharp} = q(x) \id$ and
$R_x R_{x^\sharp} = q(x) \id$, and from this both Kirmse identities follow:
$\langle xxy \rangle = L_x L_{x^\sharp} y = q(x)y = y (x^\sharp x)= \langle yxx \rangle$.
To prove the ternary composition law, it suffices to show that
$q(ab) = q(a)a(b)$ and $q(a^\sharp)=q(a)$. The latter follows from 
$aa^\sharp = q(a)e=a^\sharp a = q(a^\sharp)e$.
Every alternative ring satisfies the Moufang condition  
$(ab)(ca)=(a(bc))a=a((bc)a)$ (cf.\   \cite{Fa}, Lemma 3.16).
Using this,
\begin{align*}
(ab)(ab)^\sharp &=
(ab)(b^\sharp a^\sharp)=- (ab)(b^\sharp a )+t(x) (ab)b^\sharp \\
& =
- a q(b) a + t(x) a q(b) = q(b) (t(x)a-a^2) = q(b) aa^\sharp = q(b) q(a)e .
\end{align*}
Putting things together, we get the ternary composition rule. 
As to the converse:

1. Idempotency $(xxy)=y=(yxx)$ follows from the ternary composition law, as in the proof of
Theorem \ref{th:binary-ternary}.
Then (A2) and (A3) become the properties

(MT1)  $\langle \langle uvx\rangle yx\rangle  = \langle uv\langle xyx\rangle \rangle$ 

(MT2)  $ \langle xy\langle xyz \rangle \rangle = \langle \langle xyx \rangle yz\rangle $

\nin
defining a ternary Moufang loop in \cite{BeKi14}, and which are equivalent to the definition
of a right ternary Moufang loop
given in Appendix A, Def.\ \ref{def:TernaryMoufangLoop}, cf.\ Remark \ref{rk:BeKi}.
Thus $V^\times$ becomes a right ternary Moufang loop.

2. The fact that every homotope algebra $\cdot_e$ is alternative
follows directly from the defining axioms
(cf.\ \cite{Lo75}, p.64), and proving that $\sharp$ is a central involution follows the same line
of arguments as in the proof of Theorem \ref{th:binary-ternary}.

All arguments remain valid with ``right'' replaced by ``left''.
\end{proof}

Definition \ref{def:Mouf-quadratic} follows the axioms given by Loos \cite{Lo75}, and used
in \cite{BeKi14}. However, they are not very ``geometric''.
The following list of properties is longer, but more conceptual, and could be used to give
an equivalent definition (possibly under some assumptions on the $\K$-module $V$).
The observation is simply that every identity valid for Moufang loops has a
``$q$-analog'', which inserts a term $q(u)$ if in an identity two instances of the argument $u$
are ``cancelled''.
For instance, the $q$-analog of idempotency is Kirmse (K).

\begin{theorem}
A  left Moufang spherical  space has the following properties:
it satisfies the ternary composition rule and the left and right Kirmse identitis, and, 
for invertible elements $x,y,\ldots$: 
\begin{enumerate}
\item
the $q$-analogs of the defining identities of an inverse loop:

$\langle xy \langle \langle yxy \rangle y u \rangle \rangle =
q(x) q(y)^2 u = \langle \langle yxy \rangle y \langle xyz \rangle \rangle$

$\langle \langle u y \langle yxy \rangle \rangle yx \rangle =
q(x) q(y)^2 u = \langle \langle uyx \rangle y \langle yxy \rangle \rangle$,
\item
the $q$-analog of the left Chasles relation:

$\langle ab \langle bdx \rangle \rangle = q(b)  \langle adx \rangle$,
\item
and the $q$-analog of the autotopy relation from Def.\ \ref{def:TernaryMoufangLoop}:

$\langle \langle xab \rangle \langle yba \rangle \langle zba \rangle \rangle =
q(a)q(b) \langle \langle xyz \rangle ab \rangle$
\end{enumerate}
{\em Right Moufang spherical quadratic spaces} have similar properties,
replacing the
Left by the Right Chasles relation and the autotopy relation by its right analogue. 
\end{theorem}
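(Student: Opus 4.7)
The plan is to deduce all listed identities from the fact that, in a Moufang spherical space $(V,q)$, the set $V^\times$ with the normalized ternary product $(xyz) := \langle xyz \rangle/q(y)$ is a ternary Moufang loop, and then to translate every identity valid in such a loop back into a $q$-analog for the trilinear product. More precisely, by Theorem \ref{th:binary-ternary2}\,(1) (in its left version), $V^\times$ is a left ternary Moufang loop under $(\,-\,-\,-\,)$; and by Theorem \ref{th:ternaryMouf} of Appendix \ref{app:torsors}, such a loop is characterized as a ternary inverse loop satisfying the left Chasles relation and the left autotopy identity. The ternary composition rule and both Kirmse identities are part of the definition of a Moufang spherical space (Definition \ref{def:Mouf-quadratic}), so nothing is to be shown for them.

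The core mechanism of translation is the simple formula $(xyz) = \langle xyz \rangle / q(y)$ together with the composition rule $q(\langle xyz \rangle) = q(x)q(y)q(z)$, which I would apply systematically. For instance, the left Chasles identity $(ab(bdx)) = (adx)$ unfolds to
\begin{equation*}
\frac{\langle a,b,\langle b,d,x\rangle\rangle}{q(b)\,q(d)} = \frac{\langle a,d,x\rangle}{q(d)},
\end{equation*}
which after clearing $q(d)$ yields exactly the claimed identity $\langle ab\langle bdx\rangle\rangle = q(b)\langle adx\rangle$. For the autotopy identity $((xab)(yba)(zba)) = ((xyz)ab)$, I would compute that the middle argument $(yba)$ has norm $q(y)q(a)/q(b)$, so unfolding each $(\,-\,-\,-\,)$ and using the composition rule gives a total factor of $1/(q(a)^2 q(b) q(y))$ on the left and $1/(q(a)q(y))$ on the right, so that clearing denominators produces precisely $q(a)q(b)\langle\langle xyz\rangle ab\rangle$ on the right. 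The inverse loop identities in Appendix \ref{app:torsors} have the same form: each instance of normalization contributes a factor of $q$ on some middle slot, and collecting these factors produces the displayed constants $q(x)q(y)^2$.

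For the right Moufang case, I would repeat the same translation scheme starting from the right version of Theorem \ref{th:binary-ternary2}, using the right Chasles relation and the right autotopy identity from Appendix \ref{app:torsors}. No new work is needed here beyond mirroring the left argument.

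The main obstacle I anticipate is purely bookkeeping: in each identity one must track which slot of the ternary product plays the role of the middle argument after every composition, since only the middle slot contributes the normalization factor $q(-)$, and this factor must be paired correctly with the composition rule $q(\langle xyz\rangle)=q(x)q(y)q(z)$ to match the exact powers of $q$ appearing in the stated formulas. Beyond this, the identities hold on $V^\times$ by the above translation, which is all the theorem asserts; no further extension to $V$ is required.
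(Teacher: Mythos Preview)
Your proposal is correct and follows essentially the same route as the paper: the paper's proof simply observes that these identities, stripped of the $q$-factors, are the defining properties of a left ternary Moufang loop, and hence hold on $V^\times$ by the preceding theorem; inserting $(xyz)=\langle xyz\rangle/q(y)$ then produces the stated $q$-analogs. Your explicit bookkeeping for the Chasles and autotopy relations spells out exactly what the paper leaves implicit, and your remark that the claim is only asserted on $V^\times$ matches the paper's parenthetical comment that extension to all of $V$ would require a density argument not pursued here.
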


\begin{proof}
These properties, without the terms $q(x)$, etc., are our defining properties of a 
left ternary Moufang loop, and therefore hold for the ternary loop structure defined in the
preceding theorem. Inserting the definition $(xyz) = \frac{\langle xyz \rangle}{q(y)}$
implies the formulae from the claim. 
(``By density'' these formulae will remain valid outside $V^\times$, and under some
assumptions extend to the whole
of $V$; we will not go into such details.)
\end{proof}

\subsection{Doubling: ABCD-construction,  octonions, and dicyclic loop of a group sphere.}
There exist {\em proper} Moufang spherical spaces, i.e., Moufang spheres that are not
associative: 
starting with a non-commutative group spherical space $(V,q)$, fixing a unit element
$e$, the ACD-double $KD(\bA)$ of the algebra $\bA = (V,e)$, is a non-associative
octonion algebra (Section \ref{sec:ACD}), hence defines a proper Moufang spherical space.
In order to avoid the apparent dependence on the choice of base point, and the involution
$\sharp$ depending on this choice, we have given in
Theorem \ref{th:ABCD} a ternary version (``ABCD construction'') of the ACD-construction.

\begin{example}
Starting with a non-zero binary quadratic form $q$ on $\K^2$, we may form its
concrete Clifford-quaternion algebra $\bfH_q$ (which is non-commutative and unital even if the form $q$ did not 
represent $1$), and then its ACD-extension $KD(\bfH_q)$ with $\mu=-1$, which is non-associative.
We call it the {\em octonion algebra associated with a binary quadratic form}. 
\end{example} 

\begin{theorem}
Let $(V,q)$ be group spherical space with base point $e \in V^\times$, 
and
$\bA$ its homotope composition algebra at $e$.
The Moufang loop  $KD(\bA)^\times$ contains as subloop the Moufang double
(Appendix \ref{app:Moufang-double}, with $\eps = -1$)
$$
D(V^\times) = \bA^\times \sqcup \bA^\times
$$
of the group of invertible elements of $V$.
Moreover, if $\mu = - 1$, then
a sphere $S$ of $KD(\bA)$ satisfies:
$S_0:=S\cap \bA \not= \emptyset$ iff
$S_1 := S \cap \bA m \not= \emptyset$, and then
$S_0 \sqcup S_1$ is the dicyclic loop  of $(S_0,-e)$ (Def.\ \ref{def:dih(G)}).
\end{theorem}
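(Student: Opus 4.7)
The plan is to verify both statements by a direct case analysis using the explicit multiplication formula (\ref{eqn:KD}) together with the base-point relations (KD0)--(KD4), reducing everything to identities in $\bA$ that match the axioms of the Moufang double and of the dicyclic loop.

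For the subloop statement, I shall identify $\bA$ with its image $(\bA, 0) \subset KD(\bA)$, set $m := (0, 1)$, and view $\bA m$ as $(0, \bA)$. A case analysis on (\ref{eqn:KD}) will show that the union $\bA \sqcup \bA m$ is stable under the product: products within $\bA$ remain in $\bA$, products $(am)(bm) = \mu\, b^\sharp a$ lie in $\bA$, and the cross products $a(bm) = (ba)m$ and $(am)b = (ab^\sharp)m$ lie in $\bA m$. Restricting to invertible elements (stable under multiplication because the norm on $KD(\bA)$ is multiplicative) yields a subloop $\bA^\times \sqcup \bA^\times m \subset KD(\bA)^\times$. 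Comparing these identities with the defining axioms of the Moufang double $D(V^\times)$ in Appendix \ref{app:Moufang-double} with $\eps = -1$, one recognises (KD0)--(KD4) together with $m^2 = \mu$ as exactly the defining relations of $D(V^\times)$; the required isomorphism $D(V^\times) \to \bA^\times \sqcup \bA^\times m$ is the identity on the first copy and sends the second copy $a$ to $am$.

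For the dicyclic part, I set $\mu = -1$ and invoke the KD Inheritance Theorem to obtain the norm formula $N((x_0, x_1)) = N(x_0) + N(x_1)$. Then $(x_0, 0) \in S$ iff $N(x_0) = c$, and $(0, x_1) \in S$ iff $N(x_1) = c$, so both conditions amount to the same solvability problem in $\bA$, proving the equivalence $S_0 \neq \emptyset \iff S_1 \neq \emptyset$. A direct computation yields $m^2 = (\mu, 0) = -e$ and $am = ma^\sharp$, so $m$-conjugation restricts to the involution $\sharp$ on $\bA^\times$; on the sphere $S_0$, this involution coincides (up to the rescaling intrinsic to the torsor operation) with the loop-inverse, and $-e$ appears as a central involution acting on $S_0$. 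Matching these relations with the presentation of the generalized dicyclic loop from Definition \ref{def:dih(G)} furnishes the identification $S_0 \sqcup S_1 \to S_0 \sqcup S_0$ via $x \mapsto x$ on $S_0$ and $xm \mapsto x$ on $S_1$.

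The main obstacle I anticipate is not the computation itself, which is mechanical, but harmonising the sign and normalisation conventions across the three constructions: the Moufang-double parameter $\eps$ of Appendix \ref{app:Moufang-double}, the ACD parameter $\mu$ of the KD-construction, and the scalar $c$ parametrising the sphere must all be lined up unambiguously. In particular, when $c \neq N(e)$ the element $-e$ does not lie in $S_0$, so one must interpret the dicyclic loop of $(S_0, -e)$ as the construction where $-e$ acts as an external central order-two element on the ternary loop $S_0$, and verify that the identity $u^2 = -c\,e$ for $u \in S_1$ rescales under the sphere-torsor operation to the expected dicyclic relation.
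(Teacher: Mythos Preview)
Your proposal is correct and follows essentially the same route as the paper: the paper's proof consists of the single observation that the rules (KD0)--(KD4) from Section~\ref{sec:ACD} ``correspond exactly to the definition of the Moufang double of a group,'' and that for $\mu=-1$ one ``readily gets the formulae defining the dicyclic extension of $S_0$.'' Your case analysis spells out precisely this matching, and your use of the norm formula $N((x_0,x_1))=N(x_0)+N(x_1)$ to establish the equivalence $S_0\neq\emptyset \Leftrightarrow S_1\neq\emptyset$ is the intended argument.

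Your closing cautionary paragraph is well taken: the paper does not address the issue that for a sphere of level $c\neq N(e)$ the element $-e$ need not lie in $S_0$, nor does it spell out how the Moufang-double parameter $\mu'$ relates to the KD-parameter $\mu$ (one finds $\mu'\eps=\mu$, hence $\mu'=-\mu$ when $\eps=-1$). These are genuine normalisation subtleties that the paper's two-line proof leaves implicit; your awareness of them is an improvement, not a gap.
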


\begin{proof}
The rules (KD0) -- (KD4) from Section \ref{sec:ACD} correspond exactly to the definition
of the Moufang double of a group, and therefore this Moufang double is naturally contained
in $KD(\bA)$.
Moreover, for $\mu = -1$, we readily get the formulae defining the dicyclic extension of
$S_0$. 
\end{proof}

The preceding result is a kind of non-associative analog of the ``dicyclic extension of a commutative
group sphere'' (see Remark \ref{rk:dicyclic}):
the ``dicyclic extension of a non-commutative group'' is not a group, but a Moufang loop
(Appendix \ref{app:Moufang-double}). 
Note that this can also be formulated in terms of ternary products (Theorem \ref{th:Moufang-ternary}).

\begin{example}\label{ex:dicloop}
We start with the binary quadratic form
$q(x_1,x_2)=x_1^2 -  x_1 x_2 + x_2^2$ over $\K=\Z$.
The corresponding commutative group spherical space is the lattice
of {\em Eisenstein integers}  $\mathbb E = \mathbb E_2$.
Its unit sphere is $S = \{ \pm e_1, \pm e_2,\pm(e_1+e_2)  \}$; as a group, this
is the cyclic group $C_6$; by the way, here $S = V^\times$. 

Next, let $\mathbb E_4 = KD(\mathbb E)$ with $\mu = -1$.
It
contains the Moufang double of $(C_6,-1)$, which is the dicyclic group $Dic_3$ of cardinal $12$.

The final step
 $\mathbb E_8 = KD(\mathbb E_4)$ with $\mu = -1$ 
 contains a ``dicyclic Moufang double'' of $Dic_3$, which is a certain Moufang loop of cardinal
 $24$.
 
 After scalar extension $\otimes_\Z \R$ from $\Z$ to $\R$, this is isomorphic to the usual octonions over
 $\R$, but not with the usual sublattice leading the usual integral octonion loop of cardinal $16$
(see \cite{CS}, Chapter 9).
\end{example}

\subsection{Split null extensions, and the structure of Moufang spherical spaces.}
The split null extension (Def.\ \ref{def:rep}) of a group spherical space is the special case
$\mu=0$ of the ABCD-construction mentioned above, where the second factor need no longer
be isomorphic to the first, as $\K$-module.
Thus, by the general results on the ACD/ABCD-construction, it is Moufang spherical,
and we have the following analog of Theorem \ref{th:splitnull}:

\begin{theorem}\label{th:splitnull2}
For every group spherical space $(V,q)$, and every
right $(V,q)$-module $W$, the split null extension $V \oplus W$ is a
Moufang spherical  space.
It is not associative, unless $V$ is commutative or $W=0$. 
\end{theorem}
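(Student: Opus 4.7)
My plan is to run the same strategy as for Theorem~\ref{th:splitnull}: fix a base point, reduce to a binary composition algebra, and then appeal to Theorem~\ref{th:binary-ternary2}. Fix $e\in V^\times$ and rescale $q$ so that $q(e)=1$; by Theorem~\ref{th:binary-ternary}, this turns $V$ into an associative composition algebra $(\bA,e,\sharp)$ with norm $q$, and via the anti-isomorphism $V\to\bfC_q^R$, $v\mapsto R_{v,e}$, the right $(V,q)$-module $W$ acquires a right $\bA$-module structure in which $R_{v,e}$ acts as right multiplication by $v$. Using $R_{a,b}+R_{b,a}=b_q(a,b)\id$ from Theorem~\ref{th:spiration1}, it follows that $R_{e,v}$ acts on $W$ as right multiplication by $v^\sharp$. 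Substituting the middle argument $(e,0)$ in the split-null formula of Definition~\ref{def:rep} yields the binary homotope
\begin{equation*}
(x_0,x_1)\cdot(z_0,z_1)\;=\;\bigl(x_0 z_0,\; x_1 z_0^\sharp + z_1 x_0\bigr),
\end{equation*}
which is precisely the Albert--Cayley--Dickson formula~(\ref{eqn:KD}) with $\mu=0$, the second summand now being a general right $\bA$-module rather than a second copy of $\bA$. The base point $(e,0)$ lies in $\widetilde V^\times$ since $\widetilde q((x_0,x_1))=q(x_0)$.

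Next I will verify that this binary product, together with $(x_0,x_1)^\sharp:=(x_0^\sharp,-x_1)$, forms an alternative composition algebra on $V\oplus W$ with norm $\widetilde q$. A direct computation shows $\sharp$ is an order-two anti-automorphism, and
\begin{equation*}
(x_0,x_1)(x_0,x_1)^\sharp=(q(x_0)e,0)=\widetilde q((x_0,x_1))\cdot 1,\qquad (x_0,x_1)+(x_0,x_1)^\sharp=t(x_0)\cdot 1,
\end{equation*}
so $\sharp$ is a scalar involution with norm $\widetilde q$. Left alternativity $(xx)y=x(xy)$ follows by expanding both sides: the first components agree ($x_0^2 y_0$), and the second components each collapse, using $x_0+x_0^\sharp=t(x_0)e$ and associativity of the right $\bA$-action, to the common value $t(x_0)x_1 y_0^\sharp + y_1 x_0^2$; right alternativity is symmetric. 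Theorem~\ref{th:binary-ternary2} then produces a (right) Moufang spherical structure on $V\oplus W$ with ternary product $x(y^\sharp z)$, and a short calculation (essentially the $\mu=0$ case of Theorem~\ref{th:ABCD}) confirms that this recovered ternary product coincides with the split-null formula of Definition~\ref{def:rep}.

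For the final assertion I compare $(xy)z$ and $x(yz)$ in the binary algebra. The first components always agree since $\bA$ is associative, and the difference of second components equals
\begin{equation*}
x_1\,[y_0^\sharp,z_0^\sharp]\;+\;y_1\,[x_0,z_0^\sharp]\;+\;z_1\,[x_0,y_0],
\end{equation*}
where $[\cdot,\cdot]$ is the commutator in $\bA$. This vanishes identically when $\bA=V$ is commutative (recovering the associative content of Theorem~\ref{th:splitnull}) and trivially when $W=0$; otherwise, a noncommuting pair in $\bA$ acting nontrivially on a chosen $w\in W$ witnesses genuine non-associativity, so the ternary product is Moufang but not para-associative. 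The main obstacle in this plan is the identification in the first step, matching the $\bfC_q^R$-module structure with a right $\bA$-action so that the split-null trilinear formula becomes the $\mu=0$ ACD binary formula on the nose; once this identification is firmly in place, alternativity and the non-associativity dichotomy are mechanical computations parallel to the commutative case and to the ACD inheritance theorem.
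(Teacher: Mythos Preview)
Your proof is correct and follows exactly the route the paper sketches: the paper's ``proof'' is the sentence preceding the theorem, identifying the split null extension with the $\mu=0$ case of the ACD/ABCD-construction and invoking the general inheritance results, and you carry this out in detail---making the $\bfC_q^R$-module into a right $\bA$-module via the anti-isomorphism of Theorem~\ref{th:binary-ternary}(4), recognising the homotope as the $\mu=0$ KD-formula, and then verifying alternativity and the scalar-involution property by hand rather than citing the KD Inheritance Theorem (which, as stated in the paper, formally assumes $\mu$ invertible, so your direct check is in fact tighter). Your associator computation for the non-associativity clause is likewise what the paper has in mind; note only that ``a noncommuting pair acting nontrivially on some $w$'' tacitly assumes the module does not kill the commutator ideal of $\bA$, which is automatic for non-degenerate quaternion algebras but is left implicit in both your argument and the paper's statement.
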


As in Subsection \ref{sec:structure?}, one might wonder if there is some kind of ``structure theorem
for Moufang spherical spaces'', giving some rough classification:
the non-degenerate spaces are octonion algebras, and the degenerate spaces iterated
(split or non-split) null extensions of $\K$. 
Such degenerate Moufang spheres do exist:
we have seen (Theorem \ref{th:Minko}) that there are non-commutative group spherical spaces of any dimension $n>2$;
taking an ACD-extension of such a space defines a non-associative group spherical space of any even dimension
$2n$. 
It should be interesting to study in more detail such families of ``degenerate octonion algebras''.

\section{Outlook}\label{sec:outlook}

My interest in the topics discussed in the work comes from the interplay of ``Lie- and
Jordan geometries'', \cite{Be00, Be14}, and which are related to ``associative'' and
``alternative'' geometries, \cite{BeKi1, BeKi14}.
Group and loop spheres there appear as ``atoms'' and important building blocks, and
for this reason seem to deserve a name and a theory which fits into a  general theoretic framework,
for which the present work is a starting point.
Here are some topics which I would like to enlarge upon, time permitting:

\subsection{Plane geometry over $\K$.}
Since several years I have taught, in Nancy, the undergraduate course on  ``plane 
affine and Euclidean geometry'' and among other sources I used lecture notes by Max Koecher,
which later became the book
{\em  Ebene Geometrie} coedited  by 
Aloys Krieg, \cite{KK}. The authors take  care to develop the analytic side of affine and Euclidean
geometry in a  detailed and intrinsic way, proving most results by explicit and base-independent formulas. 
In loc. cit.,  p.88, they notice that most formulae continue to make sense when replacing the real number field
$\R$ by a general base field $\K$, and conclude:
{\em  
With a convenient ``geometric'' interpretation, we have developed at the same time a
\textbf{geometry over $\K$}. This already should justifiy our computational effort...}
For the sake
of pure mathematics, I think one should try to develop this approach  as far as
possible. One  starting point of the present work was an essay to make a
step into this direction, which lead to the elementary presentation of 
 Theorem \ref{th:circles}  chosen here.
 It would certainly be interesting to continue developing the ``plane geometry over
 $\K$'', in the sense of \cite{KK}, by making use of the group sphere structure.
 
\subsection{The Lister-Loos algebra.}
Every group spherical space is in particular an {\em associative triple system}.  Lister and Loos,  \cite{Li71, Lo72},
construct, for such triple systems, a binary ``enveloping algebra'' with involution given
by an idempotent,
such that the triple system becomes the $-1$-eigenspace of the involution, and
the triple product becomes the product of three elements.
Such an algebra is not unique, but in case of a group spherical space, one can show that it is 
essentially the algebra $M(2,2;V_e)$ of $2\times 2$-matrices with coefficients in
the homotope algebra at $e$.

Of course, for Moufang spherical spaces, this construction brakes down, and 
one rather is directed towards the general construction from \cite{Lo75},
\S 8, imbedding
alternative pairs as Peirce $1$-spaces of a Jordan pair with idempotent. 
Can one simplify this construction in the special case of a Moufang spherical case?
One may guess that is related to the Jordan algebra
${\rm Herm}(3,V_e)$ of Hermitian matrices with coefficients in the alternative algebra $V_e$,
 but this remains to be worked out.

\subsection{Projective imbedding, projective geometry.}
The invertible elements in the 
Lister-Loos algebra define  what one might call the ``projective group of
a group spherical space'' (the analog of  $\PP\GL(2,\bC)$), 
and one can define a ``projective completion of $V$'',
on which this group acts, very much in the way the Riemann sphere completes
the complex plane. 
In the commutative case, this generalizes the theory of 
\href{https://en.wikipedia.org/wiki/Benz_plane}{\em Benz planes};
in the non-commutative case, the theory becomes more complicated.
It should have a similar shape as the theory of {\em associative geometries}
from \cite{BeKi1}, but it is not a special case of it, and there remain new things to be
understood.
In particular, combined with methods from \cite{BeNe}, this should lead to a nice
``projective geometry of Benz planes''. 

\subsection{Jordan theory and Jordan geometries.}
Since every sphere is a symmetric space,
there is an ``underlying functor'' from group or Moufang spherical spaces to
Jordan geometries (cf.\ Remark \ref{rk:Jordan}), and also the projective completion
and the projective geometry of such spaces are  particular instances of
``Jordan geometries'', see \cite{Be00, Be14},  which are important for the general
theory.

\subsection{Pure algebra.}
It is quite amazing how more and more
complicated structures arise
from the simple datum of binary quadratic form: 
starting with 
$q(x)=\alpha x_1^2 + \beta x_1 x_2 + \gamma x_2^2$, 
we can do all constructions we are used to in the case of the complex
plane, leading up to the analog of the octonions, the Jordan algebra
of Hermitian $3\times 3$-matrices with octonion coefficiencts, and the ``big'' groups
and spaces associated  to it.
In other terms, it appears that starting from a binary quadratic form, there arises a 
whole ``Freudenthal's magic square'' (cf.\ \cite{Fa}, Chapter 14), and
all of of these structures depend in an explicit and functorial way on the binary quadratic  form.

\subsection{Lattices, root systems.}\label{sec:roots2}
The base ring $\K = \Z$ is admitted for our approach, and thus the theory 
works for {\em integral forms}, and in particular for {\em lattices}, which are 
an important object of study (cf.\ \cite{CSl, CS}).
In the present work, I have not even touched upon this aspect. 
Via the theory of {\em root systems} it enters the ``Jordan-Lie theory'',
cf.\ \cite{LoNe, Lo85}. In a personal communication,
O. Loos admitted to me  that he would like to still better
understand the description  of the ``Jordan-Lie functor''  in terms of root systems
that he gave in \cite{Lo85}, and to generalize it beyond the compact case. 
Possibly, the language developed here might be useful in such an attempt. 

\subsection{Gauss composition.}
Still concerning the base ring $\K=\Z$, one should expect some link of the 
theory presented here with the classical
\href{https://en.wikipedia.org/wiki/Gauss_composition_law}{\em
Gauss composition of binary quadratic forms}, and with
Bhargava's beautiful work \cite{Bh}.


\subsection{Invertible elements, pair concepts.}
Since Theorem \ref{th:circles} is valid for {\em any} quadratic form, even
without invertible elements, I tried in a first version to develop also the 
sequel of the theory in  his generality. 
To a certain extent, this is possible, but tends to become quite formal, and I abandoned this try.
Possibly, like in general Jordan theory, the good way to pursue this would be
to replace the concept of a ``triple system'' by a ``pair concept'', see the arguments
given by Loos in the introduction to \cite{Lo75},
and Section \ref{sec:polarized}.

\subsection{Super-group spherical spaces, and the ``tenfold way''.}
Like most algebraic concepts, the one of ``group spherical space'' admits a ``super-version''.
This should be
a $\Z/2\Z$-graded $\K$-module, with a graded para-associative trilinear map $\langle - - - \rangle$,
satisfying certain axioms:
the homogenous parts $V_0$ and $V_1$ should be quadratic spaces, the Kirmse identities should be
valid for triples of homogeneous elements of same parity, along with the composition formula.
This would generalize the concept of ``associative super division algebra'', see
\cite{Baez10}.
My impression is that such a concept could serve to better understand the ``tenfold way'' (loc.cit.),
in terms of graded algebras, in much the same way as I have attempted this for Clifford algebras over
rings continaing $\frac{1}{2}$, in \cite{Be21}.

\appendix
\section{Ternary products}\label{app:torsors} 

\subsection{Binary and ternary products; left, right, middle.}
An {\em $n$-ary magma} is a set $M$ with an $n$-ary ``product map''
$\mu : M^n \to M$.
When $n=2$, this is a {\em binary product}, and we often use the notation
$\mu(x,y) = x \cdot y = xy$, and when $n=3$, it is a {\em ternary product}, and
we often use one of the following
notations for $\mu(x,y,z)$:
\begin{equation}
\langle x \vert y \vert z \rangle 
\mbox{ or }
\langle x, y, z \rangle
\mbox{ or }
\langle xyz \rangle 
\mbox{ or }
(x \vert y \vert z ) 
\mbox{ or }
(xyz) .
\end{equation} 
For a binary product, we have {\em operators of left and right multiplication},
\begin{equation}
L_x(y) = x \cdot y = R_y(x),
\end{equation} 
and for a ternary product $\langle -,-,- \rangle$, we define 
operators $L_{x,y}:M\to M$ of  {\em  left multiplication},
$R_{u,v}$ of  {\em right multiplication}, and
$S_{a,b}:M \to M$ of  {\em middle multiplication}, by
\begin{equation}\label{eqn:LRS} 
L_{x,y}(z) := \langle x,y,z \rangle =: R_{z,y}(x) := S_{x,z}(y).
\end{equation}

Fixing one of the three arguments in a ternary product defines a binary product.
If we fix the {\em middle} element $y$, we call the binary product
\begin{equation}
x z := x \cdot_y z := \langle x y z \rangle ,
\end{equation}
the {\em homotope at $y$}. 
Then $L_x = L_{x,y}$ and $R_z = R_{z,y}$.

A $\K$-{\em algebra} is a $\K$-module together with a $\K$-bilinear binary product,
and a {\em triple system}, or {\em ternary algebra} is a $\K$-module together with a
$\K$-trilinear ternary product. 
(In this case we often prefer the notation $\langle -,-,-\rangle$.)

\subsection{Binary and ternary quasigroups and loops.} 
A {\em quasigroup} is given by a binary product such that all operators of
left and right multiplication are bijective, and a
{\em ternary quasigroup} is given by a ternary product such that all operators of
left, right and middle multiplication are bijective. 

A {\em loop} is a binary quasigroup together with a {\em unit element} $e$, that is,
there exists $e$ such that  $L_e = R_e = \id_M$.

 An {\em inverse loop} is a loop such that every element $x$ admits an {\em inverse $x^{-1}$}
 satisfying $L_x^{-1} = L_{x^{-1}}$ and $(R_x)^{-1} = R_{x^{-1}}$, i.e.,
 $x(x^{-1} y) = x^{-1}(xy)=
  y = (y x)x^{-1} = (yx^{-1})x$ (for $y=e$, this implies $x=(x^{-1})^{-1}$).

A {\em ternary loop} is a ternary quasigroup satisfying  the
{\em idempotency identity} 
$$
\forall x,y \in M: \qquad 
\langle x y y \rangle = x = \langle yy x \rangle
\leqno{\mathrm{(IP)}} 
$$
In other words, the homotope at $y$ has $y$ as unit element: $L_{y,y} = \id = R_{y,y}$.
From a logical point of view, 
the binary loop property is an {\em existence requirement}, whereas
the ternary one is an {\em identity} that holds for {\em all} elements. 
As a consequence,
idempotency (IP) is not compatible with trilinearity, except for the zero product. 
A main thread of the present work is that, for triple systems, (IP) has to replaced by another identity,
namely by the Kirmse-identity (K).

\subsection{Identities: commutativity.}
Binary and ternary products may satisfy (or not) various {\em identities}.
A binary product is called {\em commutative}, if it satisfies the identity
$xy=yx$.
A ternary product is called {\em commutative} if it satisfies 
$$
(xyz) = (zyx ),
\leqno\mathrm{(C)}
$$
and we say that it is {\em totally symmetric} if it is invariant under all
$6$ permutations of the three arguments.
Thus our notation reflects a {\em choice}:  often (but not always) the {\em outer}
variables will play somewhat similar roles, whereas the {\em inner} variable has a
different kind of role.

\subsection{Associativity, groups and torsors.}
The most important identity is {\em associativity}.
In the binary case, it reads as usual $x(yz)=(xy)z$, whereas in the ternary case,
there are {\em two different versions}: 
($\forall a,b,c,d, e \ldots \in M$):
{\em associativity} (AT1), or {\em para-associativity} (AT2),
\begin{align*}
\langle a , b,  \langle c , d , e \rangle \rangle &=
\langle a, \langle b , c ,d \rangle ,  e \rangle =
\langle \langle a,b,c \rangle , d,e \rangle 
\tag{AT1}
\\
\langle a , b,  \langle c , d , e \rangle \rangle & =
\langle a, \langle d , c ,b \rangle ,  e \rangle =
\langle \langle a,b,c \rangle , d,e  \rangle 
\tag{AT2}
\end{align*}

\begin{example}
For every binary associative product
 $M^2 \to M$, $(x,y) \mapsto xy$, the ternary product 
$\langle xyz \rangle := (xy)z = x(yz)$ satisfies (AT1), and if moreover
$M \to M$, $x \mapsto x^*$ is an anti-automorphism of order $2$, then the new product
$\langle xyz \rangle := (x y^*) z = x (y^* z)$ satisfies (AT2). 
In the other direction, given a ternary product satisfying
(AT1) or (AT2), for any $y \in M$, the {\em homotope}
$xz := x \cdot_y z := \langle x, y , z \rangle$.
is a binary associative product on $M$. 
\end{example}

\begin{theorem}[Torsors]\label{th:Torsors}
In every group $(G,\cdot)$, we define its {\em torsor structure} by
\begin{equation}
(xyz) := xy^{-1} z.
\end{equation} 
It satisfies {\rm (IP)} and {\rm (AT2)}.
Conversely, in any non-empty set $G$ with ternary product satisfying {\rm (IP)} and {\rm (AT2)},
for any $y \in G$, the homotope is a group law on $G$. 
\end{theorem}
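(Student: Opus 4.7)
The forward direction is immediate by direct calculation. For (IP), compute $(xyy) = xy^{-1}y = x$ and $(yyx) = yy^{-1}x = x$. For (AT2), all three expressions reduce to the same word $ab^{-1}cd^{-1}e$ in $G$: the crucial point is that since inversion is an anti-automorphism, $(dc^{-1}b)^{-1} = b^{-1}cd^{-1}$, so the middle expression $\langle a,\langle d,c,b\rangle,e\rangle = a(dc^{-1}b)^{-1}e$ agrees with the others.

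For the converse, fix $y \in G$ and set $x \cdot z := \langle x,y,z\rangle$. The plan is to verify the three group axioms in order. \emph{Associativity} follows by specializing (AT2) with $b=d=y$: the identity $\langle x,y,\langle z,y,w\rangle\rangle = \langle \langle x,y,z\rangle, y, w\rangle$ is exactly $x\cdot(z\cdot w) = (x\cdot z)\cdot w$. (Note that the middle clause of (AT2), with inner term $\langle y,z,y\rangle$, is not needed here — only the outer equality is.) \emph{Neutral element:} the identity (IP) gives $y\cdot x = \langle y,y,x\rangle = x$ and $x \cdot y = \langle x,y,y\rangle = x$, so $y$ is a two-sided unit.

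\emph{Inverses:} I claim that $x^{-1} := \langle y,x,y\rangle$ inverts $x$ in $(G,\cdot_y)$. To check $x\cdot x^{-1} = y$, apply (AT2) with $(a,b,c,d,e) = (x,y,y,x,y)$ to obtain
\[
\langle x, y, \langle y,x,y\rangle\rangle \;=\; \langle \langle x,y,y\rangle, x, y\rangle \;=\; \langle x, x, y\rangle \;=\; y,
\]
where the second equality uses (IP) on $\langle x,y,y\rangle = x$ and the third uses (IP) on $\langle x,x,y\rangle = y$. The verification of $x^{-1} \cdot x = y$ is symmetric: using (AT2) on $\langle \langle y,x,y\rangle, y, x\rangle = \langle y, \langle y,y,x\rangle, x\rangle = \langle y,x,x\rangle = y$, again by two applications of (IP).

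I do not expect any serious obstacle; the only thing requiring slight care is the second step, where one must pick the instantiations of (AT2) so that (IP) can collapse the resulting three-fold products. The proof is essentially a pair of five-variable substitutions, and it also makes transparent the slogan ``torsors are groups with unit forgotten'': different choices of $y$ produce different but isomorphic group laws, all canonically derived from the single ternary operation.
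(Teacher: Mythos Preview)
Your proof is correct and follows exactly the approach the paper has in mind: the paper's own proof is the one-line ``Straightforward computation: the neutral element in the homotope at $y$ is $y$, and the inverse is $x^{-1} = (yxy)$'', and you have simply written out that computation in full, with the same choice of unit and inverse and the appropriate instantiations of (AT2) and (IP).
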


\begin{proof} Straightforward computation:
the neutral element in the homotope at $y$ is $y$,
and the inverse is $x^{-1} = (yxy)$.
\end{proof}

\begin{Definition}\label{def:torsor}
Following previous work \cite{BeKi1}, we  will use the term {\em torsor} for a set with ternary
product satisfying (AT2) and (IP).
By {\em semi-torsor} we mean a ternary product satisfying (AT2), but possibly not (IP).\footnote{Unfortunately, there is no universally  adopted terminology for these objects (categories, in fact) -- 
other terms are {\em (semi)heap, principal homogeneous space, (semi)groud,...} -- 
see \cite{BeKi1} for some remarks concerning
terminology and its history.
}
\end{Definition} 

The preceding discussion can be summarized by the slogan:
{\em ``Torsors are for groups what affine spaces are for linear spaces.''}
Note that we are thus  naturally lead to (AT2), and not to (AT1), and it is for this reason that 
(AT2) plays a more important role than  (AT1).

\begin{Definition}
An {\em associative triple system (of the first, resp.\ second kind)}  is  a triple system
over $\K$
satisfying (AT1), resp.\ (AT2).
\end{Definition}

See \cite{Li71, Lo72} for  a  theory of associative triple systems.

\subsection{Weakening of  associativity: binary case.}
The binary associative law may be weakened in several ways in order to 
define structures that are ``close to associative''.
For instance, a binary product is called {\em alternative} if
\begin{equation}\label{eqn:alternative}
L_x^2 = L_{x^2}, \, R_x^2 = R_{x^2}: \qquad
x(xy) = (xxy), \,  (yx)x = y(xx) .
\end{equation} 
A binary loop with unit $e$ is an {\em inverse loop} if it has a unary operation
``inverse'' $x \mapsto x^{-1}$ such that
\begin{equation}
L_x^{-1} = L_{x^{-1}}, \, R_x^{-1} = R_{x^{-1}}: \quad
x^{-1} (xy) = y = x (x^{-1} y) = (yx)x^{-1} = (yx^{-1})x.
\end{equation}
It follows that $(x^{-1})^{-1} = x$, and the condition $xy = z$ can be written in six
ways (in \cite{CS}, Section 7.1, this is called the ``hexad'' of relations):
\begin{equation}\label{eqn:hexad1}
xy=z, \,
x=zy^{-1}, \,
z^{-1} x= y^{-1}, \,
z^{-1} = y^{-1} x^{-1}, \,
yz^{-1} = x^{-1}, \,
y = x^{-1} z.
\end{equation}
In particular, inversion then is an {\em involution} of the binary product (anti-automorphism of order
$2$), and (p.\ 87 loc.cit.) the {\em flexibility relation} 
$L_a R_a= R_a L_a =: B_a$ holds:
\begin{equation}
(ax)a=a(xa) =: B_a(x),
\end{equation}
defining the {\em operator $B_a$ of bimultiplication by $a$}.
The relation $x(yz)=e$ is equivalent to $(xy)z=e$, and written as
$xyz = e$, giving rise to {\em triality}, see Prop.\ \ref{prop:triality}.

\subsection{Ternary inverse loops.}
The ternary (para) associative law (AT2) can be weakened 
in many ways.

\begin{Definition}
A {\em ternary inverse loop} is a ternary loop $M$ such that, for each fixed $y \in M$,
the binary homotope at $y$ is an inverse loop with inverse $x^{-1} = (yxy)$,
that is, the following identitites hold:
$$
L_{x,y} \circ L_{(yxy),y} = \id = L_{(yxy),y} \circ L_{x,y}, \quad
(xy ((yxy) yu )) = u = ((yxy) y (xyu)),
$$
$$
R_{x,y} \circ R_{(yxy),y} = \id = R_{(yxy),y} \circ R_{x,y}, \quad
((u y (yxy)) yx) = u  = ((uyx) y (yxy)).
$$
\end{Definition}

It follows that, in the homotope at $y$, the flexibility and the automorphic inverse properties
hold, that is, we have also the following identities:
\begin{equation}
B_{a,y} := L_{a,y} \circ R_{a,y} = R_{a,y} \circ L_{a,y}, \qquad
(ay(x ay)) = ((ayx)ay) ,
\end{equation}
\begin{equation}
S_{y,y} ((xyz)) = ( S_{y,y}z,y, S_{y,y}x), \qquad
(y(xyz)y)=((yzy)y(yxy)) .
\end{equation}
Also, inversion must be of order two,
$(y(yxy)y) = x$.


\subsection{Ternary left- and right half-torsors}

\begin{Definition}
A {\em left half-torsor} is a ternary inverse loop satisfying the  {\em left Chasles relation}:
$$
L_{x,y} \circ L_{y,z} = L_{x,z}, \quad
(xy(yzu))= (xzu).
$$
A {\em right half-torsor} is a ternary inverse loop satisfying the {\em right Chasles relation}:
$$
R_{x,y} \circ R_{y,z} =  R_{x,z}, \quad
((uzy)yx) = (uzx) .
$$
\end{Definition}

\begin{lemma}\label{la:Chasles}
Every left half-torsor is given by, using the homotope at $e$, for arbitrary $e$,
$$
(xyz) =  x (y^{-1} z) = (xe ((eye) ez ) .
$$
Conversely, given a binary inverse loop, we can define a left half-torsor on the
same underlying set by letting $(xyz):= x(y^{-1} z)$.
Similar statements hold for  right half-torsors given by
$(xyz) = (xy^{-1})z$. 
In both cases, the following identities holds:
$$
\begin{matrix}
L_{x,y}^{-1}=L_{y,x},  \, &
R_{x,y}^{-1} = R_{y,x}, \, &
S_{x,z}^{-1} = S_{z,x}   \\
(xy(yxu))=u, \, &
((uxy)yx)=u, \, &
(x(zux)z)=u .
\end{matrix}
$$
\end{lemma}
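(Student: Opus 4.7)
I will establish both directions of the claimed correspondence by base-point reduction, then derive the listed inverse-operator identities from the left Chasles relation together with idempotency.

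\emph{From a left half-torsor to an inverse loop.} Given a left half-torsor $M$, I fix an arbitrary base point $e\in M$ and form the homotope $x\cdot z:=(xez)$. By idempotency, $e$ is a two-sided unit, and the ternary quasigroup property gives bijective left and right multiplications, so $(M,\cdot,e)$ is a binary loop. Setting $x^{-1}:=(exe)$, the four ternary inverse loop identities specialized to $y=e$ read exactly $L_xL_{x^{-1}} = L_{x^{-1}}L_x = \id$ and the analogous pair for $R$, making $(M,\cdot,e)$ a binary inverse loop. To recover the ternary product, I will use left Chasles to split $L_{x,y} = L_{x,e}\circ L_{e,y}$; the same relation in the form $L_{e,y}\circ L_{y,e} = L_{e,e}=\id$ then identifies $L_{e,y} = L_{y,e}^{-1}$, which in the binary inverse loop equals $L_{y^{-1}} = L_{(eye)}$. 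This gives $(eyz) = ((eye)ez) = y^{-1}\cdot z$, hence the desired formula $(xyz) = x(y^{-1}z) = (xe((eye)ez))$.

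\emph{Converse.} Given a binary inverse loop $(M,\cdot,e)$, I will define $(xyz):=x(y^{-1}z)$ and verify the axioms. Idempotency reduces to (LIP) and (RIP); left Chasles is the one-line identity
\[
(xy(yzu)) \;=\; x\bigl(y^{-1}(y(z^{-1}u))\bigr) \;=\; x(z^{-1}u) \;=\; (xzu),
\]
obtained from (LIP). For the listed identities, I will write the ternary operators as composites of binary ones: $L_{x,y}=L_x\circ L_{y^{-1}}$, so $L_{x,y}^{-1}=L_y\circ L_{x^{-1}}=L_{y,x}$, which is $(xy(yxu))=u$. For right operators, $R_{x,y}(u)=u\cdot(y^{-1}x)$ is binary right-multiplication by $y^{-1}x$, and since $(y^{-1}x)^{-1}=x^{-1}y$ by the anti-automorphism $(ab)^{-1}=b^{-1}a^{-1}$ valid in any inverse loop, $R_{x,y}\circ R_{y,x}$ collapses to $\id$ by (RIP), giving $((uxy)yx)=u$. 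The identity $S_{x,z}^{-1}=S_{z,x}$, equivalent to $(x(zux)z)=u$, follows by expanding $(x(y^{-1}z))^{-1}=(z^{-1}y)x^{-1}$ and applying (RIP) then (LIP). The right half-torsor case is treated by the mirror argument.

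\emph{Main obstacle.} The most delicate step will be reconciling two \emph{a priori} different expressions for $L_{x,y}^{-1}$: the defining axiom of a ternary inverse loop prescribes $L_{x,y}^{-1}=L_{(yxy),y}$, while left Chasles together with idempotency directly yields $L_{x,y}\circ L_{y,x}=L_{x,x}=\id$, i.e., $L_{x,y}^{-1}=L_{y,x}$. In a left half-torsor both compositions equal $\id$, so uniqueness of two-sided inverses forces $L_{(yxy),y}=L_{y,x}$, and this identification is essentially the content of the identities listed at the end of the lemma. In the converse construction the same equality has to be verified to recognize the resulting structure as a genuine ternary inverse loop; once this is done, the remaining checks are routine manipulations of the inverse loop identities.
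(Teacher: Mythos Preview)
Your proposal is correct and follows essentially the same route as the paper: reduce $(xyz)$ to the homotope at a base point via the left Chasles relation to obtain $(xyz)=x(y^{-1}z)$, then express the ternary operators in binary form ($L_{x,y}=L_xL_y^{-1}$, $R_{x,y}=R_{y^{-1}x}$) and invoke the anti-automorphic inverse property to invert them. You are in fact slightly more thorough than the paper's own proof, which does not explicitly treat the $S$-identity and does not discuss the ternary inverse-loop check that you flag as your ``main obstacle''.
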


\begin{proof}
Assume given a left half-torsor. 
Then 
$(xyz)= (xe ( eyz))=(xe((eye)ez))$, by 2. and 3.
Conversely,
given a binary inverse loop, 
$(xyz) = L_x L_y^{-1} (z)$, so 
$L_{x,y} = L_x L_y^{-1}$,
whence $L_{x,x} = \id$, $L_{x,y} L_{y,z}= L_{x,z}$ and
$(xey)=e$ iff $L_x (y) = e$ iff $y = L_x^{-1} e = L_e L_x^{-1} e = (exe)$,
so $y=(exe)$ is the inverse of $x$ at $e$.

In a left half-torsor, $(L_{x,y})^{-1} (L_x L_y^{-1})^{-1} =
L_y L_x^{-1} = L_{y,x}$ and $R_{x,y}(z)=(zyx)=z (y^{-1}x) = R_{y^{-1}x,e}(x)$, whence,
using the automorphic inverse property,
$R_{x,y}^{-1} = 
R_{y^{-1}x,e}^{-1} =
R_{(y^{-1} x)^{-1},e} =
R_{x^{-1}y,e} =
R_{y,x}$.
Similarly for a right half-torsor.
\end{proof}

The last part of the proof shows that  apparently ``dual'' identities may have very different proofs.
The remarkable properties of inverse loops are due to the fact that left and right inverse 
coincide, given by the same inversion map $S_{y,y}$.


\begin{lemma}
A ternary product is a torsor if, and only if, it is both a left and a right half-torsor.
\end{lemma}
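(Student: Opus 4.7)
The plan is to reduce both directions to Lemma \ref{la:Chasles} and Theorem \ref{th:Torsors}, which together carry almost all of the content.

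First I would dispatch the easy direction: every torsor is simultaneously a left and a right half-torsor. Given (AT2) and (IP), Theorem \ref{th:Torsors} already tells me that each homotope at a fixed $y$ is a group, hence in particular an inverse loop with inverse $x^{-1}=(yxy)$, so the ternary inverse loop axioms hold automatically. Both Chasles identities then come from a single use of para-associativity followed by idempotency: para-associativity rewrites $(xy(yzu))$ as $((xyy)zu)$, which collapses to $(xzu)$ by (IP), and symmetrically $((uzy)yx)=(uz(yyx))=(uzx)$.

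For the converse I would fix an arbitrary base point $e\in M$ and consider the binary homotope $x\cdot z:=(xez)$ together with the inversion $y\mapsto y^{-1}:=(eye)$, which is an involutive bijection of $M$ coming from the middle multiplication $S_{e,e}$. Applying Lemma \ref{la:Chasles} to the left half-torsor structure gives the representation $(xyz)=x\cdot(y^{-1}\cdot z)$, and applying it to the right half-torsor structure gives $(xyz)=(x\cdot y^{-1})\cdot z$. The crucial point is that both statements of Lemma \ref{la:Chasles} use the same homotope at $e$, hence the same binary product and the same inversion. Setting $w:=y^{-1}$ and letting $y$ range over $M$, I equate the two expressions and obtain $x\cdot(w\cdot z)=(x\cdot w)\cdot z$ for all $x,w,z\in M$, so $\cdot$ is associative. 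An associative inverse loop with unit $e$ is a group, so the homotope $(M,\cdot,e)$ is a group and the ternary product is its torsor law $(xyz)=x\cdot y^{-1}\cdot z$. By Theorem \ref{th:Torsors} this law satisfies (IP) and (AT2), as required.

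The argument has no real computational obstacle; the only step requiring care is the identification, in the converse direction, of the binary product and inversion arising from the two half-torsor structures. Once one observes that both are defined by the single ternary product at the single chosen base point $e$, equating the two formulas of Lemma \ref{la:Chasles} is legitimate and delivers associativity in a single stroke, which is the entire content of the implication.
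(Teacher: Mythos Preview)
Your proof is correct and follows essentially the same route as the paper's: both directions reduce to Lemma~\ref{la:Chasles}, and for the converse you equate $x\cdot(y^{-1}\cdot z)=(x\cdot y^{-1})\cdot z$ and use bijectivity of inversion to substitute $w=y^{-1}$, obtaining associativity exactly as the paper does. You supply more detail in the forward direction (the paper dispatches it with ``clearly''), but the argument is otherwise identical.
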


\begin{proof}
Clearly, every torsor is both a left and a right half-torsor.
Conversely, if $(xyz)$ is both a left and and right half-torsor, then, for every fixed element $e$,
we have $(xyz) = x(y^{-1} z) = (xy^{-1}) z$, by the preceding lemma.
Inversion at $e$ is of order two, so it
is a bijection, so we may replace $y$ by
$y^{-1}$ to see that the product at $e$ is associative, and since it admits inverses, it is a group
law, and $(xyz) = xy^{-1}z$ is the corresponding torsor product.
\end{proof}

\subsection{Autotopies, triality, and binary Moufang loops.}

\begin{Definition}
An {\em autotopy} of an $n$-ary product
$\langle x_1 \ldots x_n \rangle$  is an $n+1$-tuple 
$(f_1,\ldots,f_n;f_{0})$ of bijections $f_i : M \to M$, 
 such that always
\begin{equation}
f_{0} ( \langle x_1 \ldots x_n \rangle ) = \langle f_1(x_1) \ldots f_n(x_n) \rangle .
\end{equation}
The autotopies form a group, the {\em autotopism group} of the $n$-ary product. 
(Cf.\ \cite{BeKi1}, where such property is rather called ``structurality''.) 
The {\em automorphism group} is the ``diagonal subgroup'', given by
$f_i = f_j$ for all $i,j$.)
\end{Definition}

\begin{proposition}\label{prop:triality}
Let $(M,\cdot,e)$ be a binary inverse loop, and 
$j:M \to M$, $x\mapsto j(x)=x^{-1}$ its inversion map. 
Then $f=(f_1,f_2;f_0)$ is an autotopy iff any of the following is an autotopy:
$$
(jf_0 j, f_1; jf_2j), \,
(f_2, jf_0 j; jf_1 j), \,
(jf_2j,jf_1j;jf_0j), \,
(jf_1j,f_0;f_1), \,
(f_0, jf_2j;f_1).
$$
In other words, the autotopism group carries  6 automorphisms forming a group
isomorphic to $\s_3$. The  automorphism $T$ of order $3$ with
$T(f) = (jf_0 j, f_1; jf_2j)$ is called the
{\em triality automorphism} of the autotopism group.
\end{proposition}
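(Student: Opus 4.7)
The plan is to exhibit an action of $\s_3$ on ordered triples of bijections of $M$ that preserves the autotopy property; the six listed tuples will then constitute the $\s_3$-orbit of $(f_1,f_2;f_0)$. The two key structural inputs are that in an inverse loop $j$ is an involutive anti-automorphism of the product (asserted just after (\ref{eqn:hexad1})), and that the hexad relations allow the equation $ab=c$ to be rewritten in five algebraically equivalent ways using $j$.

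I would introduce two transformations of triples,
\begin{equation*}
\sigma(f_1,f_2;f_0) := (jf_2j,\, jf_1j;\, jf_0j), \qquad
T(f_1,f_2;f_0) := (jf_0j,\, f_1;\, jf_2j),
\end{equation*}
and check by direct substitution that $\sigma^2=\id$, $T^3=\id$, and $\sigma T\sigma = T^{-1}$, so that $\langle T,\sigma\rangle \cong \s_3$.

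Next I would verify that both $\sigma$ and $T$ carry autotopies to autotopies. For $\sigma$: applying $j$ to $f_0(xy)=f_1(x)f_2(y)$ and using the anti-automorphism property yields $jf_0(xy)=jf_2(y)\cdot jf_1(x)$; substituting $x=j(Y)$, $y=j(X)$ and using $xy=j(XY)$ produces $(jf_0 j)(XY)=(jf_2 j)(X)\cdot (jf_1 j)(Y)$. For $T$: solving $f_0(xy)=f_1(x)f_2(y)$ for $f_2(y)$ via the hexad and then applying $j$ gives the identity $jf_2(y)=jf_0(xy)\cdot f_1(x)$, valid for all $x,y$; specializing $x=Y$, $y=j(XY)=jY\cdot jX$ so that $xy=Y\cdot jY\cdot jX=jX$ yields $(jf_2 j)(XY)=(jf_0 j)(X)\cdot f_1(Y)$, which is exactly the autotopy relation for $T(f_1,f_2;f_0)$.

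Finally, the $\s_3$-orbit of $(f_1,f_2;f_0)$ consists of the six triples obtained by applying $\id$, $T$, $T^2$, $\sigma$, $T\sigma$, $T^2\sigma$, and a straightforward computation identifies these with the original triple together with the five tuples listed in the statement. Since $\s_3$ acts by bijections on the set of all triples and both $T$ and $\sigma$ preserve autotopies, every orbit element is an autotopy if and only if $(f_1,f_2;f_0)$ is. The main obstacle is the verification for $T$, where the substitution $y=j(XY)$ must be chosen so that $Y\cdot jY\cdot jX$ collapses to $jX$; this is where the hexad and the anti-automorphism of $j$ combine in an essential way, and without either property the construction breaks down.
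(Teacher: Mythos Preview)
Your proof is correct and follows essentially the same approach as the paper: the paper observes that the hexad (\ref{eqn:hexad1}) gives six bijections of $M^3$ preserving the relation $R=\{(x,y,z):xy=z\}$, and that conjugating autotopies by these bijections yields the six automorphisms, which is exactly what your explicit verification with the generators $\sigma$ and $T$ unpacks. One small remark: in the $T$-step you write $Y\cdot jY\cdot jX$ without parentheses, but in a non-associative loop you should write $Y\cdot(jY\cdot jX)$ and invoke the left inverse property $L_Y L_{Y^{-1}}=\id$ (or, more directly, use the hexad relation $yz^{-1}=x^{-1}$ to get $Y\cdot(XY)^{-1}=X^{-1}$ in one stroke).
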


\begin{proof}
(Cf.\ \cite{CS}, Chapter 7.)
The ternary relation $R \subset M^3$ given by
$(x,y,z) \in R$ iff
 $xy=z$ is invariant under six maps that are combination of permutation of factors and
 applying $j$ on certain factors: 
 see (\ref{eqn:hexad1}), rewritten
 $(x,y,z) \in R$ iff $(jz),x,j(y)) \in R$, iff
 $
 (j(x),z,y) \in R$, etc. 
 Conjugation by these six maps defines six automorphisms of the autotopy group.
 \end{proof}

So far we cannot guarantee the existence of non-trivial autotopies.
The following axiom is going to change this:

\begin{Definition}
A binary loop $M$ is called a {\em Moufang loop} (``Moup'', cf.\ \cite{CS}) if, for all $a \in M$, the triple
$(L_a,R_a; B_a)$, where $B_a = L_a \circ R_a $,
 is an autotopy, that is, iff the following
{\em Moufang identity} holds:
$$
(ax)(ya) = a(( xy )a)
$$
\end{Definition}

Every Moufang loop is in particular alternative, flexible, and an inverse loop. 
The defining identity is also equivalent to each of the following:
\begin{enumerate} 
 \item[(M1)]
 $L_z \circ L_x \circ L_z = L_{(zx)z}$, i.e.,
 $z(x(zy)) = ((zx)z)y$,
 \item[(M2)]
 $R_z \circ R_x \circ R_z = R_{z(xz)}$, i.e. $ ((yz)x)z = y(z(xz))$
 \end{enumerate}
Since $j L_a j (x)= (ax^{-1})^{-1} = x a^{-1} = R_{a^{-1}}(x)$, the ``hexad'' of autotopies
obtained from $(L_a,R_a;B_a)$ via Prop.\ \ref{prop:triality} is (cf.\ 
\cite{CS}, p.87):   the following are autotopies
\begin{equation} \label{eqn:hexad2}
\begin{matrix}
(L_a,R_a;B_a), &  \,
(B_a,L_{a^{-1}};L_a), &  \,
(R_a,B_{a^{-1}} ; R_{a^{-1}}, \\
(L_{a^{-1}},R_{a^{-1}}; B_{a^{-1}}), &  \,
(B_{a^{-1}},L_a; L_{a^{-1}}),& \,
(R_{a^{-1}},B_a;R_a).
\end{matrix}
\end{equation}

\subsection{Left and Right ternary Moufang loops.}
Whereas a single ternary concept (torsor) corresponds to groups, there are two different
ternary versions (left and right) corresponding to binary Moufang loops.
Thus fixing the ternary concept involves a {\em choice}, between ``left'' and ``right''.

\begin{Definition}\label{def:TernaryMoufangLoop}
A {\em left  ternary  Moufang loop} is a left  half-torsor satisfying the identity
$$
((xba)(yba)(zab))= ((xyz)ab) ,
$$
meaning that the quadruple
$(R_{a,b},R_{b,a},R_{b,a};R_{a,b})$
is an autotopy. 
A {\em right ternary Moufang loop} is a right half-torsor satisfying the identity
$$
((abx)(bay)(baz))= (ab(xyz)),
$$
meaning that the quadruple
$(L_{a,b},L_{b,a},L_{b,a};L_{a,b})$
is an autotopy.
\end{Definition}

\begin{theorem}[Ternary and binary Moufang loops]\label{th:ternaryMouf}
$ $
\begin{enumerate}
\item
In a left (resp., right) ternary Moufang loop, every homotope at $e$ is a binary
Moufang loop.
\item
Every left ternary  Moufang loop  can be recovered from a homotope 
$xz= (xez)$ via  $(xyz) = x(y^{-1} z)$ in the sense of Lemma \ref{la:Chasles},
and every right ternary Moufang loop via $(xyz) = (xy^{-1})z$.
\item
In a left ternary Moufang loop, 
$(B_{a,b},B_{a,b},L_{a,b};L_{a,b})$ is an autotopy;

in a right ternary Moufang loop,
$(R_{a,b},B_{a,b},B_{a,b} ; R_{a,b})$ is an autotopy.
\item
Both left and right ternary Moufang loops are {\em reflection spaces} in the sense
of \cite{Lo67} (cf.\ Subsection  \ref{sec:reflectionspace}), when equipped with the binary product
$$
\mu(x,y):= S_{x,x}(y)=(xyx) = x(y^{-1} x) = (xy^{-1})x.
$$
In a left ternary Moufang loop, the set of right multiplications is stable under the
composition $(f,g) \mapsto fg^{-1}f$, and a right ternary Moufang, the set left multiplications
is stable under this composition.
\end{enumerate}
\end{theorem}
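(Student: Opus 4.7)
The plan is to establish the four items in order, leveraging the structural results already developed in the appendix.

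Part 2 is immediate from Lemma~\ref{la:Chasles}: by definition a left (resp.\ right) ternary Moufang loop is a left (resp.\ right) half-torsor, so the recovery formula $(xyz)=x(y^{-1}z)$ (resp.\ $(xy^{-1})z$), with $y^{-1}=(eye)$ in any homotope at $e$, applies directly.

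For Part 1, I would fix a base point $e$ and consider the binary product $xy:=(xey)$ on the homotope, which is already an inverse loop by the ternary inverse loop axioms. To verify the Moufang identity, specialize $b=e$ in the defining ternary autotopy $((xba)(yba)(zab))=((xyz)ab)$. Using the homotope readings $(xea)=xa$, $(zae)=za^{-1}$, and $(uev)=uv$, the identity reduces to $((xa)(ya)(za^{-1}))=(xyz)a^{-1}$; further substituting $y=e$ and $x\to va^{-1}$, and invoking right alternativity (already present in any inverse loop), yields $v(aza)=(vaz)a$, which is the binary Moufang identity~(M2). Since (M2) is equivalent in an inverse loop to the full Moufang identity, the homotope is a Moufang loop; the right case is symmetric.

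For Part 3, setting $B_{a,b}:=L_{a,b}\circ R_{a,b}$ (which at $b=e$ specializes to the binary bimultiplication $B_a$), the cleanest route is via Part~1: in the binary Moufang loop obtained at $e$, the full hexad~(\ref{eqn:hexad2}) is available, and the autotopy $(B_a,L_{a^{-1}};L_a)$ translated back to the ternary product via Part~2 gives precisely the claimed quadruple $(B_{a,b},B_{a,b},L_{a,b};L_{a,b})$ (concretely, at $b=e$ the identity $L_a(xy^{-1}z)=(axa)\cdot(aya)^{-1}\cdot(az)$ follows from the hexad); the right case uses the dual hexad entry $(R_a,B_{a^{-1}};R_{a^{-1}})$. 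A base-point-free derivation would combine the defining Moufang autotopy with the Chasles decomposition $L_{a,b}=L_{a,e}\circ L_{e,b}$ and the ternary analog of triality from Prop.~\ref{prop:triality}.

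For Part 4, (S1) $(xxx)=x$ is immediate from (IP); (S2) $\mu_x\circ\mu_x=\id$ follows from $S_{x,x}^{-1}=S_{x,x}$ in Lemma~\ref{la:Chasles}; and (S3), translated via Part~2 to a homotope at $e$ where $S_{x,x}(y)=xy^{-1}x=B_x(y^{-1})$, reduces to the standard Moufang loop identity $B_xB_wB_x=B_{xwx}$, a consequence of the Moufang autotopies. For the closure of right multiplications under $(f,g)\mapsto fg^{-1}f$ in the left case, passage to a homotope turns $R_{x,y}$ into the binary $R_{y^{-1}x}$, and the claim becomes $R_aR_bR_a=R_{aba}$, which is exactly (M2) --- already at hand from Part~1; the dual case is symmetric. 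The principal difficulty is Part~3: a clean intrinsic, base-point-free derivation requires careful bookkeeping of the interplay of the Moufang axiom with the Chasles relations and the ternary quadrality, which is why the homotope shortcut via Part~1 is the preferred approach.
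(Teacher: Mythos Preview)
Your approach is essentially the paper's: reduce to a fixed homotope (set $b=e$), identify the resulting binary identity with an entry of the hexad~(\ref{eqn:hexad2}), and invoke Lemma~\ref{la:Chasles} for the half-torsor recovery. One correction in Part~1: the parenthetical claim that ``right alternativity [is] already present in any inverse loop'' is false --- general inverse loops need not be alternative. Fortunately you do not need it: after your substitution $x\mapsto va^{-1}$ you obtain $v\bigl(a^{-1}(za^{-1})\bigr)=\bigl((va^{-1})z\bigr)a^{-1}$, and simply relabelling $a\mapsto a^{-1}$ (legitimate since the identity holds for all $a$) yields $v\bigl(a(za)\bigr)=\bigl((va)z\bigr)a$, which is (M2). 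This is exactly how the paper proceeds, except that it phrases the step as reading off the binary autotopy $(R_a,B_{a^{-1}};R_{a^{-1}})$ from the ternary one and recognising it in the hexad. For Part~4 the paper simply cites \cite{Lo67}, Satz~1.3, rather than verifying (S1)--(S3) directly as you do; your direct verification via $B_xB_wB_x=B_{xwx}$ is a fine alternative and indeed reduces to (M1)/(M2).
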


\begin{proof}
1. 
Assume $(xyz)$ is a left ternary Moufang loop, and let $e \in M$.
We show that the homotope at $e$ is a Moufang loop.
If $(f_1,f_2,f_3;f_4)$ is a ternary autotopy,
so $f_4((xyz))=(f_1(x)f_2(yf_3(z)) = f_1(x) ( f_2(y)^{-1} f_3(z))$,
we let $y=e$: for all $x,z \in M$,
$$
f_4 (xz) = f_1(x) \cdot f_2(e)^{-1} f_3(z) ,
$$
so we get a binary autotopy
$(f_1, L_{f_2(e)}^{-1} \circ f_3; f_4)$.
In particular, the ternary autotopy 
$(R_{a,e},R_{a,e},R_{e,a};R_{e,a})$ gives the binary autotopy $(R_a,B_{a^{-1}};R_{a^{-1}})$, since
$$
a^{-1}(xz) = R_{e,a} (xez)= 
( R_{a,e}x,R_{a,e}y,R_{e,a}z) =
xa \cdot ( a^{-1} \cdot za^{-1} ) = R_a (x) B_{a^{-1}}(z),
$$
which is one of the six versions of the Moufang identity in the ``hexad''
(\ref{eqn:hexad2}), so the binary loop is a Moufang loop.
By the same computation, for $b=e=y$ in a right ternary Moufang loop,
the defining identity implies
$B_a(x) \cdot L_{a^{-1}}(x) = L_a(xz)$, which is again eqivalent to the Moufang
identity, by the ``hexad''.

2. Conversely, assume $M$ is a Mouf with unit $e$, and define the corresponding
left half-torsor by
$(xyz):= x(y^{-1} z)$.
It has been shown in \cite{BeKi14}, Lemma 3.4., that every homotope is again a binary
Moufang loop, thus we may compute in the homotope at $b$; that is, we may assume 
that $b=e$.
Using the ``hexad'' (\ref{eqn:hexad2}) twice, along with $(uv)^{-1}=v^{-1}u^{-1}$:
\begin{align*}
((xba)(yba)(zab)) & =
xa \cdot ( (ya)^{-1} \cdot (za^{-1})) 
 =
xa \cdot (L_{a^{-1}} y^{-1} \cdot R_{a^{-1}}z)
\\
&  =
R_a(x) \cdot B_{a^{-1}}(y^{-1}z) = R_{a^{-1}}(x(y^{-1}z))
= R_{b,a}(xyz).
\end{align*}

3.
In a left ternary Moufang loop, $(xyz)=x(y^{-1}z)$; as above we may assume
$b=e=y$ and use the hexad (\ref{eqn:hexad2}),
\begin{align*}
(B_{a,b}x,B_{a,b}y,L_{a,b}z) & =
axa \cdot (a^{-2} \cdot a z) =
B_a(x) \cdot a^{-1} z \\
& = L_a (x z) = L_{a,b} (xyz) ,
\end{align*}
so $(B_{a,b},B_{a,b},L_{a,b};L_{a,b})$ is an autotopy.
Similarly in a right ternary Moufang loop.

4. Loos (\cite{Lo67}, Satz 1.3) shows that every Mouf with $\sigma_x(y) = xy^{-1}x$ is a reflection space (and, moreover, that left and right multiplications are automorphisms of the reflection space).

The identities (M1) and (M2) imply that, for a binary Moufang loop, both the sets of
left and of right multiplications are stable under $(f,g) \mapsto fg^{-1}f$.
For a left ternary Moufang loop, $R_{a,b} = R_{b^{-1} a}$, so set of right multiplications is the
same as for the binary homotope, whence the last claim. 
\end{proof}

\begin{remark}\label{rk:BeKi}
The ternary Moufang loops defined in  \cite{BeKi14}, Def.\ 3.5, are precisely the
{\em right} ternary Moufang loops considered here. In loc.cit., the defining identities are

(MT0) idempotency $(xxy)=y=(yxx)$

(MT1) $R_{x,y} R_{x,v}= R_{(xyx),v}$ : $((uvx)yx) = (uv(xyx))$ 

(MT2) $L_{x,y}L_{x,y} = L_{(xyx),y}$ : $ (xy(xyz)) = ( (xyx)yz)$

\nin
In loc.\ cit., Lemma 3.4., it is shown that this definition again leads to
$(xyz) = (xy^{-1})z$. Although (MT1), (MT2) involve less variables than the identities
used here, their  conceptual meaning is less clear, since the ``duality'' between
right and left ternary Moufang loops remains hidden.
\end{remark}




\section{The Moufang double of a group}\label{app:Moufang-double}

The disjoint union of two copies of a group $G$ carries a
rather natural structure of Moufang loop.
This result is du to Orin Chein (\cite{Ch}). (See also
\url{https://en.wikipedia.org/wiki/Moufang_loop#Examples}.)
We  generalize this result, with the aim to show that this construction is
essentially equivalent to the ACD-construction, broken up into its pieces
(KD0)-(KD4) (cf.\ Section \ref{sec:ACD}). 
We also give a ternary version of this construction
(``ABCD-construction''). 
Recall that a {\em central involution}  $\sharp$ of a group $G$  is an involution
such that $gg^\sharp = g^\sharp g$ is always in the center of the group, as is the
case for group inversion $g^\sharp = g^{-1}$. 
Another example is the adjugate involution of $\Gl(2,\K)$.

\begin{Definition} 
The {\em double} of a set $M$ is the union of two disjoint copies of $M$:
$$
D(M):= M \times \Z/2\Z = M_0 \sqcup M_1 = (M \times \{ 0 \} )\sqcup (M \times \{ 1 \}) .
$$
For $x \in M$ we let
$x_i := x \times \{ i \} \in M_i$.  Elements of $D(M)$ are called {\em even} if they belong to $M_0$
and {\em  odd} if they
belong to $M_1$.
Assume $\cdot$ is a  binary product on $M$.
We want to extend it to a {\em graded  binary product} $\bullet$ on $D(M)$, i.e., such that
 always 
 $$
 x_i \bullet y_j \in M_{i+j \mod 2} .
 $$
 In other terms, we have to describe $4$ products on $M$,
 its {\em components},  corresponding to the $4$
products  $M_i \times M_j \to M_{i+j \mod 2}$:
for $x,y \in M$, we have to define the values
$$
x_0 \bullet y_0 \in M_0, \quad
x_0 \bullet y_1 \in M_1, \quad
x_1 \bullet y_2 \in M_1, \quad
x_1 \bullet y_1 \in M_0.
$$
The {\em trivial graded extension} is $x_i \bullet y_j := (xy)_{i+j \mod 2}$, i.e., all $4$ products
are equal to the original product in $M$. 
\end{Definition}





\begin{theorem}\label{th:Moufang-double2} 
 Let $G$ be a group, together with a central   involution $\sharp$, and fix
two elements $\eps$ and $\mu$ of the the center $Z(G)$, fixed under $\sharp$, and
with $\eps^2 = e$.
Then we define a graded  product $\bullet$ on $D(G)$, and a new map $\sharp : D(M) \to D(M)$, by:
\ssk
\begin{center}
\begin{tabular}{|*{10}{c|}}
\hline
 parity of $x$ & parity of $y$ &  $x \bullet y$
 \\
 \hline
 $0$ & $0$ &  $(xy)_0$
 \\
 $0$ & $1$ & $(yx)_1$
 \\
$ 1$ & $0$ & $(x y^\sharp)_1$
 \\
$ 1$ & $1$ & $\mu \eps (y^\sharp x)_0$
 \\
 \hline
 \end{tabular}
 $\qquad$
 \begin{tabular}{|*{10}{c|}}
\hline
 parity of $x$ &  $x^\sharp$
 \\
 \hline
 0 & $(x^\sharp)_0$
 \\
 1 & $ \eps x_1$
 \\
 \hline
 \end{tabular}
 \end{center} 
 \ssk
Then the following holds:
\begin{enumerate}
\item
If $G$ is commutative and $\sharp$ trivial, then $D(G)$ is a commutative group.
\item
If $G$ is commutative but $\sharp$ non-trivial,
then  $D(G)$ is a non-commutative group.
\item
If $G$ is not commutative, 
then $(D(G),\bullet)$ is a non-associative
{\em Moufang loop}.
\end{enumerate}
In any case, the new
 $\sharp:D(G) \to D(G)$ is again a central  involution, and it is related to the inversion map
 via
 $x^{-1} = (x^\sharp x)^{-1} x^\sharp$, where $ (x^\sharp x)^{-1} $ is the
(group) inverse in the center of $D(G)$. 
\end{theorem}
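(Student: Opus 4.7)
My plan is a systematic case analysis on parities, exploiting the evident match between the formulas for $\bullet$ and the Albert-Cayley-Dickson identities (KD0)-(KD4) of Section~\ref{sec:ACD}: under the identification $x_1 \leftrightarrow x \cdot m$ the product $\bullet$ is exactly the ACD product with parameter $\mu_{\mathrm{paper}} = \mu\eps$. The three items of the theorem split naturally: items~(1) and~(2) require associativity of $\bullet$, while item~(3) requires the Moufang identity $(a\bullet x)\bullet(y\bullet a) = a\bullet((x\bullet y)\bullet a)$.

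First I would establish the structural properties common to all cases. That $\sharp$ is an involution on $D(G)$ is immediate in parity~$0$ and uses $\eps^2 = e$ in parity~$1$. That it is an anti-automorphism of $\bullet$ is checked in the four parity configurations $(i,j) \in \{0,1\}^2$; the only non-trivial case is $(1,1)$, which uses $\mu^\sharp = \mu$, $\eps^\sharp = \eps$ together with centrality of $\mu,\eps$. Then $e_0$ is a two-sided unit (read off all four formulas), and solving $x_1\bullet y_1 = e_0$ yields $x_1^{-1} = (\mu^{-1}\eps(x^\sharp)^{-1})_1$. For centrality of $\sharp$ one computes $x_0\bullet x_0^\sharp = (xx^\sharp)_0$ and $x_1\bullet x_1^\sharp = \mu(x^\sharp x)_0$, both lying in $M_0$ with underlying element of $Z(G)$; a short parity check on the four product formulas shows that every $(z)_0$ with $z\in Z(G)$ commutes and associates with every element of $D(G)$. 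The stated formula $x^{-1} = (x^\sharp x)^{-1}\, x^\sharp$ then follows by direct verification, using the identity $(x^\sharp x)^{-1}x = (x^\sharp)^{-1}$ in $G$.

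For items~(1) and~(2) I would verify associativity $(x\bullet y)\bullet z = x\bullet(y\bullet z)$ in each of the eight parity configurations of $(x,y,z)$. In the commutative case each side reduces to an expression of the form $\mu^a\eps^b(\text{product of the letters and their }\sharp\text{-images})_{i+j+k \bmod 2}$, and commutativity of $G$ makes both sides agree regardless of internal ordering. The resulting group is abelian iff $\sharp$ is trivial on $G$, which one reads off by comparing $x_0\bullet y_1 = (yx)_1$ with $y_1\bullet x_0 = (yx^\sharp)_1$.

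The main obstacle is item~(3): the Moufang identity when $G$ is non-commutative. I would verify it directly on the eight parity configurations of $(a,x,y)$, using only associativity in $G$, the anti-automorphism property of $\sharp$, centrality and $\sharp$-invariance of $\mu$ and $\eps$, and the hypothesis $aa^\sharp \in Z(G)$. As a sample, in the $(1,1,1)$-case both sides reduce to $\mu^2(aa^\sharp)(x^\sharp y)_0$ by sliding the central element $aa^\sharp$ to the front. Each of the seven remaining cases follows the same pattern and is in fact an instance of McCrimmon's verification that the full ACD-double is alternative; on the sub-loop $D(G)$ the ``scalar involution'' hypothesis of the KD Inheritance Theorem (which involves the trace $x+x^\sharp$) never enters, so the weaker group-theoretic hypotheses on $G$ suffice. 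Non-associativity of $\bullet$ already appears in the $(0,1,0)$-case, where $(x_0\bullet y_1)\bullet z_0 = (yxz^\sharp)_1$ while $x_0\bullet(y_1\bullet z_0) = (yz^\sharp x)_1$, and these disagree as soon as $x$ fails to commute with $z^\sharp$.
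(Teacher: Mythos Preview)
Your proof is correct and follows essentially the same approach as the paper's: a systematic case analysis over the $2^3=8$ parity configurations to verify a Moufang identity, together with direct checks of the structural properties (unit, inverses, $\sharp$ an anti-automorphism). The only minor differences are that the paper chooses to verify the equivalent identity (M1), $z\bullet(x\bullet(z\bullet y)) = ((z\bullet x)\bullet z)\bullet y$, rather than the ``defining'' Moufang identity you use, and that for items~(1)--(2) the paper invokes the semi-direct product / central extension viewpoint instead of the direct associativity check you outline; your route is slightly more explicit but amounts to the same verification.
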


\begin{proof}
(1) and (2):
If $G$ is commutative, the inversion map $g \mapsto g^{-1}$ is an automorphism of order
$2$, so the semi-direct product $G \rtimes C_2$ is well-defined; the elements
$\eps,\mu$ introduce a cocycle term, so we have a central extension of $G$. 
Conversely,
assume $D(G)$ is associative and let $f,g,h \in G$.
The equality $(f_0g_0) h_1  = f_0 ( g_0 h_1)$ gives
$( h (fg))_1 = ((hg)f)_1 , \mbox{ whence }  fg = gf \mbox{ in } G$,
whence $G$ is commutative.

To prove (3)  we show that the Moufang identity (M1) holds:  let $x,y,z \in G$, and check (M1) for the
$8$ possible cases taking $x_i,y_j,z_k$ with  $i,j,k=0$ or $1$. 
 We compute the products directly 
by applying definitions.
For better readability, we omit the index
$\ell = i+j+k \mod(2)$ in the last two columns. In Lines 3 -- 6 we use that $zz^\sharp$ belongs to the center.
We find that the last two columns always coincide:

\begin{center}
\begin{tabular}{|*{10}{c|}}
\hline
parity of $z_k$  & parity of $x_i$ & parity of $y_j$ &  $z(x(zy))$ & $((zx)z)y$
\\
\hline
0 & 0 & 0  & $zxzy$ & $zxzy$
\\
0 & 0 & 1  & $yzxz$ & $yzxz$
\\
0 & 1 & 0  & $zz^\sharp \, xy^{\sharp}$ & $zz^\sharp \, xy^{\sharp}$
\\
0 & 1 & 1 & $\mu \eps \,  zz^\sharp \, y^\sharp  x$ & $\mu \eps \, zz^\sharp \, y^{\sharp }x$ 
\\
1 & 0 & 0 &  $\mu \eps \, zz^\sharp \, x^{\sharp}y$ & $\mu \eps \, zz^\sharp \, x^{\sharp} y$
\\
1 & 0 & 1  & $\mu \eps \, zz^\sharp \, yx^{\sharp}$ & $\mu \eps \, zz^\sharp \, y x^{\sharp}$ 
\\
1 & 1 & 0  & $\mu \eps \, zx^{\sharp} z y^{\sharp}$ & $\mu \eps \, zx^{\sharp} z y^{\sharp}$
\\
1 & 1 & 1 & $\mu^2 \,  y^{\sharp}zx^{\sharp} z$ & $\mu^2 \,  y^{\sharp}zx^{\sharp} z$
\\
\hline
\end{tabular}
\end{center}
Clearly, $e$ is a unit for the product, and $D(G)$ is a quasigroup
(since left and right multiplications and inversion are bijections in $G$).
Explicitly, the inverse of $x_0$ is its inverse $(x_0)^{-1}$ in $G_0$, and the inverse of
$x_1$ is $(\eps (xx^\sharp)^{-1} x )_1$. Thus $D(G)$ is a Moufang loop.
The new map $\sharp:D(G) \to D(G)$ is of order $2$, and it is an anti-automorphism, as is seen
by distinguishing $4$ cases to compute
$(xy)^\sharp$ and $y^\sharp x^\sharp$.
\end{proof}

\begin{Definition}\label{def:dih(G)}
Let $G$ be a group and $Z  \in G$ be a central element of order $2$ (if $G$ is a matrix group containing
$- \id$, we will take $Z = - \id$).
We define 
\begin{enumerate}
\item
the {\em dihedral loop of $G$} to be
$Dih(G):= D(G)$, by taking on $G$ the involution
 $g^\sharp = g^{-1}$ and $\mu = e$ and $\eps = e $ (neutral element),
\item
the {\em dicyclic loop of $(G,Z)$} to be
$Dic(G,Z):= D(G)$ by taking on $G$ the involution $g^\sharp = g^{-1}$ and $\mu= e$ and $\eps = Z$.
\end{enumerate}
Note that, when $G$ is commutative, then
$Dih(G)$ is indeed the
\href{https://en.wikipedia.org/wiki/Generalized_dihedral_group}{generalized dihedral group of $G$},
and  $Dic(G,Z)$  the 
\href{https://en.wikipedia.org/wiki/Dicyclic_group#Generalizations}{generalized dicyclic group of $(G,Z)$}
(Remark \ref{rk:dicyclic}).
\end{Definition} 

\begin{remark}
The dicyclic extension $Dic(\GL(n,\K),-I)$ should be particularly interesting for the purposes
of linear algebra and group theory. 
\end{remark}

\begin{theorem}\label{th:Moufang-ternary}
Let assumptions and notations be as in the preceding theorem. 
Then the ``new''  ternary product 
$\langle abc \rangle := a \bullet (b^\sharp \bullet c)$ on $D(G)$ is given by the following table, in terms
of the ``old''  product $\langle xyz\rangle = x y^\sharp  z$  living on $G$: 
\ssk
\begin{center}
\begin{tabular}{|*{10}{c|}}
\hline
$i$  &  $j$  &  $k$ &  $\langle x_i y_j z_k \rangle = x_i \bullet (y_j^\sharp \bullet z_k)$
\\
\hline 
$0$ & $0$ & $0$  & $\langle xyz\rangle_0$
\\
\hline
$0$ & $0$ & $1$  & $\langle zyx \rangle_1$
\\
$0$ & $1$ & $0$  & $ \eps \cdot  \langle yzx\rangle_1 $
\\
$1$ & $0$ & $0$  & $\langle xzy \rangle_1$
\\
\hline
$1$ & $1$ & $0$  & $ \mu \cdot \langle zyx\rangle_0 $
\\
$1$ & $0$ & $1$  & $ \eps \mu \cdot  \langle yzx\rangle_0$
\\
$0$ & $1$ & $1$  &  $ \mu \cdot \langle xzy\rangle_0$
\\
\hline
$1$ & $1$ & $1$  & $\mu \cdot  \langle xyz\rangle_0 $
\\
\hline
\end{tabular}
\end{center}
In particular, when $\eps = 1 =\mu$, then the ternary Moufang structure
on $D(G)$ is given by permutations, depending on the tri-degree of
$(x,y,z)$, of the order in the original torsor law $(xyz)$.
\end{theorem}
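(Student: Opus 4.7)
The plan is to verify the table by direct case-by-case computation for each of the eight tri-degrees $(i,j,k) \in \{0,1\}^3$. For each such triple, I evaluate $\langle x_i\, y_j\, z_k\rangle := x_i \bullet (y_j^\sharp \bullet z_k)$ step by step and reduce it to the old ternary product $\langle xyz\rangle = xy^\sharp z$ on $G$, using only the product and involution tables from Theorem \ref{th:Moufang-double2} together with the standing hypotheses that $\eps$ and $\mu$ lie in the centre $Z(G)$, are fixed under $\sharp$, and satisfy $\eps^2 = e$.

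Within each case, the calculation follows a uniform three-step recipe. First, compute $y_j^\sharp$: it is $(y^\sharp)_0$ when $j=0$ and $(\eps y)_1$ when $j=1$. Second, evaluate the middle product $y_j^\sharp \bullet z_k$ by reading off the appropriate row of the $\bullet$-table; this yields an element of $M_{j+k \bmod 2}$, acquiring a scalar prefix $\mu\eps$ exactly when the collision is $M_1 \bullet M_1$. Third, apply the $\bullet$-table once more to the outer product $x_i \bullet (y_j^\sharp \bullet z_k)$. Throughout, I pull central scalars to the front and, at the end, apply $\eps^2 = e$ together with $\mu^\sharp = \mu$ and $\eps^\sharp = \eps$ to collapse the total scalar prefix to one of $1$, $\eps$, $\mu$, $\eps\mu$, and to recognise the residual group-element as the announced permutation of $\langle xyz\rangle$.

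As a sample, the case $(i,j,k)=(1,1,0)$ runs $y_1^\sharp = (\eps y)_1$, then $(\eps y)_1 \bullet z_0 = \eps(yz^\sharp)_1$, and finally
\[
x_1 \bullet \eps(yz^\sharp)_1 \;=\; \mu\eps\bigl((\eps yz^\sharp)^\sharp \cdot x\bigr)_0 \;=\; \mu\eps^{2}(zy^\sharp x)_0 \;=\; \mu\,\langle zyx\rangle_0 ,
\]
matching the corresponding entry. The remaining seven cases are entirely analogous: the permutation of $(x,y,z)$ is dictated by how many $\sharp$-operations strike odd factors (each such $\sharp$ reverses a $G$-product and injects an $\eps$), while the scalar prefix is dictated by the number of $M_1 \bullet M_1$ collisions encountered during the two-step reduction.

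The main obstacle is purely bookkeeping: one has to track the interplay between the $\eps$-factors produced by the involution on $M_1$, the $\mu\eps$-factors produced by $M_1 \bullet M_1$ products, the order reversal imposed by $\sharp$ on products in $G$, and the cancellations via $\eps^2 = e$ (and via $\mu^\sharp = \mu$, $\eps^\sharp = \eps$ when $\sharp$ encounters a central scalar). Fixing the systematic order of simplification just described -- compute $y_j^\sharp$, then middle product, then outer product, then push central scalars left, then simplify -- funnels each case onto exactly one entry of the table. The concluding remark -- that for $\eps = \mu = 1$ the ternary structure on $D(G)$ reduces to permutations of the original $\langle xyz\rangle$ -- is then read off immediately by setting all scalar prefixes to $1$.
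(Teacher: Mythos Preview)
Your proposal is correct and follows essentially the same approach as the paper, whose proof consists of the single phrase ``Direct computation.'' Your write-up is in fact more detailed than the paper's own proof, spelling out the three-step recipe and working through a representative case; nothing further is needed.
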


\begin{proof}
Direct computation.
\end{proof}

\begin{remark}
The warning from Remark \ref{rk:warning} applies here, too:
our definition of the Moufang double follows McCrimmon's conventions.
By following Faulkner's convention, we get another, ``dual'', definition, $\bullet'$
which is adapted to the right ternary product
$(a \bullet' b^\sharp) \bullet' c$.
\msk
\begin{center}
\begin{tabular}{|*{10}{c|}}
\hline
 parity of $x$ & parity of $y$ &  $x \bullet' y$
 \\
 \hline
 0 & 0 &  $(xy)_0$
 \\
 0 & 1 & $(x^\sharp  y)_1$ 
 \\
 1 & 0 & $(yx)_1$
 \\
 1 & 1 & $\mu \eps (yx^\sharp)_0$
 \\
 \hline
 \end{tabular}
 $\qquad$
 \begin{tabular}{|*{10}{c|}}
\hline
 parity of $x$ &  $x^\sharp$
 \\
 \hline
 0 & $(x^\sharp)_0$
 \\
 1 & $ \eps x_1$
 \\
 \hline
 \end{tabular}
 \end{center} 
 \ssk
 Summing up, one should distinguish a ``left'' from a ``right Moufang double''.
\end{remark}


\begin{example}
Certain sequences $G, D(G),D(D(G)),D(D(D(G)))$ are analogs of the sequence $\mathbb{R,C,H,O}$.
For instance:
\begin{enumerate}
\item
$G = C_2 = \{ \pm 1 \}$ and $\eps = - 1$, and $\mu=1$.
Then $D(G) = C_4$, $D(D(G)) =  {\mathbf Q}$ (quaternion group of order $8$), 
$D^3(G)$ is $\mathbf O$, the octonion loop of order $16$.
\item
$G= C_2$, $\eps = \mu = -1$, then
$D(G) = C_2 \times C_2$,
$D(D(G))= D_4$ (dihedral group), and
$D^3(G)$ is the  split octonion loop of order $16$. 
\item
$G = C_3$ with $g^\sharp = g^{-1}$ and
$\eps = \mu = 1$, then $D(G) = D_3$, $D(D(G)) $ is smallest possible non-associative
 Moup, of cardinal $12$.
 \item
 $G = C_6$ with $g^\sharp = g^{-1}$ and $\mu=1$, $\eps = -1$ (element of order $2$ in $C_6$),
 $D(G) = Dic_3$, $D(D(G))$ a Moufang loop of cardinal $24$ 
 (cf. Example \ref{ex:dicloop}).
\end{enumerate}
\end{example}

\end{document}